\documentclass[10pt,article,reqno]{amsart}
\usepackage{amssymb}
\usepackage{enumitem}
\usepackage{amsfonts}
\usepackage{eucal}
\usepackage{txfonts}
\usepackage{amsmath}
\usepackage{comment}
\usepackage{hyperref}
\usepackage{extarrows}
\usepackage{mathrsfs}
\usepackage{bm}

\vfuzz2pt 

\newtheorem{thm}{Theorem}[section]

\newtheorem{lem}[thm]{Lemma}

\newtheorem{exam}[thm]{Example}
\theoremstyle{definition}
\newtheorem{defn}[thm]{Definition}
\theoremstyle{remark}
\newtheorem{rem}[thm]{Remark}
\numberwithin{equation}{section}


\makeatletter
\@namedef{subjclassname@2020}{\textup{2020} Mathematics Subject Classification}
\makeatother

\begin{document}
	\title[]
	{On the rigidity of manifolds with respect to the Gagliardo-Nirenberg inequalities
	}
	
	\author{Liang Cheng}

	
	\subjclass[2020]{Primary 53C24; Secondary 	53E20 .}

	\keywords{  scalar curvature; rigidity theorems; Gagliardo-Nirenberg inequalities,  	unweighted Yamabe-type constants}
	
	\thanks{Liang Cheng's  Research partially supported by
		Natural Science Foundation of China 12171180
	}
	
	\address{School of Mathematics and Statistics, and Key Laboratory of Nonlinear Analysis $\&$ Applications (Ministry of Education), Central  China Normal University, Wuhan, 430079, P.R.China}
	
	\email{chengliang@ccnu.edu.cn }

	\begin{abstract}
		In this paper, we investigate local rigidity properties related to Gagliardo-Nirenberg constants and unweighted Yamabe-type constants. Let $V$ be an open bounded subset of an $n$-dimensional Riemannian manifold $(M,g)$ whose Gagliardo-Nirenberg constant satisfies
		\[
		\mathbb{G}_{\alpha}^{\pm}(V,g) \geq \mathbb{G}_{\alpha}^{\pm}(\mathbb{R}^n,g_{\mathbb{R}^n}),
		\]
		where $(\mathbb{R}^n,g_{\mathbb{R}^n})$ denotes the $n$-dimensional Euclidean space with its standard metric.
		We show that
		for $\alpha \in (0,1) \cup \left(1,\frac{n+6}{n+2}\right)$ when $n \leq 6$ or $\alpha \in (0,1) \cup \left(1,\frac{n}{n-2}\right]$ when $n \geq 7$, if the first eigenvalue of the Ricci tensor satisfies
		\[
		\int_V \lambda_1(\operatorname{Rc}) \, d\mu_g \geq 0,
		\]
		then $V$ must be flat.
		When $\alpha$ belongs to a specific subinterval around $1$ within the above range, 
		$\mathbb{G}_{\alpha}^{\pm}(V,g) \geq \mathbb{G}_{\alpha}^{\pm}(\mathbb{R}^n,g_{\mathbb{R}^n})$ and the weaker curvature condition of the scalar curvature
		\[
		\int_{V} \operatorname{Sc} \, d\mu_g \geq 0
		\]
		already imply that $V$ is flat.
		Moreover, we prove that for $\alpha$ sufficiently close to 1, the condition 
		\[
		\mathbb{Y}_{\alpha}^{\pm}(V,g) \geq \mathbb{G}_{\alpha}^{\pm}(\mathbb{R}^n,g_{\mathbb{R}^n})
		\] 
		on the unweighted Yamabe-type constants guarantees the flatness of $V$.
	\end{abstract}
	\maketitle	
	
	\section{Introduction and Main results}
	
	\subsection{Introduction}
	It has been established that if a complete noncompact Riemannian manifold $M$ has nonnegative Ricci curvature and the optimal constants of many Sobolev-type inequalities are not less than those of Euclidean space, then $M$ must be flat. These Sobolev-type inequalities include the Sobolev inequality \cite{LSobolev}, the log-Sobolev inequality \cite{BCL, Ni2, Ni3, BKT}, the Gagliardo-Nirenberg inequality \cite{X2005, BKT}, the Caffarelli-Kohn-Nirenberg inequality \cite{DX, K2}, etc.
	The following natural problems arise:
	\begin{enumerate}
		\item Do these rigidity results hold locally? That is, if we only assume that these optimal constants of some open subset $V \subset M$  are not less than those of Euclidean space in the aforementioned rigidity results, does this imply flatness for $V$?

		\item Can the nonnegative Ricci curvature condition in the aforementioned rigidity results be weakened to nonnegative scalar curvature (or something weaker)?
	\end{enumerate}

	In this paper, we first investigate these problems concerning the Gagliardo-Nirenberg constants. To present our results, we begin by recalling the definition of the Gagliardo-Nirenberg constants.
	
	\begin{defn}
		Let $(M^n,g)$ be an $n$-dimensional Riemannian manifold, and $V \subset M^n$. Denote $\|u\|_{L^p(V)} := \left(\int_V |u|^p \, \mathrm{d}\mu_g\right)^{\frac{1}{p}}$, where $\mathrm{d}\mu_g$ is the volume form of $V$ induced by $g$. The Gagliardo-Nirenberg constants of $V$ with respect to $g$ are defined as:
		$$
		\mathbb{G}_{\alpha}^-(V,g) := \inf_{u \in W_0^{1,2}(V)} \frac{\|\nabla u\|^2_{L^2(V)} \|u\|_{L^{2\alpha}(V)}^{\frac{2(1-\gamma)}{\gamma}}}{\|u\|_{L^{\alpha+1}(V)}^{\frac{2}{\gamma}}}, \quad \text{for } 0 < \alpha < 1,
		$$
		and
		$$
		\mathbb{G}_{\alpha}^+(V,g) := \inf_{u \in W_0^{1,2}(V)} \frac{\|\nabla u\|^2_{L^2(V)} \|u\|_{L^{\alpha+1}(V)}^{\frac{2(1-\theta)}{\theta}}}{\|u\|_{L^{2\alpha}(V)}^{\frac{2}{\theta}}}, \quad \text{for } 1 < \alpha \leq \frac{n}{n-2},
		$$
		where  
		\begin{equation}\label{def:gamma}
			\gamma:=\frac{2^*(1-\alpha)}{\left(2^*-2\alpha \right)(\alpha +1)}, \text{i.e. $\gamma$ is determined by }\frac{1}{\alpha+1} = \frac{\gamma}{2^*} + \frac{1-\gamma}{2\alpha},
		\end{equation}
		here $2^* := \frac{2n}{n-2}$, and 
		\begin{equation}\label{def:theta}
			\theta:=\frac{2^*(\alpha-1)}{2\alpha \left(2^*-\alpha -1\right)}, \text{i.e. $\theta$ is determined by  }	\frac{1}{2\alpha} = \frac{\theta}{2^*} + \frac{1-\theta}{\alpha+1}.
		\end{equation}
	\end{defn}
	\begin{rem}
		When $V=\mathbb{R}^n$ is $n$-dimensional Euclidean space, Del Pino and Dolbeault \cite{DD} proved 
		\begin{equation}\label{def_optimal_const}
			\begin{aligned}
				\mathbb{G}_{\alpha}^-(\mathbb{R}^n,g_{\mathbb{R}^n})&=\mathcal{N}_{\alpha, n}:=\left(\frac{1-\alpha}{2}\right)^\gamma \frac{\left(\frac{2}{n}\right)^{\frac{\gamma}{2}+\frac{\gamma}{n}}\left(\frac{1+\alpha}{1-\alpha}+\frac{n}{2}\right)^{\frac{\gamma}{2}-\frac{1}{\alpha+1}}\left(\frac{1+\alpha}{1-\alpha}\right)^{\frac{1}{\alpha+1}}}{\left(\omega_n \mathrm{~B}\left(\frac{1+\alpha}{1-\alpha}, \frac{n}{2}\right)\right)^{\frac{\gamma}{n}}},\\
				\mathbb{G}_{\alpha}^+(\mathbb{R}^n,g_{\mathbb{R}^n})&=\mathcal{G}_{\alpha, n}:=\left(\frac{\alpha-1}{2}\right)^\theta \frac{\left(\frac{2}{n}\right)^{\frac{\theta}{2}+\frac{\theta}{n}}\left(\frac{\alpha+1}{\alpha-1}-\frac{n}{2}\right)^{\frac{1}{2\alpha }}\left(\frac{\alpha+1}{\alpha-1}\right)^{\frac{\theta}{2}-\frac{1}{2\alpha }}}{\left(\omega_n \mathrm{~B}\left(\frac{\alpha+1}{\alpha-1}-\frac{n}{2}, \frac{n}{2}\right)\right)^{\frac{\theta}{n}}},
			\end{aligned}
		\end{equation}
		where $\mathrm{B}(\cdot, \cdot)$ is the Euler beta-function.
		Moreover, they proved
		$\mathbb{G}_{\alpha}^-(\mathbb{R}^n,g_{\mathbb{R}^n})$ and  $\mathbb{G}_{\alpha}^+(\mathbb{R}^n,g_{\mathbb{R}^n})$ both can be achieved by the family of functions 
		$$
		h_{\alpha}^\lambda(x)=\left(\lambda+(\alpha-1)\|x\|^{2}\right)_{+}^{\frac{1}{1-\alpha}}, \quad x \in \mathbb{R}^n,
		$$
		where  $r_{+}=\max \{0, r\}$ for $r \in \mathbb{R}$. 
	\end{rem}
	\begin{rem}
		The borderline case $\alpha=\frac{n}{n-2}$ (thus $\theta=1$) of $\mathbb{G}_{\alpha}^+(V,g)$ reduces to the optimal constant of the Sobolev inequality of $V$. Furthermore,   whenever $\alpha \rightarrow 1$, both $\mathbb{G}_{\alpha}^+(V,g)$ and $\mathbb{G}_{\alpha}^-(V,g)$ degenerate to the optimal constant of logarithmic Sobolev inequality of $V$: $ \inf\limits_{\substack{u \in W_0^{1,2}(V),\int_{V} u^2 d\mu = 1}}\frac{\frac{n}{2} \log\left(\frac{2}{n\pi e} \int_{V}|\nabla u|^2\,d\mu_g\right)}{\int_{V} u^2 \log u^2\,d\mu_g}$.  When $\alpha \rightarrow 0$,
		$\mathbb{G}_{\alpha}^-(V,g)$ reduces to the optimal constant of the Faber-Krahn type inequality of $V$: 
		$
		\inf\limits_{\substack{u \in W_0^{1,2}(V)}} \frac{\|\nabla u\|_{L^2(V)}|\operatorname{supp}(u)|^{\frac{1}{2}}}{\|u\|_{L^1}}.
		$
	\end{rem}
	
	The Gagliardo-Nirenberg constants are related to
	the  weighted
	Yamabe constants considered by
	Jeffrey S. Case \cite{Jcase,Jcase2}, which constitute a one-parameter family and interpolate between the
	Yamabe constant and Perelman’s $\nu$-entropy when the parameter $m$ is zero and inﬁnity,
	respectively. 
	Let $(M^n, g, e^{-\phi} \text{dvol}, m)$ be the  quadruples consisting of a $n$-dimensional Riemannian manifold $(M^n, g)$, a weighted volume measure $e^{-\phi} \text{dvol}$, and a dimensional parameter $m \in [0, \infty]$, where the fundamental geometric quantity is the weighted scalar curvature $R_\phi^m = \operatorname{Sc}(g) + 2\Delta \phi - \frac{m+1}{m} |\nabla \phi|^2$. The weighted Yamabe constants, which serve as curved analogues for the Gagliardo-Nirenberg constants, are defined as the infimum of the quotient
	\begin{equation}\label{case_def}
		\mathcal{Q}(w) = \frac{\left(\int_M |\nabla w|^2 + \frac{m+n-2}{4(m+n-1)} R_\phi^m w^2\right) \left(\int_M |w|^{\frac{2(m+n-1)}{m+n-2}} e^{\phi/m}\right)^{\frac{2m}{n}}}{\left(\int_M |w|^{\frac{2(m+n)}{m+n-2}}\right)^{\frac{2m+n-2}{n}}},
	\end{equation}
	where all integrals are computed with respect to $e^{-\phi}\text{dvol}$.
	The weighted Yamabe quotient is conformally invariant in the sense that if
	$
	\left(M^n, \hat{g}, e^{-\hat{\phi}} \operatorname{dvol}_{\hat{g}}, m\right) = \left(M^n, e^{\frac{2 \sigma}{m+n-2}} g, e^{\frac{(m+n) \sigma}{m+n-2}} e^{-\phi} \operatorname{dvol}_g, m\right),
	$
	then $\hat{Q}(w)=Q(we^{\frac{\sigma}{2}})$.

	For the second purpose of this paper, we study the rigidity properties associated with the unweighted case (i.e., $\phi \equiv \mathrm{constant}$ in \eqref{case_def}) for weighted Yamabe constants. Specifically, we use the
	following definitions:

	\begin{defn}
		Let $(M^n,g)$ be an $n$-dimensional manifold  and $V\subset M^n$. 	Denote $\|u\|_{L^p(V)} := \left(\int_V |u|^p \, \mathrm{d}\mu_g\right)^{\frac{1}{p}}$, where $\mathrm{d}\mu_g$ is the volume form of $V$ induced by $g$. 
		The unweighted Yamabe-type constants of $V$ with respect to metric $g$ are defined as:	
		$$
		\mathbb{Y}_{\alpha}^-(V,g):=\inf_{u \in  W_0^{1,2}(V)}\frac{\left(\|\nabla u\|_{L^2(V)}^2+\frac{1}{2(1+\alpha)}  \int_V \operatorname{Sc}\cdot u^2d\mu_g	\right)\cdot \|u\|_{L^{2\alpha}(V)}^{\frac{2(1-\gamma)}{\gamma}}}{\|u\|_{L^{\alpha+1}(V)}^{\frac{2}{\gamma}}  } \text{ for $0<\alpha<1$,}
		$$		
		and
		$$
		\mathbb{Y}_{\alpha}^+(V,g):=\inf_{u \in  W_0^{1,2}(V)}\frac{\left(\|\nabla u\|_{L^2(V)}^2+\frac{1}{2(1+\alpha)}  \int_V \operatorname{Sc}\cdot u^2d\mu_g	\right)\cdot \|u\|_{L^{\alpha+1}(V)}^{\frac{2(1-\theta)}{\theta}}}{\|u\|^{\frac{2}{\theta}}_{L^{2\alpha}(V)}  } \text{ for  $1<\alpha\le \frac{n}{n-2}$,}
		$$
		where $\operatorname{Sc}$ denotes the scalar curvature with respect to $g$.
	\end{defn}
	
	\begin{rem}
		Take $\alpha=\frac{m+n}{m+n-2}$ and when $\phi\equiv constant$, the quantity $\mathbb{Y}_{\alpha}^+(V,g)$ coincides with the infimum of the quotient in \eqref{case_def}.    
	\end{rem}	
	
	\begin{rem} 
		The borderline case $\alpha=\frac{n}{n-2}$ (thus $\theta=1$ ) of $\mathbb{Y}_{\alpha}^+(V,g)$ reduces to the Yamabe constant of $V$.
	\end{rem}
	
	\subsection{Statement of main results}

	In \cite{X2005}, Xia proved that for a complete noncompact Riemannian manifold $M$ with nonnegative Ricci curvature, if $\mathbb{G}_{\alpha}^+(M, g) \geq \mathbb{G}_{\alpha}^+(\mathbb{R}^n, g_{\mathbb{R}^n})$, then $M$ must be flat. 
	Krist\'{a}ly \cite{BKT} demonstrated that this rigidity property extends to the case where $\mathbb{G}_{\alpha}^-(V, g) \geq \mathbb{G}_{\alpha}^-(\mathbb{R}^n, g_{\mathbb{R}^n})$ under the same assumptions. Moreover, Krist\'{a}ly \cite{BKT} established quantitative volume properties related to the Gagliardo-Nirenberg constant for metric measure spaces satisfying the curvature-dimension condition $CD(K, n)$ with $K \geq 0$.
	Our first main theorem establishes the following local rigidity results concerning the Gagliardo-Nirenberg constants: For an open subset $V \subset M$ that satisfies
	\begin{equation}\label{in}
		\mathbb{G}_{\alpha}^{\pm}(V,g)\geq \mathbb{G}_{\alpha}^{\pm}(\mathbb{R}^n, g_{\mathbb{R}^n}),
	\end{equation}
	we show that when $\alpha \in (0,1) \cup \left(1,\frac{n+6}{n+2}\right)$ for $n \leq 6$ and $\alpha$ is unrestricted for $n \geq 7$ (i.e.  $\alpha \in (0,1) \cup \left(1,\frac{n}{n-2}\right]$ when $n \geq 7$), if the first eigenvalue of the Ricci tensor satisfies
	\[
	\int_V \lambda_1(\operatorname{Rc}) \, d\mu_g \geq 0,
	\]
	then $V$ must be flat. More interestingly, 
	when $\alpha$ belongs to a specific subinterval around $1$ within the above range  (which will be explicitly specified in the following theorem), \eqref{in} and the weaker curvature condition on the scalar curvature
	\[
	\int_{V} \operatorname{Sc} \, d\mu_g \geq 0
	\]
	already imply that $V$ is flat.

	\begin{thm}\label{const_rigidity}
		Let $(M^n,g)$ be a Riemannian manifold of dimension $n\geq 3$, and let $V$ be an open bounded subset of $M^n$.	
		
		\noindent\textbf{(a) For the case $0<\alpha<1$:}
		\begin{enumerate}
			\item For $0<\alpha<1$, if 
			\begin{equation}\label{const_comparison_iso_2_6}
				\mathbb{G}_{\alpha}^-(V,g) \geq \mathbb{G}_{\alpha}^-(\mathbb{R}^n,g_{\mathbb{R}^n}),
			\end{equation}
			then the scalar curvature satisfies 
			$$\text{$\operatorname{Sc}(x) \leq 0$ for all $x \in V$.}$$
			
			\item  For $0<\alpha<1$,  
			if both \eqref{const_comparison_iso_2_6} and
			\begin{equation}\label{thm_Rc_point_compare_6}
				\int_{V} \lambda_1(\operatorname{Rc}) \, d\mu \geq 0 
			\end{equation}
			hold, \textbf{then} $V$ must be flat. Here, $\lambda_1(\operatorname{Rc})$ denotes the first eigenvalue of the Ricci tensor.
			
			\item Under the more restrictive range:
			\begin{equation}\label{6_i_3}
				\begin{cases}
					0<\alpha<1, & \text{if } n=3, \\
					\frac{2 n^2+n-25-\sqrt{28 n^2-16 n-287}}{2 n^2+3 n-38}<\alpha<1, & \text{if } n\geq 4,
				\end{cases}
			\end{equation}
			if both \eqref{const_comparison_iso_2_6} and 
			\begin{equation}\label{thm_R_point_compare_6}
				\int_{V} \operatorname{Sc} \, d\mu \geq 0 
			\end{equation}
			hold, \textbf{then} $V$ must be flat.
		\end{enumerate} 
		
		\noindent\textbf{(b) For the case $1<\alpha\le \frac{n}{n-2}$:}
		\begin{enumerate}
			\item For $\alpha$ in the range:
			\begin{equation}\label{6_i_2}
				\begin{cases}
					1< \alpha<\frac{n+4}{n}, & \text{if } n\leq 4, \\
					1<\alpha\leq \frac{n}{n-2}, & \text{if } n\geq 5,
				\end{cases}
			\end{equation}
			if 
			\begin{equation}\label{eq:comparison_iso_2_6}
				\mathbb{G}_{\alpha}^+(V, g) \geq \mathbb{G}_{\alpha}^+(\mathbb{R}^n, g_{\mathbb{R}^n}),
			\end{equation}
			then the scalar curvature satisfies 
			$$\text{$\operatorname{Sc}(x) \leq 0$ for all $x \in V$.}$$
			
			\item Under the more restrictive range:
			\begin{equation}\label{6_i_4666}
				\begin{cases}
					1<\alpha<\frac{n+6}{n+2}, & \text{if } n\leq 6, \\
					1<\alpha\le\frac{n}{n-2}, & \text{if } n\geq7,
				\end{cases}
			\end{equation}
			if both \eqref{eq:comparison_iso_2_6} and
			\begin{equation}\label{thm_Rc_point_compare_666}
				\int_{V} \lambda_1(\operatorname{Rc}) \, d\mu \geq 0 
			\end{equation}
			hold, \textbf{then} $V$ must be flat.
			
			\item Under the more restrictive range:
			\begin{equation}\label{6_i_4}
				\begin{cases}
					1<\alpha<\frac{n+6}{n+2}, & \text{if } n\leq 6, \\
					1<\alpha<\frac{2 n^2+n-25+\sqrt{28 n^2-16 n-287}}{2 n^2+3 n-38}, & \text{if } n\geq7,
				\end{cases}
			\end{equation}
			if both \eqref{eq:comparison_iso_2_6} and 
			\begin{equation}\label{Sc_int_nonnegative}
				\int_{V} \operatorname{Sc} \, d\mu \geq 0
			\end{equation}
			hold, \textbf{then} $V$ must be flat.
		\end{enumerate}
	\end{thm}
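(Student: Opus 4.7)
The plan is to handle all six parts via a single local test-function argument. I would fix any $p \in V$, work in normal coordinates based at $p$, and for small $\varepsilon > 0$ use as competitor in $\mathbb{G}_\alpha^\pm(V,g)$ the rescaled Euclidean extremal
\[
u_\varepsilon(x) = \chi(x)\, h_\alpha^1(x/\varepsilon) = \chi(x)\bigl(1 + (\alpha-1)|x|^2/\varepsilon^2\bigr)_+^{1/(1-\alpha)},
\]
where $\chi$ is a cutoff (needed only for $\alpha > 1$, where $h_\alpha^1$ decays only polynomially). Using the normal-coordinate expansions $\sqrt{\det g}(x) = 1 - \tfrac{1}{6}\operatorname{Rc}_{ij}(p)\,x^i x^j + O(|x|^3)$ and $g^{ij}(x) = \delta^{ij} + \tfrac{1}{3}R^{i}{}_{kl}{}^{j}(p)\,x^k x^l + O(|x|^3)$, together with the observation that for a radial test function the Riemann contraction $R_{ikjl}\,x^i x^j x^k x^l$ vanishes by antisymmetry of $R$ in $(i,k)$, each of the three integrals in the quotient acquires only a Ricci-traced second-order correction proportional to $\operatorname{Sc}(p)$. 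Assembling these gives
\[
\mathcal{Q}_M(u_\varepsilon) = \mathbb{G}_\alpha^\pm(\mathbb{R}^n, g_{\mathbb{R}^n})\bigl(1 + c_n(\alpha)\,\operatorname{Sc}(p)\,\varepsilon^2 + O(\varepsilon^3)\bigr),
\]
with $c_n(\alpha)$ an explicit rational function of $\alpha, n$ built from ratios of Euler beta moments $\int (h_\alpha^1)^q |y|^{2k}\,dy$. Combining $\mathbb{G}_\alpha^\pm(V,g) \leq \mathcal{Q}_M(u_\varepsilon)$ with the hypothesis $\mathbb{G}_\alpha^\pm(V,g) \geq \mathbb{G}_\alpha^\pm(\mathbb{R}^n, g_{\mathbb{R}^n})$ yields $c_n(\alpha)\,\operatorname{Sc}(p) \geq 0$, and the ranges $0<\alpha<1$ and \eqref{6_i_2} are precisely those on which one can verify by direct computation that $c_n(\alpha) < 0$. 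This proves the pointwise bound $\operatorname{Sc}(x) \leq 0$ on $V$ asserted in parts (a)(1) and (b)(1).

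For the Ricci-integral statements (a)(2) and (b)(2), I would repeat the expansion with the anisotropically scaled competitor $u_{\varepsilon,v}(x) := h_\alpha^1(A_v x/\varepsilon)$, where $A_v$ is a volume-preserving diagonal stretch along an arbitrary unit vector $v \in T_pM$. The same machinery then produces the directional bound $\operatorname{Rc}_p(v,v) \leq 0$ at every $p \in V$ and every $v$, so that $\operatorname{Rc}\leq 0$ pointwise on $V$; combined with $\int_V \lambda_1(\operatorname{Rc})\,d\mu \geq 0$, the smallest Ricci eigenvalue is squeezed to zero, and as all eigenvalues are nonpositive the entire Ricci tensor must vanish on $V$. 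For the scalar-integral statements (a)(3) and (b)(3), I would use the already-established pointwise $\operatorname{Sc}\leq 0$ together with $\int_V \operatorname{Sc}\geq 0$ to obtain $\operatorname{Sc} \equiv 0$, then push the radial expansion one order further: the $\varepsilon^4$ correction becomes a quadratic form in the Ricci components at $p$ whose coefficients are controlled by the next-order beta moments, and on the restricted subranges \eqref{6_i_3}, \eqref{6_i_4} (whose quadratic discriminant $\sqrt{28n^2-16n-287}$ signals precisely a quadratic-in-$\alpha$ sign condition) this quadratic form is negative definite, forcing $\operatorname{Rc} \equiv 0$ as well. In either route, one final expansion order isolates $|W|^2(p)$ (or, in dimension $3$, uses that $W$ vanishes identically) and gives $W \equiv 0$; together with $\operatorname{Rc} \equiv 0$ this forces the full Riemann tensor to vanish on $V$, i.e.\ $V$ is flat.

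The main obstacle is neither the test-function setup nor the expansion machinery but the explicit sign analysis of the rational coefficients $c_n(\alpha)$ at both first and second order. The intervals in \eqref{6_i_2}, \eqref{6_i_3}, \eqref{6_i_4666}, \eqref{6_i_4} are precisely the connected components on which the relevant sign conditions hold, and extracting the quadratic-discriminantal forms in \eqref{6_i_3}, \eqref{6_i_4} will require careful manipulation of ratios of Euler beta integrals that appear as the moments of $h_\alpha^1$. A secondary technicality is the cutoff analysis for $\alpha > 1$, where the competitor decays only polynomially; by choosing the cutoff scale between $\varepsilon$ and the injectivity radius at $p$, the tail contribution to each integral can be made $o(\varepsilon^3)$ and is harmless for the leading-order Ricci and scalar extraction.
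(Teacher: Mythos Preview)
Your first-order expansion strategy for parts (a)(1) and (b)(1) is sound and matches the paper's approach (the paper routes it through an auxiliary $\mathcal{L}^\pm_\alpha$-functional, but the content is the same). However, there are genuine gaps in your treatment of the remaining parts.

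\textbf{Parts (a)(2) and (b)(2).} The anisotropic-stretch idea fails at leading order. If $A_v$ is a nontrivial volume-preserving linear map, then $h_\alpha^1(A_v\,\cdot)$ is \emph{not} a Euclidean Gagliardo--Nirenberg extremal: the $L^p$-norms are preserved but $\|\nabla(h_\alpha^1\circ A_v)\|_2^2 = \tfrac{1}{n}\operatorname{tr}(A_vA_v^T)\,\|\nabla h_\alpha^1\|_2^2 > \|\nabla h_\alpha^1\|_2^2$ by AM--GM. Hence $\mathcal Q_M(u_{\varepsilon,v})$ already exceeds $\mathbb G_\alpha^\pm(\mathbb R^n)$ at order $\varepsilon^0$, and the hypothesis yields no information at order $\varepsilon^2$. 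The paper instead observes that $\operatorname{Sc}\le 0$ already forces $\lambda_1(\operatorname{Rc})\le \operatorname{Sc}/n\le 0$ pointwise; combining with $\int_V\lambda_1(\operatorname{Rc})\ge 0$ gives $\lambda_1(\operatorname{Rc})\equiv 0$, hence $\operatorname{Rc}\ge 0$, and together with $\operatorname{Sc}\le 0$ this forces $\operatorname{Rc}\equiv 0$. Flatness then follows from the second-order expansion, whose only surviving term is $-\tfrac{1}{6}|\operatorname{Rm}|^2(p)$.

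\textbf{Parts (a)(3) and (b)(3).} A purely radial competitor is not enough for $n\ge 4$. Carrying the radial expansion to the next order (with $\operatorname{Sc}(p)=0$) produces a second-order coefficient of the form $\tfrac{4}{9}|\operatorname{Rc}|^2(p)-\tfrac{1}{6}|\operatorname{Rm}|^2(p)$, where the $|\operatorname{Rc}|^2$-coefficient $\tfrac{4}{9}$ is \emph{independent of $\alpha$}. Combined with $|\operatorname{Rm}|^2\ge \tfrac{4}{n-2}|\operatorname{Rc}|^2$ this forces $\operatorname{Rc}(p)=0$ only when $\tfrac{4}{9}<\tfrac{2}{3(n-2)}$, i.e.\ only for $n=3$; so your argument cannot reproduce the $\alpha$-dependent ranges \eqref{6_i_3}, \eqref{6_i_4} for $n\ge 4$. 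The paper's key idea here is to multiply the radial profile by a correction factor $\xi$ with quadratic part $\mathbf a_{ij}x^ix^j$ and then \emph{choose} $\mathbf a=\tfrac{2(\alpha+1)}{3\chi}\operatorname{Rc}(p)$; this completes a square and drops the $|\operatorname{Rc}|^2$-coefficient to $\tfrac{4((n+5)\alpha-n-3)(\alpha-1)}{9\chi}$, which vanishes at $\alpha=1$ and is what produces the quadratic-in-$\alpha$ thresholds you noticed. Without this non-radial correction the method stalls beyond $n=3$. Also, no further expansion order is needed to isolate $|W|^2$: once $\operatorname{Rc}(p)=0$ is established, the same second-order inequality already contains $-\tfrac{1}{6}|\operatorname{Rm}|^2(p)\ge 0$, giving flatness directly.
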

	
	\begin{rem}
		The rigidity properties of Riemannian manifolds with lower scalar curvature bounds are an important subject of intensive study. 
		As highlighted by M. Gromov in the following problems: \textit{ Find verifiable criteria for extremality and rigidity, decide which manifolds admit extremal/rigid metrics and describe particular extremal/rigid manifolds; see Problem C in \cite{Gdozen}.}
		For a comprehensive overview of the rigidity results concerning scalar curvature, we refer to the survey \cite{Brendle} and the lectures \cite{Gromovlecture}, along with additional references therein.	Note that \textbf{(a)}(3) and \textbf{(b)}(3) in Theorem \ref{const_rigidity} demonstrate the rigidity of the scalar curvature with respect to the scalar curvature and the Gagliardo-Nirenberg constants.
	\end{rem}
	
	For the second main theorem of this paper, we show that when $\alpha$ is sufficiently close to $1$, if an open subset $V \subset M$ has unweighted Yamabe-type constants no less than $\mathbb{G}_{\alpha}^{\pm}(\mathbb{R}^n,g_{\mathbb{R}^n})$, then $V$ must be flat.

	\begin{thm}\label{Yamabe_rigidity}
		Let $(M^n,g)$ be a Riemannian manifold of dimension $n\geq 3$, and let $V$ be an open bounded subset of $M^n$. 	
		Suppose that there exist constants $\kappa^{\pm}$, depending only on $n$, such that
		when $0<|\alpha-1|<\kappa^{\pm}$, the following holds:
		\begin{equation}\label{Yamabe_comparison_iso_2_6}
			\mathbb{Y}_{\alpha}^{\pm}(V,g)\ge \mathbb{G}_{\alpha}^{\pm}(\mathbb{R}^n,g_{\mathbb{R}^n}),
		\end{equation}
		Then $V$ must be flat.
	\end{thm}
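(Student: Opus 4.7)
The plan is to reduce Theorem~\ref{Yamabe_rigidity} to Theorem~\ref{const_rigidity} by extracting pointwise nonpositivity of scalar curvature from the Yamabe-type hypothesis; once $\operatorname{Sc}\le 0$ on $V$, the bound $Y(u) \le F(u)$ for every $u \in W_0^{1,2}(V)$ gives $\mathbb{Y}_\alpha^{\pm}(V,g) \le \mathbb{G}_\alpha^{\pm}(V,g)$, transferring the comparison to the Gagliardo-Nirenberg constants and opening the door to Theorem~\ref{const_rigidity}.

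For the core step, I would fix $x_0 \in V$, choose $g$-normal coordinates centered at $x_0$, and test $\mathbb{Y}_\alpha^{\pm}(V,g) \ge \mathbb{G}_\alpha^{\pm}(\mathbb{R}^n, g_{\mathbb{R}^n})$ with the rescaled Del Pino-Dolbeault extremizer $u_\mu(x) := h_\alpha^\lambda(x/\mu)$, multiplied by a cutoff near $x_0$ in the case $1 < \alpha \le \frac{n}{n-2}$ so that $u_\mu \in W_0^{1,2}(V)$ for small $\mu$. Using the normal-coordinate expansions
\[
g_{ij}(x) = \delta_{ij} - \tfrac{1}{3} R_{ikjl}(x_0)\, x^k x^l + O(|x|^3), \qquad \sqrt{\det g}(x) = 1 - \tfrac{1}{6}\operatorname{Rc}_{kl}(x_0)\,x^k x^l + O(|x|^3),
\]
combined with the radial symmetry of $h_\alpha^\lambda$ (which, via $\int x^k x^l F(|x|)\,dx = \tfrac{\delta^{kl}}{n}\int |x|^2 F(|x|)\,dx$, converts Ricci traces into multiples of $\operatorname{Sc}(x_0)$), the Yamabe quotient admits the expansion
\[
Y(u_\mu) = \mathbb{G}_\alpha^{\pm}(\mathbb{R}^n, g_{\mathbb{R}^n}) + \mu^2\, K^{\pm}(\alpha,n)\, \operatorname{Sc}(x_0) + o(\mu^2), \qquad \mu \to 0^+.
\]
The coefficient decomposes as $K^{\pm}(\alpha,n) = K_0^{\pm}(\alpha,n) + \tfrac{1}{2(1+\alpha)}\,\tilde R(h_\alpha^\lambda)$, where $K_0^{\pm}$ is the Gagliardo-Nirenberg metric-correction coefficient of Xia \cite{X2005} and Krist\'aly \cite{BKT} (strictly negative on the ranges of Theorem~\ref{const_rigidity}(a)(1),(b)(1), which is exactly what produces the pointwise $\operatorname{Sc}\le 0$ conclusion there), and $\tilde R(h_\alpha^\lambda) > 0$ is a $\lambda$-independent norm ratio coming from the Yamabe scalar-curvature term. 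Continuity of $K^{\pm}(\alpha,n)$ in $\alpha$ then produces $\kappa^{\pm}(n) > 0$ such that $K^{\pm}(\alpha,n) < 0$ for $0 < |\alpha-1| < \kappa^{\pm}(n)$, whence the hypothesis forces $K^{\pm}(\alpha,n)\operatorname{Sc}(x_0) \ge 0$ and hence $\operatorname{Sc}(x_0) \le 0$.

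Since $x_0 \in V$ was arbitrary, $\operatorname{Sc} \le 0$ on $V$, so $\mathbb{G}_\alpha^{\pm}(V,g) \ge \mathbb{Y}_\alpha^{\pm}(V,g) \ge \mathbb{G}_\alpha^{\pm}(\mathbb{R}^n, g_{\mathbb{R}^n})$. Shrinking $\kappa^{\pm}(n)$ further to place $\alpha$ inside the range \eqref{6_i_3} (resp.\ \eqref{6_i_4}), I would then invoke Theorem~\ref{const_rigidity}(a)(3) (resp.\ (b)(3)), which requires only the additional condition $\int_V \operatorname{Sc}\,d\mu_g \ge 0$. Combined with the pointwise $\operatorname{Sc} \le 0$ this amounts to $\operatorname{Sc} \equiv 0$, which I would obtain either from a second, ``macroscopic'' test-function argument (testing the Yamabe hypothesis with a broadly supported near-constant cutoff of $V$ to feed $\int_V \operatorname{Sc}$ into the quotient) or from a higher-order refinement of the concentration expansion above that rules out strict $\operatorname{Sc}<0$ anywhere. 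Flatness then follows from Theorem~\ref{const_rigidity}.

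The hard part will be the explicit computation and sign analysis of $K^{\pm}(\alpha,n)$: one must track every $\mu^2$-correction (from $g^{ij}$ in the gradient norm, from $\sqrt{\det g}$ in each Lebesgue norm appearing in both numerator and denominator, and from the new scalar-curvature term), assemble the result into an explicit rational function of one-dimensional integrals of $h_\alpha^\lambda$, and verify strict negativity on a geometry-independent $\alpha$-neighborhood of $1$ — equivalently, show that the positive Yamabe contribution $\tfrac{1}{2(1+\alpha)}\tilde R(h_\alpha^\lambda)$ does not dominate the negative Gagliardo-Nirenberg contribution $K_0^{\pm}(\alpha,n)$ for $\alpha$ close to $1$. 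The secondary obstacle is extracting the integral condition $\int_V \operatorname{Sc}\,d\mu_g \ge 0$, which requires producing a test function whose Dirichlet energy is sufficiently small compared to its scalar-curvature integral — delicate since $W_0^{1,2}$-functions on the bounded open set $V$ cannot have vanishing Dirichlet energy.
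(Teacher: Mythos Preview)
Your plan has a fundamental gap at the very first step: the coefficient $K^{\pm}(\alpha,n)$ you want to be negative is in fact \emph{identically zero}. The scalar-curvature term $\tfrac{1}{2(1+\alpha)}\int \operatorname{Sc}\,u^2$ in the Yamabe quotient is calibrated precisely to cancel the first-order metric correction $K_0^{\pm}(\alpha,n)\operatorname{Sc}(x_0)$ coming from the Gagliardo--Nirenberg part. In the paper's language this is the content of Lemma~\ref{expansion_W_a}: the $\mathcal{L}^{\pm}_\alpha$-expansion contributes $\zeta_1\operatorname{Sc}(p)\,t$ at first order (Theorem~\ref{expansion_L_a}), while the added scalar-curvature integral contributes $-\zeta_1\operatorname{Sc}(p)\,t$ (equation~\eqref{sc_expansion_-}), and these cancel exactly for every $\alpha$, not merely in the limit $\alpha\to 1$. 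Consequently your expansion of $Y(u_\mu)$ has no $\mu^2$ term at all, and you cannot extract $\operatorname{Sc}(x_0)\le 0$ this way. (This cancellation is not accidental: the weighted Yamabe quotient is conformally invariant, so it should not detect scalar curvature at leading order.)

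The paper therefore works directly at order $t^2$. There a second cancellation occurs (the $\Delta\operatorname{Sc}$ terms also match), and the surviving coefficient is a combination of $|\operatorname{Rm}|^2(p)$, $|\operatorname{Rc}|^2(p)$, and $\operatorname{Sc}^2(p)$. To make this coefficient usable one must modify the test function by the correction $\textbf{a}=\tfrac{2(\alpha+1)}{3\chi}\operatorname{Rc}(p)$ (Remark~\ref{Rk_choose_of_a}), which minimizes the $|\operatorname{Rc}|^2$ contribution; the $\operatorname{Sc}^2$ contribution carries a coefficient $j^{\pm}(\alpha,n)$ that vanishes as $\alpha\to 1$, and the curvature decomposition~\eqref{orthogonal_decomposition} then forces $\operatorname{Sc}(p)=0$, $\operatorname{Rc}(p)=0$, $\operatorname{Rm}(p)=0$ in one stroke (Theorem~\ref{mu_constant_rigidity}). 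This also sidesteps your ``secondary obstacle'' of producing $\int_V\operatorname{Sc}\,d\mu\ge 0$ from a large-scale test function, which as you correctly suspect is not available for $W_0^{1,2}(V)$ on a bounded $V$; the paper never needs such an integral bound because the second-order local expansion already yields pointwise flatness.
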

	\begin{rem}
		The exact values of $\kappa^{\pm}$ are determined by solutions of the seventh-degree polynomial equations (see Remark~\ref{exact_number_equation}), which are generally difficult to express in closed numerical form.
	\end{rem}

	It remains unclear for us whether the ranges of $\alpha$ in Theorem~\ref{const_rigidity} (specified in \eqref{6_i_3}, \eqref{6_i_2}, \eqref{6_i_4666} and \eqref{6_i_4}) and the range \eqref{Yamabe_comparison_iso_2_6} in Theorem \ref{Yamabe_rigidity} are sharp for these results. However, the following example demonstrates that
	\begin{itemize}
		\item The conclusion of \textbf{(b)}(3) in Theorem \ref{const_rigidity} fails for the borderline case $\alpha=\frac{n}{n-2}$ , where $\mathbb{G}_{\alpha}^+(V,g)$ reduces to the Sobolev constant;
		\item The conclusion of Theorem~\ref{Yamabe_rigidity} also fails for the borderline case $\alpha=\frac{n}{n-2}$, where $\mathbb{Y}_{\alpha}^+(V,g)$ reduces to the Yamabe constant.
	\end{itemize}
	Here, we remark that, for comparison, the rigidity in \textbf{(b)}(2) in Theorem \ref{const_rigidity} holds for the borderline case $\alpha=\frac{n}{n-2}$ when $n \ge 7$. 
	This shows that when $n \ge 7$, the range of $\alpha$ for which we can guarantee that \eqref{eq:comparison_iso_2_6} and \eqref{Sc_int_nonnegative} imply the flatness of $V$ is strictly smaller than the range of $\alpha$ for which we can guarantee that \eqref{eq:comparison_iso_2_6} and \eqref{thm_Rc_point_compare_666} imply  the flatness of $V$.

	\begin{exam}[Schwarzschild metric]
		Note that the scalar curvature and Yamabe quotient of $g=u^{\frac{4}{n-2}} g_0$ on $M$ are related to the scalar curvature of $g_0$ by
		\begin{equation}\label{conformal_change_R}
			\operatorname{Sc}(g)= u^{-\frac{n+2}{n-2}} \left(\operatorname{Sc}\left(g_0\right) u-c(n)\Delta_{g_0} u\right),
		\end{equation}
		where $c(n)=\frac{4(n-1)}{n-2}$, and
		\begin{equation}\label{conformal_of_Q}
			Q_{g}(\phi)=Q_{g_0}(\phi u) \text{ for any smooth function } \phi,
		\end{equation}
		where the Yamabe quotient defined as 
		$$ Q_{g}(\phi):=\frac{\int_M \left( c(n) |\nabla_g \phi|^2+\operatorname{Sc}(g)\cdot \phi^2\right)d\mu_g}{\left(\int_M \phi^{\frac{2n}{n-2}}d\mu_g\right)^{\frac{n-2}{n}}} $$
		\text{ (cf. \cite{STbook})}.
		For a manifold $x\in\mathbb{R}^n\setminus\{0\}$ with $n \geq 3$, let $g_0:=\delta_{ij}$, $u(x):=1+ \frac{m}{2\left|x\right|^{n-2}}$, and $g=u^{\frac{4}{n-2}} g_0$, where $m$ is a positive real number. The hypersurface defined by $|x|=\frac{m}{2}$ is a totally geodesic submanifold, called the horizon. Reflecting the region $|x|>\frac{m}{2}$ across this horizon yields a complete smooth Riemannian manifold $N$, which is diffeomorphic to $S^{n-1} \times(0,1)$. Moreover, by \eqref{conformal_change_R} and $\Delta_{g_0}\frac{1}{\left| x\right|^{n-2}}=0$, we have $\operatorname{Sc}(g)\equiv 0$. 
		By \eqref{conformal_of_Q}, we have 
		$$
		\frac{\int_N |\nabla_g \phi|^2d\mu_g}{\left(\int_N \phi^{\frac{2n}{n-2}}d\mu_g\right)^{\frac{n-2}{n}}}=\frac{\int_N |\nabla_{g_0} (\phi u)|^2d\mu_{g_0}}{\left(\int_N (\phi u)^{\frac{2n}{n-2}}d\mu_{g_0}\right)^{\frac{n-2}{n}}} \text{ for any smooth function } \phi.
		$$
		This implies that the optimal constant of the Sobolev inequality for $(N,g)$ coincides with the Euclidean best constant. However, although $\operatorname{Sc}(g)\equiv 0$, the manifold $(N,g)$ is not Ricci flat. Note that $\mathbb{Y}_{\alpha}^+(N,g)$ reduces to $\mathbb{G}_{\alpha}^+(N,g)$ when $\operatorname{Sc}(g)\equiv 0$. So, this example also shows that the conclusion of Theorem~\ref{Yamabe_rigidity} fails when $\alpha=\frac{n}{n-2}$.
	\end{exam}

	\subsection{Strategy of our proofs}
	We outline the strategy of our proofs that employs the power series expansion method. In our previous work \cite{Cheng}, it was shown that the logarithmic Sobolev functional $\mathcal{L}(V,g,u,t)$ and Perelman's $\mathcal{W}$-functional $\mathcal{W}(V,g,u,t)$ (defined in \eqref{Perel_W} and \eqref{log_func}) admit the following power series expansions:
	\begin{equation}\label{expansion_nu_opt}
		\mathcal{W}(V,g,u,t)=
		-\frac{1}{6}|\operatorname{Rm}|^2(p)t^2+o(t^2),
	\end{equation}		
	and
	\begin{equation}\label{expansion_n_opt}
		\mathcal{L}(V,g,u,t)=
		-\operatorname{Sc}(p)t-\left(\Delta \operatorname{Sc}(p)+\frac{1}{6}|\operatorname{Rm}|^2(p)\right)t^2+o(t^2),
	\end{equation}   
	when $u$ is chosen as $ u = (4\pi t)^{-\frac{n}{4}}e^{-\frac{d(p,x)^2}{8t}}\eta$ (note that the function $ (4\pi t)^{-\frac{n}{4}}e^{-\frac{d(p,x)^2}{8t}}$ achieves the optimal constant in the logarithmic Sobolev inequality of Euclidean space), where $\eta$ satisfies:
	\begin{enumerate}
		\item $p\in\operatorname{supp}(\eta) \subset\subset V$,
		\item $\eta(x,t)^2$ admits the local expansion 
		$$
		\eta(x,t)^2 = \sum_{k=0}^2 \phi_k(x)t^k + o(t^2) \quad \text{around } (p,0),
		$$
		with the following regularity conditions at $p$:
		Both fourth derivatives of $\phi_0$ and second derivatives of $\phi_1$ exist at $p$,
		$\phi_2$ is continuous at $p$. Moreover, $\phi_0(p)=1$ and $\nabla\phi_0(p)=0.$
		
		\item $\phi_0$  admits the local expansion
		$$
		\phi_0(x) = 1+\frac{1}{3}\operatorname{Rc}(p)_{ij}x^ix^j + o(d^2) \quad \text{around } p,
		$$
		where $\{x^i\}$ denotes the normal coordinates on $T_pM$.
	\end{enumerate}
	Using the expansions \eqref{expansion_nu_opt} and \eqref{expansion_n_opt}, 
	the author \cite{Cheng} proved that if an open subset $V \subset M$ satisfies
	$\mathop{\int}_{V} \operatorname{Sc} \, d\mu \geq 0$ and the optimal constant of the log-Sobolev inequality (the limit case $\alpha \to 1$ of the Gagliardo-Nirenberg constants) 
	is not less than that of the Euclidean space, then $V$ must be flat.

	Motivated by these results, we first derive  the power series expansions of 
	$\mathcal{L}^{\pm}_{\alpha}$-functionals $\mathcal{L}^{\pm}_{\alpha}(V,g, u, \tau)$ and $\mathcal{W}^{\pm}_{\alpha}$-functionals $\mathcal{W}^{\pm}_{\alpha}(V,g, u, \tau)$ (see Definitions \ref{Def_sec2} and Theorem \ref{connection}) - which generalize the logarithmic Sobolev and Perelman's $\mathcal{W}$-functionals. These expansions (see Theorem \ref{expansion_L_a} and Lemma \ref{expansion_W_a}) are obtained when  $u$ takes the following forms:
	\begin{itemize}
		\item For $\mathcal{L}^{-}_{\alpha}$ and $\mathcal{W}^{-}_{\alpha}$-functionals (with normalization $\int_V u^{\alpha+1} d\mu_g = 1$):
		\[
		u = C_1t^{-\frac{n}{2(\alpha+1)}}\left(1 + \frac{(\alpha-1)d(p,x)^2}{8t}\right)_+^{\frac{1}{1-\alpha}}\xi_-;
		\]
		
		\item For $\mathcal{L}^{+}_{\alpha}$ and $\mathcal{W}^{+}_{\alpha}$-functionals (with normalization $\int_V u^{2\alpha} d\mu_g = 1$):
		\[
		u = C_2t^{-\frac{n}{4\alpha}}\left(1 + \frac{(\alpha-1)d(p,x)^2}{8t}\right)_+^{\frac{1}{1-\alpha}}\xi_+,
		\]
	\end{itemize}
	(note that $C(t)\left(1 + \frac{(\alpha-1)d(p,x)^2}{8t}\right)_+^{\frac{1}{1-\alpha}}$ attains the optimal constant $\mathbb{G}_{\alpha}^{\pm}(\mathbb{R}^n,g_{\mathbb{R}^n})$), where the cutoff functions $\xi_{\pm}$ satisfy conditions (1) and (2).
	To prove our rigidity results, we apply these expansion formulas with the cutoff functions $\xi_{\pm}$ chosen according to the precise criteria given in Remark \ref{Rk_choose_of_a} and implemented in the proofs of Theorem \ref{mu_rigidity} and Theorem \ref{mu_constant_rigidity}.
	
	Finally, we can also use the expansion formulas obtained in this paper to prove the following rigidity theorem concerning the scalar curvature and isoperimetric inequality, which was proved by the author in \cite{Cheng} (see Theorem 1.1 and Remark 1.2 in \cite{Cheng}):
	\begin{thm}\cite{Cheng}\label{rigidity_iso_profile}
		Let $(M^n,g)$ be an $n$-dimensional Riemannian manifold, and let $V$ be a bounded open subset with $\overline{V}\subset M$.
		Suppose that  the following two conditions hold: 
		
		\noindent (a) The scalar curvature of $V$ satisfies
		\begin{equation}\label{scalar_curvature_lowerbound}
			\fint_{V} \operatorname{Sc} \, d\mu :=\frac{\int_{V} \operatorname{Sc} \, d\mu}{\mathrm{Vol}(V)} \ge n(n-1)K,
		\end{equation}	
		\noindent (b)  There exists $\beta_0 > 0$ such that the isoperimetric profile of $V$ satisfies
		\begin{equation}\label{comparison_iso}
			\operatorname{I}(V, \beta) \coloneqq \inf_{\substack{\Omega \subset V \\ \mathrm{Vol}(\Omega) = \beta}} \mathrm{Area}(\partial \Omega) \geq \operatorname{I}(M^n_K, \beta) \quad \text{for all } \beta < \beta_0,
		\end{equation}
		where  $M^n_K$ is the  space form of constant sectional curvature $K$.\\
		Then the sectional curvature of $V$ satisfies 
		$$\operatorname{Sec}(x)=K  \text{ for all } x\in V.$$ 
	\end{thm}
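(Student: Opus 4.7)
The plan is to reduce Theorem~\ref{rigidity_iso_profile} to a pointwise application, at an arbitrary $p\in V$, of the expansion \eqref{expansion_n_opt} of the logarithmic Sobolev functional $\mathcal{L}$, after first using symmetrization to convert the isoperimetric hypothesis (b) into a functional inequality between $V$ and the model space form $M^n_K$.

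Fix $p\in V$. For each small $\tau>0$, I would test with the concentrated family
\[
u_\tau(x) = (4\pi\tau)^{-\frac{n}{4}}\, e^{-\frac{d(p,x)^2}{8\tau}}\, \eta_\tau(x),
\]
where the cutoff $\eta_\tau$ is supported in a geodesic ball $B_{r(\tau)}(p)\subset V$ of volume below $\beta_0$ (possible for $\tau$ small), and $\eta_\tau^2$ satisfies conditions (1)--(3) from the strategy outline, in particular $\eta_\tau^2 = 1 + \tfrac{1}{3}\operatorname{Rc}(p)_{ij}x^ix^j + o(d^2)$ near $p$. The classical P\'olya--Szeg\H{o} rearrangement, which invokes $\operatorname{I}(V,\beta)\ge\operatorname{I}(M^n_K,\beta)$ only on the volume range $(0,\beta_0)$ swept out by super-level sets of $u_\tau$, produces a symmetric decreasing $u_\tau^\ast\in W^{1,2}_0(M^n_K)$ with $\|u_\tau\|_{L^q(V)}=\|u_\tau^\ast\|_{L^q(M^n_K)}$ for all $q\ge 1$ and $\|\nabla u_\tau\|_{L^2(V)}\ge\|\nabla u_\tau^\ast\|_{L^2(M^n_K)}$. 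Since $\mathcal{L}$ depends on $u$ only through $\int|\nabla u|^2$, $\int u^2$ and $\int u^2\log u^2$, and is monotone in $\int|\nabla u|^2$, this yields $\mathcal{L}(V,g,u_\tau,\tau)\ge\mathcal{L}(M^n_K,g_K,u_\tau^\ast,\tau)$.

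Applying the expansion \eqref{expansion_n_opt} on both sides (on $M^n_K$ one has $\operatorname{Sc}\equiv n(n-1)K$, $\Delta\operatorname{Sc}\equiv 0$ and $|\operatorname{Rm}|^2\equiv 2n(n-1)K^2$) and matching the $\tau$-coefficient gives
\[
\operatorname{Sc}(p)\le n(n-1)K\qquad\text{for every }p\in V.
\]
Combined with hypothesis (a), this forces $\operatorname{Sc}\equiv n(n-1)K$ on $V$, hence $\Delta\operatorname{Sc}\equiv 0$, and matching the $\tau^2$-coefficient then yields $|\operatorname{Rm}|^2(p)\le 2n(n-1)K^2$. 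The standard orthogonal decomposition
\[
|\operatorname{Rm}|^2=|W|^2+\frac{4}{n-2}\Bigl|\operatorname{Rc}-\frac{\operatorname{Sc}}{n}g\Bigr|^2+\frac{2}{n(n-1)}\operatorname{Sc}^2
\]
shows this saturates the universal bound $|\operatorname{Rm}|^2\ge\tfrac{2}{n(n-1)}\operatorname{Sc}^2$, so $W(p)=0$ and $\operatorname{Rc}(p)=(n-1)K\,g$ at every $p\in V$, which is equivalent to $\operatorname{Sec}(x)\equiv K$ on $V$.

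The main obstacle is the rigorous expansion of $\mathcal{L}(M^n_K,g_K,u_\tau^\ast,\tau)$: the rearranged profile $u_\tau^\ast$ need not fit the structural form required by \eqref{expansion_n_opt}, since the condition (3) on $\phi_0$ involves the Ricci tensor at the center (here $(n-1)K\,g$). The natural remedy is to show that the canonical Gaussian-like bump on $M^n_K$ is near-optimal for $\mathcal{L}$ on the concentration scale $\tau$, so $\mathcal{L}(M^n_K,g_K,u_\tau^\ast,\tau)$ has the same two-term Taylor expansion as that bump, namely $-n(n-1)K\,\tau - \tfrac{1}{3}n(n-1)K^2\,\tau^2 + o(\tau^2)$. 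Combined with $L^q$-norm preservation under rearrangement, this makes the Taylor-coefficient matching above rigorous.
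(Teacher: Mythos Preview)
Your overall architecture---symmetrization via the isoperimetric hypothesis, then expansion comparison---matches the paper's proof in Section~6 (the paper uses the $\mathcal{L}^{\pm}_{\alpha}$-functionals rather than the log-Sobolev $\mathcal{L}$, but that is a cosmetic difference here). You have also correctly isolated the only real difficulty: controlling the expansion of the symmetrized function $u_\tau^\ast$ on $M^n_K$.

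However, your proposed remedy does not close the gap. Saying the canonical bump is ``near-optimal'' for $\mathcal{L}$ on $M^n_K$ only tells you that $\inf_u \mathcal{L}(M^n_K,u,\tau)\le -n(n-1)K\tau-\tfrac{1}{3}n(n-1)K^2\tau^2+o(\tau^2)$; combined with the trivial $\mathcal{L}(M^n_K,u_\tau^\ast,\tau)\ge\inf_u$, the inequality runs the wrong way. To use the comparison $\mathcal{L}(V,u_\tau,\tau)\ge\mathcal{L}(M^n_K,u_\tau^\ast,\tau)$ at order $\tau^2$ you need a \emph{lower} bound on $\mathcal{L}(M^n_K,u_\tau^\ast,\tau)$ of exactly this form, and invoking $L^q$-norm preservation only equates the entropy and mass terms; it says nothing about the structure of $u_\tau^\ast$ needed for the expansion to apply on the model side.

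The paper resolves this not by comparing to an infimum but by verifying directly that the rearranged function lies in $\mathscr{B}_{p_K}(M^n_K)$. The mechanism is: from the level-set relation (co-area formula) one extracts that $\bar u=u_\pm(r)\bar\eta(r,t)$ with $\bar\eta^2$ admitting the required Taylor structure in $(r,t)$; then the equality of \emph{all} moments $\int u^q=\int\bar u^q$ forces the first-order coefficients $D_1(q)/D_0(q)$ to match for every $q$, which pins down both $\bar\beta_1$ and $\operatorname{tr}(\bar{\mathbf a})$. After the $O(\tau)$ step has already given $\operatorname{Sc}\equiv n(n-1)K$, one finds $\operatorname{tr}(\bar{\mathbf a})=\operatorname{tr}(\mathbf a)$ equals the model value; radial symmetry of $\bar\eta$ then upgrades this trace information to the full tensor $\bar{\mathbf a}=\tfrac{2(\alpha+1)}{3\chi}\operatorname{Rc}_K(p_K)$, i.e.\ $\bar u\in\mathscr{B}_{p_K}(M^n_K)$. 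Only then does the expansion (Theorem~\ref{expansion_L_a}) apply on the model side, and Lemma~\ref{section} finishes. Your sketch is missing precisely this structural verification of the rearranged competitor.
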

	
	\subsection{Organization of this paper}	
	
	The present paper is organized as follows. In Section 2, we introduce the definitions of $\mathcal{L}^{\pm}_{\alpha}$-functionals and $\mathcal{W}^{\pm}_{\alpha}$-functionals. Then we investigate the relationship between $\mathbb{G}_{\alpha}^{\pm}(V,g)$ (resp. $\mathbb{Y}_{\alpha}^{\pm}(V,g)$) and $\mathbb{L}^{\pm}_{\alpha}(V,g, \tau)$ (resp. $\mathcal{\mu}^{\pm}_{\alpha}(V,g,\tau)$). In Section 3, we present preliminary calculations for the power series expansion formulas of $\mathcal{L}^{\pm}_{\alpha}$-functionals and $\mathcal{W}^{\pm}_{\alpha}$-functionals. In Section 4, we derive the power series expansion formulas for the $\mathcal{L}^{\pm}_{\alpha}$-functionals, and subsequently provide the proof of Theorem~\ref{const_rigidity}. In Section 5,
	we derive the power series expansion formulas for the $\mathcal{W}^{\pm}_{\alpha}$-functionals when choosing $\xi_{\pm}\in \mathscr{B}_p(V)$, and subsequently provide the proof of Theorem~\ref{Yamabe_rigidity}.
	In Section 6, we give the proof of Theorem \ref{rigidity_iso_profile} by using the expansion formulas of $\mathcal{L}^{\pm}_{\alpha}$-functionals.
	
	\section{Definitions of $\mathcal{L}^{\pm}_{\alpha}$-functionals and $\mathcal{W}^{\pm}_{\alpha}$-functionals }
	
	For the proofs of Theorem \ref{const_rigidity} and Theorem \ref{Yamabe_rigidity}, it is convenient to use the power series expansion formulas of the $\tau$-dependent $\mathcal{L}^{\pm}_{\alpha}$-functionals and the $\mathcal{W}^{\pm}_{\alpha}$-functionals. The $\mathcal{W}^{+}_{\alpha}$- functional was introduced by Jeffrey S. Case as a generalization of Perelman's $\mathcal{W}$-functional (cf. Definition 3.8 in \cite{Jcase}), and $\mathcal{W}^{-}_{\alpha}$-functionals and $\mathcal{L}^{\pm}_{\alpha}$-functionals can be defined in a similar way.
	\begin{defn}\label{Def_sec2}
		Let $(M^n,g)$ be an $n$-dimensional Riemannian manifold with $n \geq 3$, and let $V$ be an open bounded subset of $M^n$. The constants $\gamma$ and $\theta$ are defined in \eqref{def:gamma} and \eqref{def:theta}, respectively, while $\mathcal{N}_{\alpha, n}$ and $\mathcal{G}_{\alpha, n}$ denote the optimal constants given in \eqref{def_optimal_const}. Throughout this paper, we fix an arbitrary positive constant $\mathfrak{m}$.
		\\
		\noindent (i) For $0<\alpha<1$, denote
		$$
		\Gamma_{\alpha}:=\frac{n}{2}\cdot\frac{1-\alpha}{1+\alpha} \quad \text{and} \quad \Sigma^-_{\alpha}:= \left(\frac{\Gamma_{\alpha}}{\Gamma_{\alpha}+1}\right)^{1-\frac{\Gamma_{\alpha}}{2\Gamma_{\alpha}+1}} \mathcal{N}_{\alpha,n}^{\frac{2}{\gamma}\cdot\frac{\Gamma_{\alpha}}{2\Gamma_{\alpha}+1}},
		$$
		where $\mathcal{N}_{\alpha,n}=\mathbb{G}_{\alpha}^-(\mathbb{R}^n,g_{\mathbb{R}^n})$  is defined in  \eqref{def_optimal_const}.\\
		The\textbf{ $\mathcal{L}^-_{\alpha}$- functional} is defined by
		$$
		\begin{aligned}
			\mathcal{L}^-_{\alpha}(V,g, u, \tau) = \tau^{\Gamma_{\alpha}+1} \int_V \left|\nabla u\right|^2 d\mu + \mathfrak{m}\left( \tau^{-\Gamma_{\alpha}}\int_V u^{2\alpha}d\mu - \int_V u^{\alpha+1}d\mu\right) + \mathfrak{m}\left(1-\frac{2\Gamma_{\alpha}+1}{\Gamma_{\alpha}}\mathfrak{m}^{-\frac{\Gamma_{\alpha}}{2\Gamma_{\alpha}+1}}\Sigma^-_{\alpha}\right),
		\end{aligned}
		$$
		and the \textbf{$\mathcal{W}^-_{\alpha}$-functional}  is defined by
		$$
		\begin{aligned}
			\mathcal{W}^-_{\alpha}(V,g, u, \tau) = \mathcal{L}^-_{\alpha}(V,g, u, \tau) + \frac{1}{2(1+\alpha)}\tau^{\Gamma_{\alpha}+1} \int_V \operatorname{Sc} \cdot u^2 d\mu.
		\end{aligned}
		$$

		\noindent (ii) For $1<\alpha\le \frac{n}{n-2}$, denote
		$$
		\Theta_{\alpha}:=\frac{n}{4}\frac{\alpha-1}{\alpha} \quad \text{and} \quad \Sigma^+_{\alpha}:= \left(\frac{\Theta_{\alpha}}{1-2\Theta_{\alpha}}\right)^{1-\frac{\Theta_{\alpha}}{1-\Theta_{\alpha}}} \mathcal{G}_{\alpha,n}^{\frac{2}{\theta}\cdot\frac{\Theta_{\alpha}}{1-\Theta_{\alpha}}},
		$$
		where $ \mathcal{G}_{\alpha,n}=\mathbb{G}_{\alpha}^+(\mathbb{R}^n,g_{\mathbb{R}^n})$  is defined in  \eqref{def_optimal_const}.\\
		The \textbf{$\mathcal{L}^+_{\alpha}$- functional}  is defined by
		$$
		\begin{aligned}
			\mathcal{L}^+_{\alpha}(V,g, u, \tau) = \tau^{1-2\Theta_{\alpha}} \int_V \left|\nabla u\right|^2 d\mu  + \mathfrak{m}\left( \tau^{-\Theta_{\alpha}}\int_V u^{\alpha+1} d\mu - \int_V u^{2\alpha} d\mu \right) + \mathfrak{m}\left(1-\frac{1-\Theta_{\alpha}}{\Theta_{\alpha}}\mathfrak{m}^{-\frac{\Theta_{\alpha}}{1-\Theta_{\alpha}}}\Sigma^+_{\alpha}\right),
		\end{aligned}
		$$
		and
		the \textbf{$\mathcal{W}^+_{\alpha}$- functional}  is defined by
		$$
		\begin{aligned}
			\mathcal{W}^+_{\alpha}(V,g, u, \tau) = \mathcal{L}^+_{\alpha}(V,g, u, \tau) + \frac{1}{2(1+\alpha)}\tau^{1-2\Theta_{\alpha}} \int_V \operatorname{Sc} \cdot u^2 d\mu.
		\end{aligned}
		$$
		
		\noindent   (iii)
		The \textbf{$\mathbb{L}^{\pm}_{\alpha}$-constant} of $V$ is defined by
		$$
		\mathbb{L}^-_{\alpha}(V,g, \tau):= \inf_{\substack{u \in W_0^{1,2}(V),\int_{V} u^{\alpha+1} d\mu = 1}} \mathcal{L}^-_{\alpha}(V,g, u, \tau), \quad \mathbb{L}^{+}_{\alpha}(V,g, \tau):= \inf_{\substack{u \in W_0^{1,2}(V),\int_{V} u^{2\alpha} d\mu = 1}} \mathcal{L}^{+}_{\alpha}(V,g, u, \tau),
		$$
		and the \textbf{$\mathbb{\mu}^{\pm}_{\alpha}$-constant} of $V$ is defined by
		$$
		\mathbb{\mu}^-_{\alpha}(V,g, \tau):= \inf_{\substack{u \in W_0^{1,2}(V),\int_{V} u^{\alpha+1} d\mu = 1}} \mathcal{W}^-_{\alpha}(V,g, u, \tau),\quad	\mathbb{\mu}^{+}_{\alpha}(V,g, \tau):= \inf_{\substack{u \in W_0^{1,2}(V),\int_{V} u^{2\alpha} d\mu = 1}} \mathcal{W}^{+}_{\alpha}(V,g, u, \tau).
		$$
	\end{defn}

	\begin{rem}
		Note that $\mathcal{W}^{\pm}_{\alpha}(V,g, u, \tau)$ are the generalization of Perelman's $\mathcal{W}$-functional. Indeed,
		when $\alpha\to 1$ and let $\mathfrak{m}$ be a constant depending on $\alpha$ such that  $\mathfrak{m}:=\mathfrak{m}(\alpha)$ satisfying $\frac{|\alpha-1|}{2}\mathfrak{m}(\alpha)\to 1$, $\mathcal{W}^{\pm}_{\alpha}(V,g, u, \tau)$
		tends to the Perelman's $\mathcal{W}$-functional
		\begin{equation}\label{Perel_W}
			\mathcal{W}(V,g, u, \tau)=\tau \int_V\left( \left|\nabla u\right|^2+\frac{1}{4}\operatorname{Sc}\cdot u^2 \right)d\mu-\int_V u^2\log u^2d\mu-\frac{n}{2}\log \tau-n,
		\end{equation}
		and
		$\mathcal{L}^{\pm}_{\alpha}(V,g, u, \tau)$
		tends to following  functional which related to log-Sobolev inequality:
		\begin{equation}\label{log_func}	
			\mathcal{L}(V,g, u, \tau)=\tau \int_V \left|\nabla u\right|^2 d\mu-\int_V u^2\log u^2-\frac{n}{2}\log \tau-n.
		\end{equation}
	\end{rem}
	
	The following theorem reveals the relationship between $\mathbb{G}_{\alpha}^{\pm}(V,g)$ (resp.\ $\mathbb{Y}_{\alpha}^{\pm}(V,g)$) and $\mathbb{L}^{\pm}_{\alpha}(V,g, \tau)$ (resp.\ $\mathcal{\mu}^{\pm}_{\alpha}(V,g,\tau)$).
	
	\begin{thm}\label{connection}
		$\mathbb{G}_{\alpha}^{\pm}(V,g)\ge \mathbb{G}_{\alpha}^{\pm}(\mathbb{R}^n,g_{\mathbb{R}^n})$	if and only	if $\mathbb{L}^{\pm}_{\alpha}(V,g, \tau)\ge 0$ for any $\tau>0$. Moreover,	$\mathbb{Y}_{\alpha}^{\pm}(V,g)\ge \mathbb{G}_{\alpha}^{\pm}(\mathbb{R}^n,g_{\mathbb{R}^n})$	if and only	if $\mathcal{\mu}^{\pm}_{\alpha}(V,g,\tau)\ge 0$ for any $\tau>0$.
	\end{thm}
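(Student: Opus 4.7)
The plan is to view $\tau \mapsto \mathcal{L}^{\pm}_{\alpha}(V,g,u,\tau)$ and $\tau \mapsto \mathcal{W}^{\pm}_{\alpha}(V,g,u,\tau)$ as one-variable functions for each fixed admissible $u$, minimize explicitly in $\tau$, and show that the constants $\Sigma^{\pm}_{\alpha}$ are engineered so that the minimum value is nonnegative precisely when the Gagliardo--Nirenberg (resp.\ unweighted Yamabe-type) quotient at $u$ is at least $\mathbb{G}_{\alpha}^{\pm}(\mathbb{R}^n,g_{\mathbb{R}^n})$. Taking the infimum over admissible $u$ then yields both directions of each claimed equivalence.

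Concretely, for the $\mathcal{L}^{-}_{\alpha}$-case I would fix $u \in W_0^{1,2}(V)$ with $\int_V u^{\alpha+1}\, d\mu_g = 1$, write $A := \int_V |\nabla u|^2\, d\mu_g$ and $B := \int_V u^{2\alpha}\, d\mu_g$, and rearrange the definition into
\[
\mathcal{L}^{-}_{\alpha}(V,g,u,\tau) = A\,\tau^{\Gamma_\alpha+1} + \mathfrak{m}B\,\tau^{-\Gamma_\alpha} - \frac{2\Gamma_\alpha+1}{\Gamma_\alpha}\,\mathfrak{m}^{(\Gamma_\alpha+1)/(2\Gamma_\alpha+1)}\Sigma^{-}_{\alpha}.
\]
Since $\tau \mapsto A\tau^{\Gamma_\alpha+1} + \mathfrak{m}B\tau^{-\Gamma_\alpha}$ is strictly convex on $(0,\infty)$, its unique minimum can be read off by elementary calculus (or weighted AM--GM) and equals an explicit dimensional constant times $\mathfrak{m}^{(\Gamma_\alpha+1)/(2\Gamma_\alpha+1)}\bigl(AB^{(\Gamma_\alpha+1)/\Gamma_\alpha}\bigr)^{\Gamma_\alpha/(2\Gamma_\alpha+1)}$. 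A direct algebraic check from \eqref{def:gamma} gives the identity $(1-\gamma)/(\alpha\gamma) = (\Gamma_\alpha+1)/\Gamma_\alpha$, so under the chosen normalization $AB^{(\Gamma_\alpha+1)/\Gamma_\alpha}$ is exactly the scale-invariant Gagliardo--Nirenberg quotient defining $\mathbb{G}^{-}_{\alpha}(V,g)$. The defining formula for $\Sigma^{-}_{\alpha}$ is tailored precisely so that the subtracted constant in the display above cancels the minimum of $A\tau^{\Gamma_\alpha+1}+\mathfrak{m}B\tau^{-\Gamma_\alpha}$ exactly when this quotient attains the Euclidean sharp value; hence $\inf_{\tau>0}\mathcal{L}^{-}_{\alpha}(V,g,u,\tau)\ge 0$ if and only if the Gagliardo--Nirenberg quotient at $u$ is at least $\mathcal{N}_{\alpha,n}$. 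Taking the infimum over admissible $u$ then gives the first equivalence.

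The $\mathcal{L}^{+}_{\alpha}$-case is parallel: normalize by $\int_V u^{2\alpha}\,d\mu_g = 1$, set $C := \int_V u^{\alpha+1}\,d\mu_g$, minimize $A\tau^{1-2\Theta_\alpha} + \mathfrak{m}C\tau^{-\Theta_\alpha}$ in $\tau$, and use the analogous identity $2(1-\theta)/[\theta(\alpha+1)] = (1-2\Theta_\alpha)/\Theta_\alpha$ obtained from \eqref{def:theta} to recognize $AC^{(1-2\Theta_\alpha)/\Theta_\alpha}$ as the Gagliardo--Nirenberg quotient defining $\mathbb{G}^{+}_{\alpha}(V,g)$ under this normalization; $\Sigma^{+}_{\alpha}$ is tuned so the offset matches the minimum at $\mathcal{G}_{\alpha,n}$. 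For the Yamabe-type statement, replacing $A$ everywhere by $\tilde A := A + \frac{1}{2(\alpha+1)}\int_V \operatorname{Sc}\cdot u^2\,d\mu_g$ converts $\mathcal{L}^{\pm}_{\alpha}$ into $\mathcal{W}^{\pm}_{\alpha}$ and the $\mathbb{G}^{\pm}_{\alpha}$-quotient into the $\mathbb{Y}^{\pm}_{\alpha}$-quotient, so the same argument applies verbatim; the positivity of $\tilde A$ needed to perform the $\tau$-minimization is automatic whenever $\mathbb{Y}^{\pm}_{\alpha}(V,g) \ge \mathbb{G}^{\pm}_{\alpha}(\mathbb{R}^n,g_{\mathbb{R}^n}) > 0$, while in the converse direction $\tilde A \le 0$ for some admissible $u$ would force $\mathcal{W}^{\pm}_{\alpha}(V,g,u,\tau) \to -\infty$ as $\tau \to \infty$, contradicting $\mu^{\pm}_{\alpha}(V,g,\tau) \ge 0$ for all $\tau$.

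The main obstacle I anticipate is purely algebraic bookkeeping: verifying that the exponents produced by the $\tau$-minimization step match, via the identities relating $\gamma,\theta$ to $\Gamma_\alpha,\Theta_\alpha$, the exponents appearing in the Del Pino--Dolbeault closed-form formulas \eqref{def_optimal_const}, so that the specific constants $\Sigma^{\pm}_{\alpha}$ indeed correspond to the Euclidean sharp Gagliardo--Nirenberg values and not some renormalization thereof.
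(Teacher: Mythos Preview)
Your proposal is correct and follows essentially the same approach as the paper: minimize $A\tau^{p}+\mathfrak{m}B\tau^{-q}$ in $\tau$ via elementary calculus, invoke the exponent identities $\frac{\Gamma_{\alpha}+1}{\Gamma_{\alpha}}=\frac{1-\gamma}{\gamma\alpha}$ and $\frac{1-2\Theta_{\alpha}}{\Theta_{\alpha}}=\frac{2(1-\theta)}{\theta(1+\alpha)}$ to recognize the Gagliardo--Nirenberg quotient, and observe that $\Sigma^{\pm}_{\alpha}$ is calibrated to the Euclidean optimal constant. Your treatment is in fact more detailed than the paper's (which states only the calculus lemma and the two identities), and your handling of the sign of $\tilde A$ in the Yamabe case is a point the paper leaves implicit; the only minor imprecision is that when $\tilde A=0$ the functional tends to a negative constant rather than $-\infty$, but the contradiction still holds.
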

	\begin{proof}
		Note that $\frac{\Gamma_{\alpha}+1}{\Gamma_{\alpha}} = \frac{1-\gamma}{\gamma \alpha}$  and $\frac{1-2\Theta_{\alpha}}{\Theta_{\alpha}}=\frac{2(1-\theta)}{\theta(1+\alpha)}$.
		Theorem \ref{connection} follows from the 
		straightforward calculus exercise: if $A, B \geq 0$, then
		$$
		\inf _{x>0}\left\{A \tau^{p}+\mathfrak{m} B \tau^{-q}\right\}=\frac{\mathfrak{m}(p+q)}{p}\left(\frac{p }{\mathfrak{m} q}A B^{\frac{p}{q}}\right)^{\frac{q}{p+q}}
		$$
		for all $\tau>0$, with equality if and only if
		\begin{equation}\label{tau_connection}
			\tau=\left(\frac{\mathfrak{m} q}{p}\cdot\frac{ B}{ A}\right)^{\frac{1}{p+q}}.
		\end{equation}

	\end{proof}

	\section{Preliminaries for the calculation of Power series expansion formulas }	
	
	In this section, we present preliminary calculations for the power series expansion formulas of $\mathcal{L}^{\pm}_{\alpha}$-functionals and $\mathcal{W}^{\pm}_{\alpha}$-functionals.
	
	Here and below, we will use the following notation:
	for $p>0$
	\begin{equation}\label{def_mathcsr_B}
		\mathscr{B}(p,q):= 	\begin{cases}
			\mathrm{B}(p,q)  & \text{if } q>0,\\
			\mathrm{B}(p,-q-p+1) & \text{if } q<0 	 \text{ and } -p-q+1>0,	
		\end{cases}
	\end{equation}	
	where $\mathrm{B}(\cdot, \cdot)$ is the Euler beta-function.

	We will use the following identities in our calculations:
	When
	\begin{equation*}
		\begin{cases}
			q_1>-n, q_2>-1, & \text{if }	\alpha<1, \\
			q_1>-n, -q_2-\frac{n+q_1}{2}>0,	 & \text{if } \alpha>1,
		\end{cases}
	\end{equation*}
	we have 
	\begin{equation}\label{3.1_e0}
		\begin{aligned}
			& \int_{\mathbb{R}^n} |y|^{q_1}\left(1+\frac{(\alpha-1)|y|^2}{8}\right)_+^{q_2} dy^n \\ 
			&= \omega_{n-1}\int_0^{\infty} r^{n-1+q_1}\left(1+\frac{(\alpha-1)r^2}{8}\right)_+^{q_2} dr\\
			&\xlongequal{s=\frac{|\alpha-1|r^2}{8}}
			\begin{cases}
				\frac{\omega_{n-1}}{2} \left(\frac{1-\alpha}{8}\right)^{-\frac{n+q_1}{2}}\int_0^{1} s^{\frac{n+q_1}{2}-1}\left(1-s\right)_+^{q_2} ds, & \text{if } \alpha<1, q_1>-n, q_2>-1,\\
				\frac{\omega_{n-1}}{2} \left(\frac{\alpha-1}{8}\right)^{-\frac{n+q_1}{2}}\int_0^{\infty} s^{\frac{n+q_1}{2}-1}\left(1+s\right)^{q_2} ds, & \text{if } \alpha>1, q_1>-n, -q_2-\frac{n+q_1}{2}>0 , 			
			\end{cases}
			\\
			&= \frac{\omega_{n-1}}{2} \left(\frac{|\alpha-1|}{8}\right)^{-\frac{n+q_1}{2}} \mathscr{B}\left(\frac{n+q_1}{2},q_2+1\right),
		\end{aligned}
	\end{equation}
	where we have used
	\begin{equation}\label{beta_func_def}
		\text{ $\int_0^{1} s^{p-1}\left(1-s\right)^{q-1} ds=\mathrm{B}(p, q)$  and $\int_0^{\infty} s^{p-1}\left(1+s\right)^{-(p+q)} ds=\mathrm{B}(p, q)$ for $p>0$ and $q>0$.} 
	\end{equation}
	Here and in what follows, $\omega_{n-1}$ denotes the volume of the standard $(n-1)$-sphere.
	
	When
	\begin{equation*}
		\begin{cases}
			q_1>-n-2, q_2>-1, & \text{if }	\alpha<1, \\
			q_1>-n-2, -q_2-\frac{n+q_1+2}{2}>0,	 & \text{if } \alpha>1,
		\end{cases}
	\end{equation*}
	we have for any symmetric matrix $A_{ij}$:
	\begin{equation}\label{3.1_e11}
		\begin{aligned}
			&	\ \ \ \  \int_{\mathbb{R}^n} |y|^{q_1}\left(1+\frac{(\alpha-1)|y|^2}{8}\right)_+^{q_2}   A_{i j} y^i y^j dy^n \\ 
			&=  \int_0^{\infty} r^{n+1+q_1}\left(1+\frac{(\alpha-1)r^2}{8}\right)_+^{q_2} dr\int_{s^{n-1}(1)} A_{i j}  z^i z^j dz^{n-1} \\
			&\xlongequal{s=\frac{|\alpha-1|r^2}{8}}
			\begin{cases}
				\frac{\omega_{n-1}}{2n} \left(\frac{1-\alpha}{8}\right)^{-\frac{n+q_1+2}{2}}\operatorname{tr}(A)\int_0^{1} s^{\frac{n+q_1+2}{2}-1}\left(1-s\right)_+^{q_2} ds, & \text{if } \alpha<1, q_1>-n-1, q_2>-1,\\
				\frac{\omega_{n-1}}{2n} \left(\frac{\alpha-1}{8}\right)^{-\frac{n+q_1+2}{2}}\operatorname{tr}(A)\int_0^{\infty} s^{\frac{n+q_1+2}{2}-1}\left(1+s\right)^{q_2} ds, & \text{if } \alpha>1, q_1>-n-1, -q_2-\frac{n+q_1+2}{2}>0,  		
			\end{cases}
			\\
			&= 		\frac{\omega_{n-1}}{2n} \left(\frac{|\alpha-1|}{8}\right)^{-\frac{n+q_1+2}{2}} \mathscr{B}(\frac{n+q_1+2}{2},q_2+1) \operatorname{tr}(A),
		\end{aligned}
	\end{equation}
	where we have used \eqref{beta_func_def} and
	$$
	\int_{s^{n-1}(1)}A_{i j}  z^i z^j =\sum_{i=1} A_{i i} \int_{s^{n-1}(1)}  (z^i)^2dz^{n-1}=\frac{\int_{s^{n-1}(1)}  \sum\limits^n_{i=1}(z^i)^2dz^{n-1}}{n}\operatorname{tr}(A)=\frac{\omega_{n-1}}{n} \operatorname{tr}(A).
	$$
	Moreover,
	when
	\begin{equation*}
		\begin{cases}
			q_1>-n-4, q_2>-1, & \text{if }	\alpha<1, \\
			q_1>-n-4, -q_2-\frac{n+q_1+4}{2}>0,	 & \text{if } \alpha>1,
		\end{cases}
	\end{equation*} 
	we have for any four tensor $\lambda_{ijkl}$:	
	\begin{equation}\label{3.1_e21}
		\begin{aligned}
			&	\ \ \ \  \int_{\mathbb{R}^n} |y|^{q_1}\left(1+\frac{(\alpha-1)|y|^2}{8}\right)_+^{q_2}   \lambda_{i j k l} y^i y^j y^k y^l dy^n \\ 
			&=  \int_0^{\infty} r^{n+3+q_1}\left(1+\frac{(\alpha-1)r^2}{8}\right)_+^{q_2} dr\int_{s^{n-1}(1)}  \lambda_{i j k} z^i z^j z^{k } z^ldz^{n-1} \\
			&\xlongequal{s=\frac{|\alpha-1|r^2}{8}}
			\begin{cases}
				\frac{\omega_{n-1}}{2n(n+2)} \left(\frac{1-\alpha}{8}\right)^{-\frac{n+q_1+4}{2}}\operatorname{E}(\lambda)\int_0^{1} s^{\frac{n+q_1+4}{2}-1}\left(1-s\right)_+^{q_2} ds, & \text{if } \alpha<1,   q_1>-(n+4),  q_2>-1,\\
				\frac{\omega_{n-1}}{2n(n+2)} \left(\frac{\alpha-1}{8}\right)^{-\frac{n+q_1+4}{2}}\operatorname{E}(\lambda)\int_0^{\infty} s^{\frac{n+q_1+4}{2}-1}\left(1+s\right)^{q_2} ds, & \text{if } \alpha>1, q_1>-(n+4),  -q_2-\frac{n+q_1+4}{2}>0,
			\end{cases}
			\\
			&= 		\frac{\omega_{n-1}}{2n(n+2)} \left(\frac{|\alpha-1|}{8}\right)^{-\frac{n+q_1+4}{2}}\mathscr{B}(\frac{n+q_1+4}{2},q_2+1)\operatorname{E}(\lambda),
		\end{aligned}
	\end{equation}
	where 
	\begin{equation}\label{def_of_E}
		\operatorname{E}(\lambda):= \sum\limits_{i j=1}^n\left(\lambda_{i i j j}+\lambda_{i j i j}+\lambda_{i j j i}\right),
	\end{equation}
	we have used \eqref{beta_func_def} and
	$$
	\begin{aligned}
		&\ \ \ \int_{S^{n-1}(1)} \lambda_{i j k l}  z^i z^j z^k z^l dz^{n-1}\\
		& =\frac{\omega_{n-1}}{n(n+2)}\left\{3 \sum_{i=1}^n \lambda_{i i i i}+\sum_{i \neq j}\left(\lambda_{i i j j}+\lambda_{i j i j}+\lambda_{i j j i}\right)\right\}\\
		& =\frac{\omega_{n-1}}{n(n+2)} \operatorname{E}(\lambda) ,
	\end{aligned}
	$$
	since 
	$\int_{S^{n-1}(1)} z_i^4 dz^{n-1}=3 \int_{S^{n-1}(1)} z_i^2 z_j^2 dz^{n-1}=\frac{3 \pi^{n / 2}}{(n+2)\Gamma(\frac{n}{2} +1)}$ for $i\ne j$ (c.f. (A.4) and (A.5) in \cite{G} )  and 
	each the integral of which $a_i$ appears for odd times is zero because the integral over one hemisphere cancels the integral over the other.

	Next, we derive the following lemma, which is required for calculating each term in the power series expansions of the $\mathcal{L}^{\pm}_{\alpha}$-functionals and $\mathcal{W}^{\pm}_{\alpha}$-functionals.
	
	\begin{lem}\label{key_lemma}
		Let $(M^n,g)$ be a Riemannian manifold of dimension $n\geq 3$ and $p\in V\subset\mathring{M^n}$, where $V$ is a neighborhood of $p$. 
		Denote the function $\operatorname{H}:\mathbb{R}_+\to \mathbb{R}_+$:
		$$\operatorname{H}(r)=\left(1+ \frac{(\alpha-1)r^2}{8}\right)_+^{\frac{1}{1-\alpha}}.$$  
		Take the function $\xi(x,t)$ on $M$ such that $p \in \operatorname{supp}(\eta) \subset\subset V$, which has the following expansion around $p$:
		\begin{equation}\label{expansion_of_xi}
			\begin{aligned}
				&\xi^2(x,t)=\sum_{k=0}^2 \phi_k(x)t^k + o(t^2),\\
				&\phi_0(x)=1+\textbf{a}_{ij}x^ix^j+e_{ijk}x^ix^jx^k+b_{ijkl}x^ix^jx^kx^l+o(|x|^4),\\
				&\phi_1(x) =\beta_1+q_ix^i+d_{ij}x^ix^j+o(|x|^2),\\
				&\phi_2(x)=\beta_2+o(1),
			\end{aligned}
		\end{equation}
		where $\{x^k\}^n_{k=1}$ is the normal coordinates of $T_pM$.
		
		\noindent(i)
		When  either $0<\alpha<1$, or ($1<\alpha\le \frac{n}{n-2}$ and $\frac{m}{\alpha-1}-\frac{n}{2}-1>0$), we have
		\begin{equation}\label{expansion_D1st}
			t^{-\frac{n}{2}}  \int_V 
			\left( \operatorname{H}(\frac{d(p,x)}{\sqrt{t}})\xi(x,t)\right)^m d\mu=D_0(m)+D_1(m)t+o(t),
		\end{equation}
		Moreover, when either $0<\alpha<1$, or ($1<\alpha\le \frac{n}{n-2}$ and $\frac{m}{\alpha-1}-\frac{n}{2}-2>0$),  we have
		\begin{equation}\label{expansion_D2rd}
			t^{-\frac{n}{2}}  \int_V 
			\left( \operatorname{H}(\frac{d(p,x)}{\sqrt{t}})\xi(x,t)\right)^m d\mu=D_0(m)+D_1(m)t+D_2(m)t^2+o(t^2),
		\end{equation}
		where
		$$
		D_0(m)=\frac{\omega_{n-1}}{2}\left(\frac{|\alpha-1|}{8}\right)^{-\frac{n}{2}}\mathscr{B}(\frac{n}{2},\frac{m}{1-\alpha}+1),
		$$
		$$
		\frac{D_1(m)}{D_0(m)}=\frac{m}{2}\beta_1+\frac{m}{2n}\left(\frac{|\alpha-1|}{8}\right)^{-1}\frac{\mathscr{B}(\frac{n}{2}+1,\frac{m}{1-\alpha}+1)}{\mathscr{B}(\frac{n}{2},\frac{m}{1-\alpha}+1)}\operatorname{tr}(\textbf{a})-\frac{1}{6n}
		\left(\frac{|\alpha-1|}{8}\right)^{-1}\frac{\mathscr{B}(\frac{n}{2}+1,\frac{m}{1-\alpha}+1)}{\mathscr{B}(\frac{n}{2},\frac{m}{1-\alpha}+1)}\operatorname{Sc}(p),
		$$
		$$
		\begin{aligned}
			\frac{D_2(m)}{D_0(m)}&=\frac{1}{n(n+2)}	\left(\frac{|\alpha-1|}{8}\right)^{-2}\frac{\mathscr{B}(\frac{n}{2}+2,\frac{m}{1-\alpha}+1)}{\mathscr{B}(\frac{n}{2},\frac{m}{1-\alpha}+1)}\left[\frac{m}{2}\operatorname{E}(b)+\frac{m(m-2)}{8}\operatorname{E}(\textbf{a} \otimes \textbf{a})+\operatorname{E}(v)-\frac{m}{12}\operatorname{E}(\textbf{a} \otimes \operatorname{Rc})\right]\\
			&+\frac{1}{n}	\left(\frac{|\alpha-1|}{8}\right)^{-1}\frac{\mathscr{B}(\frac{n}{2}+1,\frac{m}{1-\alpha}+1)}{\mathscr{B}(\frac{n}{2},\frac{m}{1-\alpha}+1)}\left[\frac{m}{2}\operatorname{tr}(d)+\frac{m(m-2)}{4}\beta_1 \operatorname{tr}(\textbf{a})-\frac{m}{12}\beta_1 \operatorname{Sc}(p)\right]\\
			&+\left(\frac{m}{2}\beta_2+\frac{m(m-2)}{8}\beta_1^2\right),
		\end{aligned}
		$$
		where $\mathscr{B}(p,q)$ is defined in \eqref{def_mathcsr_B}, $\operatorname{E}$ is defined in \eqref{def_of_E} and  $v$ is the tensor defined as
		\begin{equation}\label{def_v}
			v_{ijkl}=\frac{1}{24}\left(-\frac{3}{5} \nabla_k \nabla_l R_{ij}-\frac{2}{15}\sum\limits_{s, t=1}^n R_{isjt}R_{kslt}+\frac{1}{3} R_{ij} R_{k l}\right)(p). 
		\end{equation}
		
		\noindent(ii)  When  either $0<\alpha<1$, or ($1<\alpha\le \frac{n}{n-2}$ and $\frac{2\alpha}{\alpha-1}-\frac{n}{2}-2>0$), we have
		\begin{equation}\label{expansion_A1st}
			t^{1-\frac{n}{2}}  \int_V \left|\nabla
			\left( \operatorname{H}(\frac{d(p,x)}{\sqrt{t}})\xi(x,t)\right)\right|^2 d\mu  =A_0+A_1t+o(t)	,
		\end{equation}
		Moreover,  when  either $0<\alpha<1$, or ($1<\alpha\le \frac{n}{n-2}$ and $\frac{2\alpha}{\alpha-1}-\frac{n}{2}-3>0$), we have
		\begin{equation}\label{expansion_A2rd}
			t^{1-\frac{n}{2}}  \int_V \left|\nabla
			\left( \operatorname{H}(\frac{d(p,x)}{\sqrt{t}})\xi(x,t)\right)\right|^2 d\mu  =A_0+A_1t+A_2t^2+o(t^2)	,
		\end{equation}
		where
		$$
		A_0=\frac{\omega_{n-1}}{32}\left(\frac{|\alpha-1|}{8}\right)^{-\frac{n+2}{2}}\mathscr{B}(\frac{n}{2}+1,\frac{2\alpha}{1-\alpha}+1),
		$$
		$$
		\frac{A_1}{A_0}=\frac{1}{n}\left(\frac{|\alpha-1|}{8}\right)^{-1}	\frac{\mathscr{B}(\frac{n}{2}+2,\frac{2\alpha}{1-\alpha}+1)}{\mathscr{B}(\frac{n}{2}+1,\frac{2\alpha}{1-\alpha}+1)}\left(\operatorname{tr}(\textbf{a})-\frac{1}{6}\operatorname{Sc}(p)\right)-\frac{8}{n}	\frac{\mathscr{B}(\frac{n}{2}+1,\frac{2\alpha}{1-\alpha}+2)}{\mathscr{B}(\frac{n}{2}+1,\frac{2\alpha}{1-\alpha}+1)}\operatorname{tr}(\textbf{a})+\beta_1, 
		$$
		$$
		\begin{aligned}
			\frac{A_2}{A_0}&=\frac{1}{n(n+2)}	\left(\frac{|\alpha-1|}{8}\right)^{-2}\frac{\mathscr{B}(\frac{n}{2}+3,\frac{2\alpha}{1-\alpha}+1)}{\mathscr{B}(\frac{n}{2}+1,\frac{2\alpha}{1-\alpha}+1)}\left(\operatorname{E}(b)+\operatorname{E}(v)-\frac{1}{6}\operatorname{E}(\textbf{a} \otimes \operatorname{Rc})\right)\\
			&+\frac{1}{n}\left(\frac{|\alpha-1|}{8}\right)^{-1}	\frac{\mathscr{B}(\frac{n}{2}+2,\frac{2\alpha}{1-\alpha}+1)}{\mathscr{B}(\frac{n}{2}+1,\frac{2\alpha}{1-\alpha}+1)}\left(\operatorname{tr}(d)-\frac{1}{6}\beta_1\operatorname{Sc}(p)\right)+\beta_2+\frac{16}{n}	\frac{\mathscr{B}(\frac{n}{2}+1,\frac{2}{1-\alpha}+1)}{\mathscr{B}(\frac{n}{2}+1,\frac{2\alpha}{1-\alpha}+1)}\operatorname{tr}(\textbf{a}^2)\\
			&	\quad -\frac{8}{n}	\frac{\mathscr{B}(\frac{n}{2}+1,\frac{2\alpha}{1-\alpha}+2)}{\mathscr{B}(\frac{n}{2}+1,\frac{2\alpha}{1-\alpha}+1)}\operatorname{tr}(d)+
			\frac{1}{n(n+2)}	\left(\frac{|\alpha-1|}{8}\right)^{-1}\frac{\mathscr{B}(\frac{n}{2}+2,\frac{2\alpha}{1-\alpha}+2)}{\mathscr{B}(\frac{n}{2}+1,\frac{2\alpha}{1-\alpha}+1)}\left(-16\operatorname{E}(b)+\frac{4}{3}\operatorname{E}(\textbf{a} \otimes \operatorname{Rc})\right), 
		\end{aligned}
		$$
		
		\noindent(iii) When either $0<\alpha<1$, or ($1<\alpha\le \frac{n}{n-2}$ and $\frac{2\alpha}{\alpha-1}-\frac{n}{2}-1>0$), we have
		\begin{equation}
			\begin{aligned}
				\begin{aligned}
					&t^{1-\frac{n}{2}}  \int_V \operatorname{Sc}(x)\cdot  \left( \operatorname{H}(\frac{d(p,x)}{\sqrt{t}})\xi(x,t)\right)^2  d\mu  \\ =&\frac{\omega_{n-1}}{2}\left(\frac{|\alpha-1|}{8}\right)^{-\frac{n}{2}}\mathscr{B}(\frac{n}{2},\frac{2}{1-\alpha}+1)\times\\
					&\Big\{\operatorname{Sc}(p)t+\frac{1}{n}\left(\frac{|\alpha-1|}{8}\right)^{-1}	\frac{\mathscr{B}(\frac{n}{2}+1,\frac{2}{1-\alpha}+1)}{\mathscr{B}(\frac{n}{2},\frac{2}{1-\alpha}+1)}\left(\frac{1}{2}\Delta \operatorname{Sc}(p)-\frac{1}{6}\operatorname{Sc}^2(p)+\operatorname{tr}(\textbf{a})\operatorname{Sc}(p)\right)t^2+\beta_1 \operatorname{Sc}(p)t^2+o(t^2)\Big\}	,
				\end{aligned}
			\end{aligned}
		\end{equation}
	\end{lem}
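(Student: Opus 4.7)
The plan is to compute each of the three integrals by passing to geodesic normal coordinates at $p$, rescaling $x=\sqrt{t}\,y$, and Taylor-expanding every ingredient in powers of $\sqrt{t}$; each order then reduces to a radial-times-polynomial integral on $\mathbb{R}^n$ that is evaluated by the identities \eqref{3.1_e0}--\eqref{3.1_e21}. In normal coordinates one has $d(p,x)=|x|$, the standard expansion
$$\sqrt{\det g}(x) = 1 - \tfrac{1}{6}R_{ij}(p)\,x^ix^j - \tfrac{1}{12}\nabla_k R_{ij}(p)\,x^ix^jx^k + v_{ijkl}\,x^ix^jx^kx^l + O(|x|^5)$$
with $v$ matching \eqref{def_v}, together with $g^{ij}(x)=\delta^{ij}+\tfrac{1}{3}R_{ikl j}(p)x^kx^l+O(|x|^3)$ and $\operatorname{Sc}(x)=\operatorname{Sc}(p)+\nabla_k\operatorname{Sc}(p)x^k+\tfrac12\nabla_k\nabla_l\operatorname{Sc}(p)x^kx^l+O(|x|^3)$. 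After the substitution $y=x/\sqrt{t}$, the factor $\operatorname{H}(d(p,x)/\sqrt{t})$ becomes $\operatorname{H}(|y|)$ and $d\mu_g = t^{n/2}\sqrt{\det g}(\sqrt{t}\,y)\,dy^n$, which absorbs the prefactor $t^{-n/2}$ (resp.\ $t^{1-n/2}$) in part (i) (resp.\ (ii), (iii)). Since $\xi$ is compactly supported in $V$ and $\operatorname{H}$ is radial, odd-degree-in-$y$ pieces (the cubic coefficient $e_{ijk}$ in \eqref{expansion_of_xi}, the linear coefficient $q_i$, and the volume term $\nabla_k R_{ij}$) drop out upon integration.

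To extract $D_0,D_1,D_2$ for part (i), I would use the binomial identity
$$\xi^m = \phi_0^{m/2}+\tfrac{m}{2}\phi_0^{m/2-1}\phi_1\,t+\bigl(\tfrac{m}{2}\phi_0^{m/2-1}\phi_2+\tfrac{m(m-2)}{8}\phi_0^{m/2-2}\phi_1^2\bigr)t^2+o(t^2),$$
further expand $\phi_0^{m/2}=1+\tfrac{m}{2}\textbf{a}_{ij}x^ix^j+\tfrac{m}{2}b_{ijkl}x^ix^jx^kx^l+\tfrac{m(m-2)}{8}(\textbf{a}_{ij}x^ix^j)^2+\cdots$ by the binomial series, multiply in $\sqrt{\det g}$, substitute $x=\sqrt{t}\,y$, and collect terms of order $t^0$, $t^1$, $t^2$. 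Each surviving integral has the form $\int_{\mathbb{R}^n}|y|^{q_1}\operatorname{H}(|y|)^m\cdot(\text{tensor})_{i_1\cdots i_{2k}}y^{i_1}\cdots y^{i_{2k}}\,dy$ with $q_1\in\{0,2,4\}$, which \eqref{3.1_e0}--\eqref{3.1_e21} evaluate as beta functions times $\operatorname{tr}$ or the operator $\operatorname{E}$ from \eqref{def_of_E}. Regrouping and factoring out $D_0(m)$ yields the stated ratios $D_1(m)/D_0(m)$ and $D_2(m)/D_0(m)$. The convergence condition on each beta factor is exactly $-\frac{m}{1-\alpha}-\frac{n+q_1}{2}>0$ when $\alpha>1$, i.e.\ $\frac{m}{\alpha-1}-\frac{n}{2}-k>0$ with $k$ the maximum $q_1/2$ reached, matching the hypotheses for the first- and second-order expansions.

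For part (ii), I would use $\nabla(\operatorname{H}\xi)=\xi\nabla\operatorname{H}+\operatorname{H}\nabla\xi$ together with the chain rule $\partial_i\operatorname{H}(|x|/\sqrt{t})=t^{-1/2}\operatorname{H}'(|y|)y^i/|y|$ and the inverse-metric expansion, noting that the radial weight satisfies the homogeneity identity $(\operatorname{H}'(|y|))^2|y|^{-2}=\tfrac{1}{16}\operatorname{H}(|y|)^{2\alpha/(\alpha-1)}$, so that $|\nabla\operatorname{H}|^2$ contributes a factor $\operatorname{H}^{2\alpha/(1-\alpha)}$ with weight $q_1=2$. Running the same machinery then gives $A_0,A_1,A_2$, with the convergence threshold now $\frac{2\alpha}{\alpha-1}-\frac{n}{2}-k>0$ because the exponent of $\operatorname{H}$ is $\frac{2\alpha}{1-\alpha}$ instead of $\frac{m}{1-\alpha}$. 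Part (iii) is the $m=2$ case of (i) with the extra factor $\operatorname{Sc}(x)$ expanded to second order; the Laplacian $\Delta\operatorname{Sc}(p)=\operatorname{tr}\nabla^2\operatorname{Sc}(p)$ emerges automatically from the integration of $\tfrac12\nabla_k\nabla_l\operatorname{Sc}(p)x^kx^l$ against the radial weight via \eqref{3.1_e11}.

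The main obstacle is the bookkeeping for $D_2$ and $A_2$: four distinct fourth-order sources have to be kept separate through the $\operatorname{E}$-evaluation, namely the quartic coefficient $b_{ijkl}$ of $\phi_0$, the quadratic-squared term $\textbf{a}\otimes\textbf{a}$ from the binomial expansion of $\phi_0^{m/2}$, the mixed term $\textbf{a}\otimes\operatorname{Rc}$ coming from multiplying the quadratic pieces of $\phi_0^{m/2}$ and $\sqrt{\det g}$, and the volume tensor $v$; these combine with order-$q_1=2$ contributions involving $\phi_1,\phi_2$ and, for (ii), an additional $\operatorname{tr}(\textbf{a}^2)$ piece produced by the cross product between $|\nabla\operatorname{H}|^2$ and the second-order expansion of $\phi_0$. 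Carefully identifying which $(q_1,q_2)$ pair each summand produces and tracking the resulting beta ratios $\mathscr{B}(\tfrac{n}{2}+k_1,\tfrac{m}{1-\alpha}+k_2)/\mathscr{B}(\tfrac{n}{2},\tfrac{m}{1-\alpha}+1)$ is essentially the entire content of the proof; the convergence hypotheses on $\alpha$ and $m$ are then dictated term by term by the applicability of \eqref{3.1_e0}--\eqref{3.1_e21}.
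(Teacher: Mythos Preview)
Your proposal is correct and follows essentially the same approach as the paper: pass to normal coordinates, expand $\xi^m\sqrt{\det g}$ (respectively the three pieces of $|\nabla(\operatorname{H}\xi)|^2$) in powers of $|x|$ and $t$, rescale $y=x/\sqrt t$, and evaluate each surviving even moment via \eqref{3.1_e0}--\eqref{3.1_e21}. Two small points to fix when you write it out: the identity should read $(\operatorname{H}'(r))^2=\tfrac{r^2}{16}\operatorname{H}(r)^{2\alpha}$ (the exponent is $2\alpha$, not $2\alpha/(\alpha-1)$; the quantity $\tfrac{2\alpha}{1-\alpha}$ is the resulting second beta argument), and for $\alpha>1$ you must also justify that replacing the integral over the (rescaled) compact support of $\xi$ by the integral over all of $\mathbb{R}^n$ introduces only an $o(t^k)$ error---the paper does this by a direct tail bound using $\operatorname{H}(r)^m\le C\,r^{2m/(1-\alpha)}$ for large $r$, which is exactly where the hypotheses $\tfrac{m}{\alpha-1}-\tfrac{n}{2}-k>0$ enter.
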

	\begin{proof} \textbf{(i)} 
		Let $\Sigma_p\subset T_pM$ be the segment domain of $p$, and let $\operatorname{det}\left(g_{k \ell}(x)\right) := 0$ and $\xi^2(x,t):= 0$ outside $\Sigma_p$.
		Recall that in the normal coordinates $\{x^k\}^n_{k=1}$ of $T_pM$,  $\operatorname{det}\left(g_{k \ell}(x)\right) $ has the following  power series expansion near $p$ (see Lemma 3.4 in p. 210 of \cite{STbook})
		\begin{equation}\label{expansion_detg}
			\begin{aligned}
				\operatorname{det}\left(g_{k \ell}(x)\right)^\frac{1}{2} &  =1-\frac{1}{6} R_{ij}(p) x^i x^j-\frac{1}{12} \nabla_k R_{ij}(p) x^i x^j x^k +v_{ijkl} x^i x^j x^k x^l  +O\left(|x|^5\right),
			\end{aligned}
		\end{equation}	
		where $v$ is the tensor defined in \eqref{def_v}. 
		By \eqref{expansion_of_xi} and \eqref{expansion_detg}, we have 
		\begin{equation}\label{key_lemma_term1}
			\begin{aligned}
				&\ \ \ \ \xi^m(x,t)\operatorname{det}\left(g_{k \ell}(x)\right)^\frac{1}{2}\\
				=&\left\{1+\frac{m}{2}\left(\textbf{a}_{ij}x^ix^j+\beta_1t\right)+\frac{m}{2}\left(e_{ijk}x^ix^jx^k+q_i x^it\right)+\frac{m}{2}\left(b_{ijkl}x^ix^jx^kx^l+d_{ij} x^ix^jt+\beta_2 t^{2}\right)\right.\\
				&\left.+\frac{m(m-2)}{8}\left(\beta_1^2t^2+2\beta_1 \textbf{a}_{ij}x^ix^jt+\textbf{a}_{ij}\textbf{a}_{kl}x^ix^jx^kx^l\right)+o(|x|^2)t+o(|x|^4)+o(1)t^2\right\}\times\\
				&\left\{1-\frac{1}{6} R_{ij}(p) x^i x^j-\frac{1}{12} \nabla_k R_{ij}(p) x^i x^j x^k +v_{ijkl} x^i x^j x^k x^l +o(|x|^4) \right\}\\
				&=Q_{m,1}+Q_{m,2}+P,
			\end{aligned}
		\end{equation}	
		where
		\begin{align}
			Q_{m,1}:=&1+\left(\frac{m}{2}\textbf{a}_{ij}-\frac{1}{6} R_{ij}(p) \right)x^i x^j+\frac{m}{2}\beta_1t,\label{def_Q_1}\\
			Q_{m,2}:=&  \left\{\frac{m}{2}\left(e_{ijk}x^ix^jx^k+q_i x^it\right)-\frac{1}{12} \nabla_k R_{ij}(p) x^i x^j x^k\right\}\nonumber\\
			& +\left\{\left(\frac{m}{2}b_{ijkl}+\frac{m(m-2)}{8}\textbf{a}_{ij}\textbf{a}_{kl}+v_{ijkl}-\frac{m}{12}\textbf{a}_{ij} R_{kl}(p)\right)x^ix^jx^kx^l\right.\nonumber\\
			&\left.\ \ \ \ +\left(\frac{m}{2}d_{ij}+\frac{m(m-2)}{4}\beta_1 \textbf{a}_{ij}-\frac{m}{12}\beta_1R_{ij}(p)\right) x^ix^jt +\left(\frac{m}{2}\beta_2+\frac{m(m-2)}{8}\beta_1^2\right) t^2 \right\} ,\label{def_Q_2}\\
			P  :=& o(|x|^2)t+o(|x|^4)+o(1)t^2.\nonumber
		\end{align}

		Denote $$W:= \Sigma_p \cap exp^{-1}_p \left(V\cap \operatorname{supp}\{\xi\}\right),$$ we have
		\begin{equation}\label{term1}
			\begin{aligned}
				&\quad   t^{-\frac{n}{2}}  \int_V 
				\left( \operatorname{H}(\frac{d(p,x)}{\sqrt{t}})\xi(x,t)\right)^m d\mu\\
				&= t^{-\frac{n}{2}}  \int_{W} 
				\operatorname{H}^m(\frac{|x|}{\sqrt{t}}) \xi^m(x,t)\operatorname{det}\left(g_{k \ell}(x)\right)^\frac{1}{2}dx^n\\
				&= t^{-\frac{n}{2}}  \int_{W} 
				\operatorname{H}^m(\frac{|x|}{\sqrt{t}}) \left(Q_{m,1}+Q_{m,2}+P\right)dx^n.
			\end{aligned}
		\end{equation}
		
		\textbf{Next, we calculate each term of \eqref{term1} separately.}
		
		When $0 < \alpha < 1$, and noting that $\operatorname{H}\left(\frac{d(p,x)}{\sqrt{t}}\right) = 0$ if $d(p,x)^2 \geq \frac{8}{1-\alpha}t$, we have for sufficiently small $t$ and any smooth function $f$ on $M$:
		\begin{equation}\label{alphal1}
			\int_{W} \operatorname{H}^m\left(\frac{|x|}{\sqrt{t}}\right) f dx^n = \int_{T_pM} \operatorname{H}^m\left(\frac{|x|}{\sqrt{t}}\right) f dx^n.
		\end{equation}

		When $1 < \alpha \leq \frac{n}{n-2}$, taking some ball $B(o,r_0) \subset W$, since
		\begin{equation}\label{estimate_Q}
			\begin{aligned}
				&\quad t^{-\frac{n}{2}}\int_{T_pM \setminus W} \operatorname{H}^m\left(\frac{|x|}{\sqrt{t}}\right) |x|^k \, dx^n \\
				&\le t^{-\frac{n}{2}}\int_{T_pM \setminus B(o,r_0)} \operatorname{H}^m\left(\frac{|x|}{\sqrt{t}}\right) |x|^k \, dx^n \\
				&\leq \left(\frac{\alpha-1}{8}\right)^{\frac{m}{1-\alpha}} t^{\frac{m}{\alpha-1} - \frac{n}{2}} \int_{T_pM \setminus B(o,r_0)} |x|^{\frac{2m}{1-\alpha} + k} \, dx^n \\
				&= \left(\frac{\alpha-1}{8}\right)^{\frac{m}{1-\alpha}} \omega_{n-1} t^{\frac{m}{\alpha-1} - \frac{n}{2}} \int^{\infty}_{r_0} r^{\frac{2m}{1-\alpha} + n + k - 1} \, dr \\
				&= o(t^{\frac{k}{2}}) \quad \text{if } \frac{m}{\alpha-1} - \frac{n}{2} - \frac{k}{2} > 0,
			\end{aligned}
		\end{equation}
		we have
		\begin{equation}\label{estimate_Q_1}
			\begin{aligned}
				&\quad \left| t^{-\frac{n}{2}} \int_{T_pM \setminus W} \operatorname{H}^m\left(\frac{|x|}{\sqrt{t}}\right) Q_{m,1} \, dx^n \right| \\
				&\leq l_1 \left| t^{-\frac{n}{2}} \int_{T_pM \setminus B(o,r_0)} \left(1 + \frac{(\alpha-1)|x|^2}{8t}\right)^{\frac{m}{1-\alpha}} |x|^2 \, dx^n \right| \\
				&= o(t) \quad \text{ if } \frac{m}{\alpha-1} - \frac{n}{2} - 1 > 0,
			\end{aligned}
		\end{equation}
		and		
		\begin{equation}\label{estimate_Q_2}
			\begin{aligned}
				&\quad \left| t^{-\frac{n}{2}}\int_{T_pM\backslash W} \operatorname{H}^m(\frac{|x|}{\sqrt{t}})  \left(Q_{m,1}+Q_{m,2}\right) dx^n\right|\\
				&\le l_2\left| t^{-\frac{n}{2}}\int_{T_pM\backslash B(o,r_0)} \left(1+ \frac{(\alpha-1)|x|^2}{8t}\right)^{\frac{m}{1-\alpha}}|x|^4 dx^n\right|\\
				&= o(t^2) \quad \text{ if }\frac{m}{\alpha-1}-\frac{n}{2}-2>0,
			\end{aligned}
		\end{equation}
		where  we have used
		\begin{equation}\label{estimate_forQ}
			Q_{m,1}(x,t)\leq l_1|x|^2 \text{ and } Q_{m,1}(x,t)+Q_{m,2}(x,t)\le l_2|x|^4 \text{ for } x\in  T_pM\backslash B(o,r_0), 0<t\le 1,
		\end{equation}
		where $l_1$ and $l_2$ is a constant depending on $r_0$, $\alpha$, $n$, $m$,  $|\nabla^k\operatorname{\operatorname{Rm}}|(p)$ for $k=0,1,2$, and the expansion coefficients of $\xi^2$ in \eqref{expansion_of_xi}.
		
		By (\ref{3.1_e0}) and (\ref{3.1_e11}) with $q_1=0$ and $q_2=\frac{m}{1-\alpha}$, \eqref{def_Q_1}, \eqref{alphal1} and \eqref{estimate_Q_1}, we get
		for either $0<\alpha<1$ or ($1<\alpha\le \frac{n}{n-2}$ and $\frac{m}{\alpha-1}-\frac{n}{2}-1>0$):
		\begin{equation}\label{D1st_1}
			\begin{aligned}
				&\quad   t^{-\frac{n}{2}}\int_{ W} \operatorname{H}^m(\frac{|x|}{\sqrt{t}})Q_{m,1} dx^n\\
				&= t^{-\frac{n}{2}}\left(\int_{T_pM}-\int_{T_pM\backslash W}\right)\operatorname{H}^m(\frac{|x|}{\sqrt{t}})Q_{m,1} dx^n\\
				&\xlongequal{y=\frac{x}{\sqrt{t}}}\int_{ T_pM}  \left(1+ \frac{(\alpha-1)|y|^2}{8}\right)_+^{\frac{m}{1-\alpha}} Q_{m,1}(t^{\frac{1}{2}}y,t)dy^n+o(t)\\
				&=D_0(m)\left(1+\frac{D_1(m)}{D_0(m)}t\right)+o(t).
			\end{aligned}
		\end{equation}

		Moreover, by  (\ref{3.1_e0}), \eqref{3.1_e11} and (\ref{3.1_e21}) with $q_1=0$ and $q_2=\frac{m}{1-\alpha}$, \eqref{def_Q_2}, \eqref{alphal1}  and \eqref{estimate_Q_2}, we get
		for  either $0<\alpha<1$ or ($1<\alpha\le \frac{n}{n-2}$ and $\frac{m}{\alpha-1}-\frac{n}{2}-2>0$):
		\begin{equation}\label{D2rd_1}
			\begin{aligned}
				&\quad   t^{-\frac{n}{2}}\int_{ W} \operatorname{H}^m(\frac{|x|}{\sqrt{t}})\left(Q_{m,1}+Q_{m,2}\right) dx^n\\
				&= t^{-\frac{n}{2}}\left(\int_{T_pM}-\int_{T_pM\backslash W}\right)\operatorname{H}^m(\frac{|x|}{\sqrt{t}})\left(Q_{m,1}+Q_{m,2}\right) dx^n\\
				&\xlongequal{y=\frac{x}{\sqrt{t}}}\int_{ T_pM}  \left(1+ \frac{(\alpha-1)|y|^2}{8}\right)_+^{\frac{m}{1-\alpha}} \left(Q_{m,1}(t^{\frac{1}{2}}y,t)+Q_{m,2}(t^{\frac{1}{2}}y,t)\right)dy^n+o(t^2)\\
				&=D_0(m)\left(1+\frac{D_1(m)}{D_0(m)}t+\frac{D_2(m)}{D_0(m)}t^2\right)+o(t^2)
			\end{aligned}
		\end{equation}
		where we have used that $\int_{ T_pM} \left(1+ \frac{(\alpha-1)|y|^2}{8}\right)_+^{\frac{1}{1-\alpha}} \left\{\frac{m}{2}\left(e_{ijk}y^iy^jy^k+q_i y^i\right)-\frac{1}{12} \nabla_k R_{ij}(p) y^i y^j y^k\right\}t^{\frac{3}{2}}dy^n=0$ since
		the integral over one hemisphere cancels the integral over the other. 
		
		For any function $F(x)=o(|x|^k)$ and $F(x)$  bounded on $W$,
		we have
		\begin{equation}\label{reason}
			\begin{aligned}
				&\quad   \left|t^{-\frac{n}{2}}\int_{ W} \operatorname{H}^m(\frac{|x|}{\sqrt{t}})F(x)dx^n \right|\\
				&\xlongequal{y=\frac{x}{\sqrt{t}}} \left|\int_{ t^{-\frac{1}{2}}W}  \left(1+ \frac{(\alpha-1)|y|^2}{8}\right)_+^{\frac{m}{1-\alpha}} o(t^{\frac{k}{2}}|y|^k)dy^n\right|\\
				&\le  t^{\frac{k}{2}}\int_{ T_pM}  \left(1+ \frac{(\alpha-1)|y|^2}{8}\right)_+^{\frac{m}{1-\alpha}}  \frac{o(t^{\frac{k}{2}}|y|^k)}{t^{\frac{k}{2}}|y|^k}|y|^k dy^n\\
				&=o(t^{\frac{k}{2}}) \text{ if either $0<\alpha<1$ or ($1<\alpha\le \frac{n}{n-2}$ and $\frac{m}{\alpha-1}-\frac{n}{2}-\frac{k}{2}>0)$},
			\end{aligned}
		\end{equation}
		where we have used $\int_{ T_pM}  \left(1+ \frac{(\alpha-1)|y|^2}{8}\right)_+^{\frac{m}{1-\alpha}} |y|^k dy^n<\infty$  if either $0<\alpha<1$ or ($1<\alpha\le \frac{n}{n-2}$ and $\frac{m}{\alpha-1}-\frac{n}{2}-\frac{k}{2}>0$), so $\int_{ T_pM}  \left(1+ \frac{(\alpha-1)|y|^2}{8}\right)_+^{\frac{m}{1-\alpha}}  \frac{o(t^{\frac{k}{2}}|y|^k)}{t^{\frac{k}{2}}|y|^k}|y|^k dy^n\to 0$ by Lebesgue dominated theorem. It follows that, by also noting 
		$\left(Q_{m,2}+P\right)=o(|x|^2)+O(|x|)t+o(t)$ and $P = o(|x|^2)t+o(|x|^4)+o(1)t^2$, we have
		\begin{equation}\label{D1st_2}
			\begin{aligned}
				\left|t^{-\frac{n}{2}}\int_{ W} \operatorname{H}^m(\frac{|x|}{\sqrt{t}})\left(Q_{m,2}+P\right)dx^n \right|=o(t)
				\text{ for either $0<\alpha<1$ or ($1<\alpha\le \frac{n}{n-2}$ and $\frac{m}{\alpha-1}-\frac{n}{2}-1>0$)},
			\end{aligned}
		\end{equation}
		and   
		\begin{equation}\label{D2rd_2}
			\begin{aligned}
				\left|t^{-\frac{n}{2}}\int_{ W} \operatorname{H}^m(\frac{|x|}{\sqrt{t}})Pdx^n \right|=o(t^2)  \text{ for either $0<\alpha<1$ or ($1<\alpha\le \frac{n}{n-2}$ and $\frac{m}{\alpha-1}-\frac{n}{2}-2>0$)}.
			\end{aligned}
		\end{equation}

		\textbf{Then \eqref{expansion_D1st} follows from \eqref{D1st_1} and \eqref{D1st_2}, and
			\eqref{expansion_D2rd} follows from \eqref{D2rd_1} and \eqref{D2rd_2}.}

		\textbf{(ii)} We calculate 
		\begin{equation}\label{term2}
			\begin{aligned}
				&\quad  t^{1-\frac{n}{2}}  \int_V \left|\nabla
				\left( \operatorname{H}(\frac{d(p,x)}{\sqrt{t}})\xi(x,t)\right)\right|^2 d\mu  	\\
				&=t^{1-\frac{n}{2}}  \int_W\left| \left\{ - \frac{|x|\nabla |x|}{4t\left(1+ \frac{(\alpha-1)|x|^2}{8t}\right)_+} \xi+\nabla\xi\right\}\left(1+ \frac{(\alpha-1)|x|^2}{8t}\right)_+^{\frac{1}{1-\alpha}} \right|^2\operatorname{det}\left(g_{k \ell}(x)\right)^\frac{1}{2} dx^n\\
				&=  t^{-\frac{n}{2}}  \int_W\frac{|x|^2}{16t}\operatorname{H}^{2\alpha}\left(\frac{|x|}{\sqrt{t}}\right)\xi^2 \operatorname{det}\left(g_{k \ell}(x)\right)^\frac{1}{2} dx^n+t^{1-\frac{n}{2}}  \int_W\operatorname{H}^{2}\left(\frac{|x|}{\sqrt{t}}\right)|\nabla \xi|^2\operatorname{det}\left(g_{k \ell}(x)\right)^\frac{1}{2} dx^n \\
				&\quad -\frac{1}{8}t^{-\frac{n}{2}}  \int_W\operatorname{H}^{1+\alpha}\left(\frac{|x|}{\sqrt{t}}\right)\langle \nabla|x|^2,  \nabla\xi^2 \rangle\operatorname{det}\left(g_{k \ell}(x)\right)^\frac{1}{2} dx^n.	
			\end{aligned}
		\end{equation}
		\textbf{Next, we calculate each term of \eqref{term2} separately.}
		
		\textbf{First, we calculate the first term in \eqref{term2}.}
		Applying \eqref{key_lemma_term1} with $m=2$,(\ref{3.1_e0}) and (\ref{3.1_e11}) with $q_1=2$, $q_2=\frac{2\alpha}{1-\alpha}$,   
		we get for either $0<\alpha<1$ or ($1<\alpha\le \frac{n}{n-2}$ and $\frac{2\alpha}{\alpha-1}-\frac{n}{2}-2>0$):
		\begin{equation}\label{expansion_A1st_i}
			\begin{aligned}
				&t^{-\frac{n}{2}}\int_{ W}\frac{|x|^2}{16t}\operatorname{H}^{2\alpha}\left(\frac{|x|}{\sqrt{t}}\right)\xi^2(x,t)\operatorname{det}\left(g_{k \ell}(x)\right)^\frac{1}{2}dx^n\\
				=&t^{-\frac{n}{2}}\left(\int_{T_pM}-\int_{T_pM\backslash W}\right)\frac{|x|^2}{16t}\operatorname{H}^{2\alpha}\left(\frac{|x|}{\sqrt{t}}\right)Q_{2,1}dx^n+t^{-\frac{n}{2}}\int_{ W}\frac{|x|^2}{16t}\operatorname{H}^{2\alpha}\left(\frac{|x|}{\sqrt{t}}\right)\left(Q_{2,2}+P\right)dx^n\\
				\xlongequal{y=\frac{x}{\sqrt{t}}}&
				\int_{T_pM} \frac{|y|^2}{16}\left(1+ \frac{(\alpha-1)|y|^2}{8}\right)_+^{\frac{2\alpha}{1-\alpha}} \xi^2(t^{\frac{1}{2}}y,t)	Q_{2,1}(t^{\frac{1}{2}}y,t)dy^n \\
				=& A_0\left\{1+\frac{1}{n}\left(\frac{|\alpha-1|}{8}\right)^{-1}	\frac{\mathscr{B}(\frac{n}{2}+2,\frac{2\alpha}{1-\alpha}+1)}{\mathscr{B}(\frac{n}{2}+1,\frac{2\alpha}{1-\alpha}+1)}\left(\operatorname{tr}(\textbf{a})-\frac{1}{6}\operatorname{Sc}(p)\right)t+\beta_1 t\right\}+o(t),
			\end{aligned}
		\end{equation}
		where we have used  
		for either $0<\alpha<1$ or ($1<\alpha\le \frac{n}{n-2}$ and $\frac{2\alpha}{\alpha-1}-\frac{n}{2}-2>0$): 	by applying \eqref{estimate_forQ} with $m=2$, 
		\eqref{estimate_Q}   with $k=4$  and  \eqref{reason}  with $k=4$, 
		\begin{equation}
			\begin{aligned}
				&\left| t^{-\frac{n}{2}}\int_{T_pM\backslash W}\frac{|x|^2}{16t}\operatorname{H}^{2\alpha}\left(\frac{|x|}{\sqrt{t}}\right) Q_{2,1} dx^n\right|= o(t),\\
				&\left|t^{-\frac{n}{2}}\int_{ W}\frac{|x|^2}{16t}\operatorname{H}^{2\alpha}\left(\frac{|x|}{\sqrt{t}}\right)\left(Q_{2,2}+P\right) dx^n\right|=o(t).
			\end{aligned}
		\end{equation}

		Moreover, applying (\ref{3.1_e0}), (\ref{3.1_e11})  and (\ref{3.1_e21}) with $q_1=2$, $q_2=\frac{2\alpha}{1-\alpha}$,  \eqref{key_lemma_term1} with $m=2$, 
		we get for either $0<\alpha<1$ or ($1<\alpha\le \frac{n}{n-2}$ and $\frac{2\alpha}{\alpha-1}-\frac{n}{2}-3>0$):
		\begin{equation}\label{expansion_A2rd_i}
			\begin{aligned}
				&t^{-\frac{n}{2}}\int_{ W}\frac{|x|^2}{16t}\operatorname{H}^{2\alpha}\left(\frac{|x|}{\sqrt{t}}\right)\xi^2(x,t)\operatorname{det}\left(g_{k \ell}(x)\right)^\frac{1}{2}dx^n\\
				=&t^{-\frac{n}{2}}\left(\int_{T_pM}-\int_{T_pM\backslash W}\right)\frac{|x|^2}{16t}\operatorname{H}^{2\alpha}\left(\frac{|x|}{\sqrt{t}}\right)\left(Q_{2,1}+Q_{2,2}\right)dx^n+t^{-\frac{n}{2}}\int_{ W}\frac{|x|^2}{16t}\operatorname{H}^{2\alpha}\left(\frac{|x|}{\sqrt{t}}\right)Pdx^n\\
				\xlongequal{y=\frac{x}{\sqrt{t}}}&
				\int_{T_pM} \frac{|y|^2}{16}\left(1+ \frac{(\alpha-1)|y|^2}{8}\right)_+^{\frac{2\alpha}{1-\alpha}} \xi^2(t^{\frac{1}{2}}y,t)	\left(Q_{2,1}+Q_{2,2}\right)(t^{\frac{1}{2}}y,t)dy^n +o(t^2)\\
				=& A_0\left\{1+\frac{1}{n}\left(\frac{|\alpha-1|}{8}\right)^{-1}	\frac{\mathscr{B}(\frac{n}{2}+2,\frac{2\alpha}{1-\alpha}+1)}{\mathscr{B}(\frac{n}{2}+1,\frac{2\alpha}{1-\alpha}+1)}\left(\operatorname{tr}(\textbf{a})-\frac{1}{6}\operatorname{Sc}(p)\right)t+\beta_1 t\right.\\
				&	+
				\frac{1}{n(n+2)}	\left(\frac{|\alpha-1|}{8}\right)^{-2}\frac{\mathscr{B}(\frac{n}{2}+3,\frac{2\alpha}{1-\alpha}+1)}{\mathscr{B}(\frac{n}{2}+1,\frac{2\alpha}{1-\alpha}+1)}\left(\operatorname{E}(b)+\operatorname{E}(v)-\frac{1}{6}\operatorname{E}(\textbf{a} \otimes \operatorname{Rc})\right)t^2\\
				&\left.+\frac{1}{n}\left(\frac{|\alpha-1|}{8}\right)^{-1}	\frac{\mathscr{B}(\frac{n}{2}+2,\frac{2\alpha}{1-\alpha}+1)}{\mathscr{B}(\frac{n}{2}+1,\frac{2\alpha}{1-\alpha}+1)}\left(\operatorname{tr}(d)-\frac{1}{6}\beta_1\operatorname{Sc}(p)\right)t^2+\beta_2t^2\right\}+o(t^2),
			\end{aligned}
		\end{equation}
		where we have used 
		for either $0<\alpha<1$ or ($1<\alpha\le \frac{n}{n-2}$ and $\frac{2\alpha}{\alpha-1}-\frac{n}{2}-3>0$): by applying \eqref{estimate_forQ} with $m=2$, 
		\eqref{estimate_Q}   with $k=6$  and  \eqref{reason}  with $k=6$,
		\begin{equation}
			\begin{aligned}
				&\left| t^{-\frac{n}{2}}\int_{T_pM\backslash W}\frac{|x|^2}{16t}\operatorname{H}^{2\alpha}\left(\frac{|x|}{\sqrt{t}}\right) \left(Q_{2,1}+Q_{2,2}\right) dx^n\right|= o(t^2),\\
				&\left|t^{-\frac{n}{2}}\int_{ W}\frac{|x|^2}{16t}\operatorname{H}^{2\alpha}\left(\frac{|x|}{\sqrt{t}}\right)P dx^n\right|=o(t^2).
			\end{aligned}
		\end{equation}

		\textbf{Second, we calculate the second term in \eqref{term2}.}
		We calculate that
		\begin{equation}\label{second_term_nabla}
			\begin{aligned}
				&\quad |\nabla \xi|^2\operatorname{det}\left(g_{k \ell}(x)\right)^\frac{1}{2}= \frac{|\nabla \xi^2|^2}{4\xi^2}\operatorname{det}\left(g_{k \ell}(x)\right)^\frac{1}{2}\\
				&=\frac{|\nabla \phi_0|^2+tO(d)+O(t^2)}{4\xi^2}\operatorname{det}\left(g_{k \ell}(x)\right)^\frac{1}{2}\\
				&=\frac{\sum\limits_{k=1}| \frac{\partial}{\partial x^k} (\textbf{a}_{ij}x^ix^j)|^2+o(d^2)+tO(d)+O(t^2)}{4\xi^2}\operatorname{det}\left(g_{k \ell}(x)\right)^\frac{1}{2}\\
				&=\sum_{i=1} \textbf{a}_{ij}\textbf{a}_{ik}x^jx^k+o(d^2)+tO(d)+O(t^2).
			\end{aligned}
		\end{equation}
		Hence, we can write $|\nabla \xi|^2\operatorname{det}\left(g_{k \ell}(x)\right)^\frac{1}{2}=o(|x|^s)+O(t)$ on $W$ for all $0<s\le 1$. By \eqref{reason}, we have
		for either $0<\alpha<1$ or ($1<\alpha\le \frac{n}{n-2}$ and $\frac{2}{\alpha-1}-\frac{n}{2}=\frac{2\alpha}{\alpha-1}-\frac{n}{2}-2>0$): take $s$ small enough such that $\frac{2}{\alpha-1}-\frac{n+s}{2}>0$ if $\alpha>1$ and  $0<s\le 1$ if $0<\alpha<1$ ,
		\begin{equation}\label{expansion_A1st_ii}
			\begin{aligned}
				&t^{1-\frac{n}{2}}\int_V\operatorname{H}^{2}\left(\frac{|x|}{\sqrt{t}}\right)|\nabla \xi|^2d\mu\\
				=&t^{1-\frac{n}{2}}\int_{W}\operatorname{H}^{2}\left(\frac{|x|}{\sqrt{t}}\right)\left(o(|x|^s)+O(t)\right)dx^n\\
				\xlongequal{y=\frac{x}{\sqrt{t}}}& t\int_{W}\left(1+ \frac{(\alpha-1)|y|^2}{8}\right)_+^{\frac{2}{1-\alpha}}\left(\frac{o(t^{\frac{s}{2}}|y|^s)}{t^{\frac{s}{2}}|y|^s}t^{\frac{s}{2}}|y|^s+O(t)\right)dy^n\\
				=& o(t).
			\end{aligned}
		\end{equation}
		Moreover, applying \eqref{second_term_nabla}, (\ref{3.1_e11}) with $q_1=0$ and $q_2=\frac{2}{1-\alpha}$, we get for either $0<\alpha<1$ or ($1<\alpha\le \frac{n}{n-2}$ and $\frac{2}{\alpha-1}-\frac{n}{2}-1=\frac{2\alpha}{\alpha-1}-\frac{n}{2}-3>0$):
		\begin{equation}\label{expansion_A2rd_ii}
			\begin{aligned}
				&t^{1-\frac{n}{2}}\int_V\operatorname{H}^{2}\left(\frac{|x|}{\sqrt{t}}\right)|\nabla \xi|^2d\mu\\
				=&t^{1-\frac{n}{2}}\left(\int_{T_pM}-\int_{T_pM\backslash W}\right)\operatorname{H}^{2}\left(\frac{|x|}{\sqrt{t}}\right)\left(\sum_{i=1} \textbf{a}_{ij}\textbf{a}_{ik}x^jx^k\right)dx^n\\
				&+t^{1-\frac{n}{2}}\int_{T_pM}\operatorname{H}^{2}\left(\frac{|x|}{\sqrt{t}}\right)\left(o(|x|^2)+tO(|x|)+O(t^2)\right)dx^n\\
				\xlongequal{y=\frac{x}{\sqrt{t}}}& t\int \left(1+ \frac{(\alpha-1)|y|^2}{8}\right)_+^{\frac{2}{1-\alpha}}\left(\sum_{i=1} \textbf{a}_{ij}\textbf{a}_{ik}y^jy^kt\right)dy^n+o(t^2) \\
				=& A_0\cdot\frac{16}{n}	\frac{\mathscr{B}(\frac{n}{2}+1,\frac{2}{1-\alpha}+1)}{\mathscr{B}(\frac{n}{2}+1,\frac{2\alpha}{1-\alpha}+1)}\operatorname{tr}(\textbf{a}^2)t^2+o(t^2),
			\end{aligned}
		\end{equation}
		where we have used 
		for either $0<\alpha<1$ or ($1<\alpha\le \frac{n}{n-2}$ and $\frac{2}{\alpha-1}-\frac{n}{2}-1>0$): 		by applying \eqref{estimate_Q} with $k=2$ and $m=2$, and \eqref{reason},
		\begin{equation}
			\begin{aligned}
				&\left| t^{1-\frac{n}{2}}\int_{T_pM\backslash W}\operatorname{H}^{2}\left(\frac{|x|}{\sqrt{t}}\right)\left(\sum_{i=1} \textbf{a}_{ij}\textbf{a}_{ik}x^jx^k\right) dx^n\right|= o(t^2),\\
				&\left|t^{1-\frac{n}{2}}\int_{ W}\operatorname{H}^{2}\left(\frac{|x|}{\sqrt{t}}\right)\left(o(|x|^2)+tO(|x|)+O(t^2)\right) dx^n\right|=o(t^2).
			\end{aligned}
		\end{equation}
		
		\textbf{Third, we calculate the third term in \eqref{term2}.}
		We calculate that 
		\begin{equation*}
			\begin{aligned}
				&\quad\left(\nabla d^2 \cdot \nabla \xi^2\right)\operatorname{det}\left(g_{k \ell}(x)\right)^\frac{1}{2}\\
				&	= g^{rs} \frac{\partial d^2}{\partial x_r} \frac{\partial \xi^2}{\partial x_s} \operatorname{det}\left(g_{k \ell}(x)\right)^\frac{1}{2}\\
				&	= \left(\delta_{rs}+\frac{1}{3}R_{rijs}(p)x^ix^j+O(d^3)\right) \frac{\partial d^2}{\partial x_r} \frac{\partial \xi^2}{\partial x_s} \operatorname{det}\left(g_{k \ell}(x)\right)^\frac{1}{2}\\
				&= 4\textbf{a}_{ij}x^ix^j+T+P,
			\end{aligned}
		\end{equation*}
		where
		$$
		\begin{aligned}
			T := \left(6e_{ijk}x^ix^jx^k+2q_ix^it\right)+\left(4d_{ij}x^ix^j t+8b_{ijkl}x^ix^jx^kx^l-\frac{2}{3}\textbf{a}_{ij}R_{kl}(p)x^ix^jx^kx^l+\frac{4}{3}\sum_r R_{ijkr}(p)\textbf{a}_{rl}x^ix^jx^kx^l\right) ,
		\end{aligned}
		$$
		$$
		\begin{aligned}
			P  := o(|x|^2)t+o(|x|^4)+o(1)t^2.
		\end{aligned}
		$$
		Applying (\ref{3.1_e0})-(\ref{3.1_e21}) with $q_1=0$, $q_2=\frac{1+\alpha}{1-\alpha}=\frac{2\alpha}{1-\alpha}+1$,
		we get for
		either $0<\alpha<1$ or ($1<\alpha\le \frac{n}{n-2}$ and $\frac{2\alpha}{\alpha-1}-\frac{n}{2}-2>0$):
		\begin{equation}\label{expansion_A1st_iii}
			\begin{aligned}
				&\quad -t^{-\frac{n}{2}}\int_W\frac{1}{8}\operatorname{H}^{1+\alpha} \langle \nabla|x|^2,  \nabla\xi^2 \rangle \operatorname{det}\left(g_{k \ell}(x)\right)^\frac{1}{2} dx^n \\
				&= -t^{-\frac{n}{2}}\left(\int_{T_pM}-\int_{T_pM\backslash W}\right)\frac{1}{8}\operatorname{H}^{1+\alpha} 4\textbf{a}_{ij}x^ix^jdx^n-t^{-\frac{n}{2}}\int_{W}\frac{1}{8}\operatorname{H}^{1+\alpha} \left(T+P\right)dx^n \\
				&\xlongequal{y=\frac{x}{\sqrt{t}}} -\int \frac{1}{8}\left(1+ \frac{(\alpha-1)|y|^2}{8}\right)_+^{\frac{1+\alpha}{1-\alpha}}  4\textbf{a}_{ij}y^iy^jtdy^n +o(t)\\
				&= -A_0\cdot\frac{8}{n}	\frac{\mathscr{B}(\frac{n}{2}+1,\frac{2\alpha}{1-\alpha}+2)}{\mathscr{B}(\frac{n}{2}+1,\frac{2\alpha}{1-\alpha}+1)}\operatorname{tr}(\textbf{a})t+o(t),
			\end{aligned}
		\end{equation}
		where we have used
		for either $0<\alpha<1$ or ($1<\alpha\le \frac{n}{n-2}$ and $\frac{2\alpha}{\alpha-1}-\frac{n}{2}-2>0$): by applying \eqref{estimate_Q} with $k=2$ and $m=1+\alpha$, and
		\eqref{reason},
		\begin{equation}
			\begin{aligned}
				&\left| t^{-\frac{n}{2}}\int_{T_pM\backslash W}\frac{1}{8}\operatorname{H}^{1+\alpha} 4\textbf{a}_{ij}x^ix^jdx^n\right|= o(t),\\
				&\left|t^{-\frac{n}{2}}\int_{ W}\frac{1}{8}\operatorname{H}^{1+\alpha} \left(T+P\right) dx^n\right|=o(t).
			\end{aligned}
		\end{equation}

		Moreover, applying (\ref{3.1_e0}), (\ref{3.1_e11}) and (\ref{3.1_e21}) with $q_1=0$, $q_2=\frac{1+\alpha}{1-\alpha}=\frac{2\alpha}{1-\alpha}+1$,
		we get for either $0<\alpha<1$ or ($1<\alpha\le \frac{n}{n-2}$ and $\frac{2\alpha}{\alpha-1}-\frac{n}{2}-3>0$):
		\begin{equation}\label{expansion_A2rd_iii}
			\begin{aligned}
				&\quad -t^{-\frac{n}{2}}\int_W\frac{1}{8}\operatorname{H}^{1+\alpha} \langle \nabla|x|^2,  \nabla\xi^2 \rangle \operatorname{det}\left(g_{k \ell}(x)\right)^\frac{1}{2} dx^n \\
				&= -t^{-\frac{n}{2}}\left(\int_{T_pM}-\int_{T_pM\backslash W}\right)\frac{1}{8}\operatorname{H}^{1+\alpha} \left( 4\textbf{a}_{ij}x^ix^j+T\right) dx^n-t^{-\frac{n}{2}}\int_{W}\frac{1}{8}\operatorname{H}^{1+\alpha} Pdx^n\\
				&\xlongequal{y=\frac{x}{\sqrt{t}}} -\int \frac{1}{8}\left(1+ \frac{(\alpha-1)|y|^2}{8}\right)_+^{\frac{1+\alpha}{1-\alpha}}  \left( 4\textbf{a}_{ij}y^iy^jt+T(t^{\frac{1}{2}}y,t)\right)dy^n +o(t^2)\\
				&= A_0\left\{-\frac{8}{n}	\frac{\mathscr{B}(\frac{n}{2}+1,\frac{2\alpha}{1-\alpha}+2)}{\mathscr{B}(\frac{n}{2}+1,\frac{2\alpha}{1-\alpha}+1)}\operatorname{tr}(\textbf{a})t-\frac{8}{n}	\frac{\mathscr{B}(\frac{n}{2}+1,\frac{2\alpha}{1-\alpha}+2)}{\mathscr{B}(\frac{n}{2}+1,\frac{2\alpha}{1-\alpha}+1)}\operatorname{tr}(d)t^2\right.\\
				&	\left.\quad +
				\frac{1}{n(n+2)}	\left(\frac{|\alpha-1|}{8}\right)^{-1}\frac{\mathscr{B}(\frac{n}{2}+2,\frac{2\alpha}{1-\alpha}+2)}{\mathscr{B}(\frac{n}{2}+1,\frac{2\alpha}{1-\alpha}+1)}\left(-16\operatorname{E}(b)+\frac{4}{3}\operatorname{E}(\textbf{a} \otimes \operatorname{Rc})\right)t^2\right\}+o(t^2),
			\end{aligned}
		\end{equation}
		where we have used 
		$$
		E(\sum_rR_{ijkr}(p)\textbf{a}_{rl})=\sum\limits_{i, j}\sum_r\left(R_{iijr}(p)\textbf{a}_{rj}+R_{ijir}(p)\textbf{a}_{rj}+R_{ijjr}(p)\textbf{a}_{ri}\right)=0.
		$$
		and
		for either $0<\alpha<1$ or ($1<\alpha\le \frac{n}{n-2}$ and  $\frac{2\alpha}{\alpha-1}-\frac{n}{2}-3>0$): 		by applying \eqref{estimate_Q} with $k=4$ and $m=1+\alpha$, and \eqref{reason},
		\begin{equation}
			\begin{aligned}
				&\left| t^{-\frac{n}{2}}\int_{T_pM\backslash W}\frac{1}{8}\operatorname{H}^{1+\alpha} \left(4\textbf{a}_{ij} +T\right)dx^n\right|= o(t^2),\\
				&\left|t^{-\frac{n}{2}}\int_{ W}\frac{1}{8}\operatorname{H}^{1+\alpha} P dx^n\right|=o(t^2).
			\end{aligned}
		\end{equation}

		\textbf{Then \eqref{expansion_A1st} follows from \eqref{expansion_A1st_i}, \eqref{expansion_A1st_ii} and \eqref{expansion_A1st_iii}. Then \eqref{expansion_A2rd} follows from \eqref{expansion_A2rd_i}, \eqref{expansion_A2rd_ii} and \eqref{expansion_A2rd_iii}.}
		
		(iii)
		Also notice that when $0<\alpha<1$ or ($1<\alpha\le \frac{n}{n-2}$ and $\frac{2\alpha}{\alpha-1}-\frac{n}{2}-3>0$), we have
		\begin{equation}\label{term7}
			\begin{aligned}
				&\quad 	 
				t^{1-\frac{n}{2}}  \int_V \operatorname{Sc}(x)\cdot  \left( \operatorname{H}(\frac{d(p,x)}{\sqrt{t}})\xi(x,t)\right)^2  d\mu 
				\\
				&= 	t^{1-\frac{n}{2}} \int_{W} \left(\operatorname{Sc}(p)+\nabla_i\operatorname{Sc}(p)x^i+\frac{1}{2}\nabla_i\nabla_j \operatorname{Sc}(p) x^i x^j+o(|x|^2)\right)\\
				&\times
				\left(1+\textbf{a}_{ij}x^ix^j+o(|x|^2)+\beta_1 t+o(t)\right)\times (1-\frac{1}{6} R_{ij}(p) x^i x^j+o(|x|^2))\operatorname{H}^2(\frac{|x|}{\sqrt{t}})dx^n	\\
				&= 	t^{1-\frac{n}{2}} \left(\int_{T_pM}- \int_{T_pM\backslash W}\right)\left(\operatorname{Sc}(p)+\frac{1}{2}\nabla_i\nabla_j \operatorname{Sc}(p) x^i x^j+\operatorname{Sc}(p)\textbf{a}_{ij} x^i x^j-\frac{1}{6}\operatorname{Sc}(p) R_{ij}(p)x^i x^j+\beta_1 \operatorname{Sc}(p) t\right)\operatorname{H}^2(\frac{|x|}{\sqrt{t}})dx^n\\
				&\quad+t^{1-\frac{n}{2}} \int_{W} \left(o(|x|^2)+o(t)\right)\operatorname{H}^2(\frac{|x|}{\sqrt{t}})dx^n\\
				&\xlongequal{y=\frac{x}{\sqrt{t}}} 	t\int_{T_pM} \left(\operatorname{Sc}(p)+\frac{1}{2}\nabla_i\nabla_j \operatorname{Sc}(p) y^i y^j t+\operatorname{Sc}(p)\textbf{a}_{ij} y^i y^j t-\frac{1}{6}\operatorname{Sc}(p) R_{ij}(p)y^i y^jt+\beta_1 \operatorname{Sc}(p) t\right)\operatorname{H}^2(|y|)dy^n\\
				& \quad+o(t^2)+t \int_{W} \left(o(t|y|^2)+o(t)\right)\operatorname{H}^2(|y|)dy^n\\
				&=\frac{\omega_{n-1}}{2}\left(\frac{|\alpha-1|}{8}\right)^{-\frac{n}{2}}\mathscr{B}(\frac{n}{2},\frac{2}{1-\alpha}+1)\times\\
				&\Big\{\operatorname{Sc}(p)t+\frac{1}{n}\left(\frac{|\alpha-1|}{8}\right)^{-1}	\frac{\mathscr{B}(\frac{n}{2}+1,\frac{2}{1-\alpha}+1)}{\mathscr{B}(\frac{n}{2},\frac{2}{1-\alpha}+1)}\left(\frac{1}{2}\Delta \operatorname{Sc}(p)-\frac{1}{6}\operatorname{Sc}^2(p)+\operatorname{tr}(\textbf{a})\operatorname{Sc}(p)\right)t^2+\beta_1 \operatorname{Sc}(p)t^2\Big\}+o(t^2),
			\end{aligned}
		\end{equation}
		where we have used for $0<\alpha<1$ or ($1<\alpha\le \frac{n}{n-2}$ and $\frac{2\alpha}{\alpha-1}-\frac{n}{2}-3>0$): applying \eqref{estimate_Q} with $k=2$ and $m=2$, and by \eqref{reason},
		\begin{equation}
			\begin{aligned}
				&\left| t^{1-\frac{n}{2}}\int_{T_pM\backslash W}\left(\operatorname{Sc}(p)+\frac{1}{2}\nabla_i\nabla_j \operatorname{Sc}(p) x^i x^j+\operatorname{Sc}(p)\textbf{a}_{ij} x^i x^j-\frac{1}{6}\operatorname{Sc}(p) R_{ij}(p)x^i x^j+\beta_1 \operatorname{Sc}(p) t\right)\operatorname{H}^2(\frac{|x|}{\sqrt{t}})\right|= o(t^2),\\
				&\left|t^{1-\frac{n}{2}} \int_{W} \left(o(|x|^2)+o(t)\right)\operatorname{H}^2(\frac{|x|}{\sqrt{t}})dx^n\right|=o(t^2).
			\end{aligned}
		\end{equation}
	\end{proof}
	
	\section{ Power series expansion formulas of $\mathcal{L}^{\pm}_{\alpha}$-functionals and Proof of Theorem \ref{const_rigidity}}

	In this section, we derive the power series expansion formulas for the $\mathcal{L}^{\pm}_{\alpha}$-functionals and subsequently provide the proof of Theorem~\ref{const_rigidity}.

	\begin{thm}\label{expansion_L_a}
		Let $(M^n,g)$ be an $n$-dimensional Riemannian manifold and $p\in V\subset \mathring{M^n}$, where $V$ is a neighborhood of $p$.

		\noindent (i)	Denote
		\begin{equation}\label{def_u-}
			v_-(x,t):=(4\pi t)^{-\frac{n}{2(\alpha+1)}}\operatorname{H}\left(\frac{|x|}{\sqrt{t}}\right),\quad u_-:=\frac{v_-}{\left(\int_{T_pM}v_-^{\alpha+1}dx^n\right)^{\frac{1}{\alpha+1}}}=\frac{t^{-\frac{n}{2(\alpha+1)}}\operatorname{H}\left(\frac{|x|}{\sqrt{t}}\right)}{D_0(\alpha+1)^{\frac{1}{\alpha+1}}},
		\end{equation}
		and 
		\begin{equation*}
			\tau_-(t)^{1+2\Gamma_{\alpha}}:=\frac{\mathfrak{m}\Gamma_{\alpha}}{1+\Gamma_{\alpha}}\left(\frac{A_0}{D_0(\alpha+1)^{\frac{2}{\alpha+1}}}\right)^{-1}\frac{D_0(2\alpha)}{D_0(\alpha+1)^{\frac{2\alpha}{\alpha+1}}}t^{1+2\Gamma_{\alpha}}.\footnote{See \eqref{tau_connection} for the reason behind our choice of $\tau_{\pm}(t)$.\label{tau_def}}
		\end{equation*}
		Choose the function $\xi_-(x,t)$ to have the expansion \eqref{expansion_of_xi} such that   $p \in \operatorname{supp}(\xi_-) \subset\subset V$ and $$\int_V (u_-\xi_-)^{\alpha+1}d\mu \equiv 1.$$	
		When $0<\alpha<1$, we have 
		\begin{equation}\label{expansion_-}
			\mathcal{L}^-_{\alpha}(V,g, u_-(x,t)\xi_-(x,t),\tau_-(t))=\mathfrak{m}^{1-\frac{\Gamma_{\alpha}}{2\Gamma_{\alpha}+1}}\Sigma^-_{\alpha} \left(\zeta_1 \operatorname{Sc}(p)t-32\zeta_2\Delta \operatorname{Sc}(p)t^2+32\zeta_2\operatorname{\uppercase\expandafter{\romannumeral2}} t^2+\operatorname{\uppercase\expandafter{\romannumeral3}}^-t^2+o(t^2)\right),
		\end{equation}
		where
		\begin{equation}\label{see_why}
			\begin{aligned}
				&\operatorname{\uppercase\expandafter{\romannumeral2}} :=\chi\left|\textbf{a}-\frac{2(\alpha+1) \operatorname{Rc}(p)}{3\chi}\right|^2+\frac{4((n+5) \alpha-n-3)(\alpha-1) }{9\chi}|\operatorname{Rc}|^2(p)-\frac{1}{6}|\operatorname{Rm}|^2(p),
			\end{aligned}		
		\end{equation}
		\begin{equation}
			\zeta_1:=\frac{8}{n\left(n(\alpha-1)-4\right)},		\zeta_2:=\frac{\zeta_1}{8\left(n(\alpha-1)+2\alpha-6\right)},		\chi:=(n+6) \alpha^2-2(n+3)\alpha+ n+4>0.
		\end{equation}
		and
		$\operatorname{\uppercase\expandafter{\romannumeral3}}^-$ is a constant satisfying
		$
		\operatorname{\uppercase\expandafter{\romannumeral3}}^-=0 \text{ if } \operatorname{Sc}(p)=0 \text{ and } \operatorname{tr}(\textbf{a})=0$ \footnote{ For the proof of Theorem \ref{const_rigidity}, the exact values of $\operatorname{\uppercase\expandafter{\romannumeral3}}^{\pm}$ are not required. However, these precise values  become essential for Theorem \ref{Yamabe_rigidity}. We will compute
			the exact value of $\operatorname{\uppercase\expandafter{\romannumeral3}}^{\pm}$ in Lemma \ref{expansion_W_a}.\label{llama3.2}}. Here, $\textbf{a}$ is the tensor which defined in the expansion  \eqref{expansion_of_xi}.

		\noindent (ii)	 Denote 
		\begin{equation}\label{def_u+}
			v_+(x,t):=(4\pi t )^{-\frac{n}{4\alpha}}\operatorname{H}(\frac{|x|}{\sqrt{t}}),\quad u_+:=\frac{v_+}{(\int_{T_pM}v_+^{2\alpha}dx^n)^{\frac{1}{2\alpha}}}=\frac{t ^{-\frac{n}{4\alpha}}\operatorname{H}(\frac{|x|}{\sqrt{t}})}{D_0(2\alpha)^{\frac{1}{2\alpha}}}, 
		\end{equation}
		and 
		$$\tau_+(t)^{1-\Theta_{\alpha}}:=\frac{\mathfrak{m}\Theta_{\alpha}}{1-2\Theta_{\alpha}}\left(\frac{A_0}{D_0(2\alpha)^{\frac{1}{\alpha}}}\right)^{-1}\frac{D_0(\alpha+1)}{ D_0(2\alpha)^{\frac{\alpha+1}{2\alpha}}}t^{1-\Theta_{\alpha}}.^*$$
		Choose the function $\xi_+(x,t)$ to have the expansion \eqref{expansion_of_xi} such that  such that   $p \in \operatorname{supp}(\xi_+) \subset\subset V$ and $$\int_V (u_+\xi_+)^{2\alpha}d\mu \equiv 1.$$	 \\
		When $1<\alpha\le \frac{n}{n-2}$ and $\alpha<\frac{n+4}{n}$,
		we have 
		\begin{equation}\label{expansion_+_1}
			\mathcal{L}^+_{\alpha}(V,g, u_+(x,t)\xi_+(x,t),\tau_+(t))=\mathfrak{m}^{1-\frac{\Theta_{\alpha}}{1-\Theta_{\alpha}}}\Sigma^+_{\alpha}   \left(\zeta_1 \operatorname{Sc}(p)t+o(t)\right)
		\end{equation}
		Moreover, when $1<\alpha\le \frac{n}{n-2}$ and $\alpha<\frac{n+6}{n+2}$,
		we have 
		\begin{equation}\label{expansion_+_2}
			\mathcal{L}^+_{\alpha}(V,g, u_+(x,t)\xi_+(x,t),\tau_+(t))=\mathfrak{m}^{1-\frac{\Theta_{\alpha}}{1-\Theta_{\alpha}}}\Sigma^+_{\alpha}   \left(\zeta_1 \operatorname{Sc}(p)t-32\zeta_2\Delta \operatorname{Sc}(p)t^2+32\zeta_2\operatorname{\uppercase\expandafter{\romannumeral2}} t^2+\operatorname{\uppercase\expandafter{\romannumeral3}}^+t^2+o(t^2)\right),
		\end{equation}
		and
		$\operatorname{\uppercase\expandafter{\romannumeral3}}^+$ is a constant satisfying
		$
		\operatorname{\uppercase\expandafter{\romannumeral3}}^+=0 \text{ if } \operatorname{Sc}(p)=0 \text{ and } \operatorname{tr}(\textbf{a})=0\textsuperscript{\ref{llama3.2}}.
		$

	\end{thm}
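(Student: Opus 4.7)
The plan is to substitute $u_\pm(x,t)\xi_\pm(x,t)$ into the definition of $\mathcal{L}^\pm_\alpha$ and apply Lemma \ref{key_lemma} to each of the three integrals that appear. By construction, $u_\pm$ absorbs the normalization $D_0(\alpha+1)^{-1/(\alpha+1)}$ (resp.\ $D_0(2\alpha)^{-1/(2\alpha)}$); combined with the $t^{n/2}$ produced by the change of variables in the lemma, each integral acquires a precise power of $t$, which in turn is balanced by $\tau_\pm(t)^{\Gamma_\alpha+1}$ or $\tau_\pm(t)^{-\Gamma_\alpha}$ (respectively $\tau_\pm^{1-2\Theta_\alpha}$ and $\tau_\pm^{-\Theta_\alpha}$ in the $+$ case) to produce a power series in $t$.

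The formula for $\tau_\pm(t)$ is exactly the minimizer prescribed by \eqref{tau_connection}, applied to the zeroth-order coefficients of the gradient and $L^{2\alpha}$ (resp.\ $L^{\alpha+1}$) integrals. Because $H$ is the Euclidean Gagliardo-Nirenberg extremizer, these zeroth-order coefficients satisfy the Del Pino-Dolbeault identity that is encoded in the definition of $\Sigma^\pm_\alpha$. Consequently, the elementary AM-GM identity from the proof of Theorem \ref{connection} forces the constant term of $\tau_-^{\Gamma_\alpha+1}\int|\nabla(u_-\xi_-)|^2\,d\mu + \mathfrak{m}\tau_-^{-\Gamma_\alpha}\int(u_-\xi_-)^{2\alpha}\,d\mu$ to equal $\tfrac{2\Gamma_\alpha+1}{\Gamma_\alpha}\mathfrak{m}^{1-\Gamma_\alpha/(2\Gamma_\alpha+1)}\Sigma^-_\alpha$, which is exactly the additive constant subtracted in the definition of $\mathcal{L}^-_\alpha$ (analogously for $+$). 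Hence the expansion starts at order $t$. The normalization $\int_V (u_\pm\xi_\pm)^{\alpha+1}\,d\mu\equiv 1$ (or $\int_V(u_\pm\xi_\pm)^{2\alpha}\,d\mu\equiv 1$) is equivalent to $D_1(\alpha+1) = D_2(\alpha+1) = 0$ (resp.\ $D_1(2\alpha) = D_2(2\alpha) = 0$); solving yields $\beta_1$ as an explicit linear combination of $\operatorname{tr}(\textbf{a})$ and $\operatorname{Sc}(p)$, and $\beta_2$ as a linear combination of $\operatorname{tr}(d),\operatorname{E}(b),\operatorname{E}(v),\operatorname{E}(\textbf{a}\otimes\operatorname{Rc})$ and lower-order products. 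After back-substitution the $O(t)$ coefficient collapses to $\zeta_1\operatorname{Sc}(p)$, which already proves \eqref{expansion_+_1}.

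The main obstacle is the algebraic simplification of the $O(t^2)$ coefficient. The raw coefficient combines $\operatorname{E}(b), \operatorname{E}(v), \operatorname{E}(\textbf{a}\otimes\operatorname{Rc}), \operatorname{tr}(d)$, a quadratic form in $\textbf{a}$, and cross terms proportional to $\operatorname{Sc}(p)$ or $\operatorname{tr}(\textbf{a})$. The crucial step is evaluating $\operatorname{E}(v)$ from \eqref{def_v} using the contracted second Bianchi identity; this extracts the $\Delta\operatorname{Sc}(p)$ contribution and produces the $|\operatorname{Rm}|^2(p)$ and $|\operatorname{Rc}|^2(p)$ pieces. Repeated use of the beta-function telescoping $\mathscr{B}(p,q+1)/\mathscr{B}(p,q)=q/(p+q)$ (and its analogue for $\alpha>1$) reduces the various $\mathscr{B}$-ratios in Lemma \ref{key_lemma} to explicit rational functions of $\alpha, n$. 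The $\textbf{a}$-quadratic part, together with the $\operatorname{E}(\textbf{a}\otimes\operatorname{Rc})$ term, then reorganizes into the perfect square $\chi|\textbf{a}-\tfrac{2(\alpha+1)}{3\chi}\operatorname{Rc}|^2$, with a residual $|\operatorname{Rc}|^2$ coefficient $\tfrac{4((n+5)\alpha-n-3)(\alpha-1)}{9\chi}$ and leftover $-\tfrac{1}{6}|\operatorname{Rm}|^2(p)$, while $\operatorname{E}(b)$ and $\operatorname{tr}(d)$ cancel via the normalization relation $D_2=0$. Every remaining term that still carries a factor of $\operatorname{Sc}(p)$ or $\operatorname{tr}(\textbf{a})$ is absorbed into $\mathbf{III}^\pm$, which therefore vanishes when $\operatorname{Sc}(p)=0=\operatorname{tr}(\textbf{a})$. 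The ranges $\alpha<(n+4)/n$ and $\alpha<(n+6)/(n+2)$ in the $+$ case appear precisely as the integrability thresholds $\tfrac{2\alpha}{\alpha-1}-\tfrac{n}{2}-2>0$ and $\tfrac{2\alpha}{\alpha-1}-\tfrac{n}{2}-3>0$ required by Lemma \ref{key_lemma}(ii) for the first- and second-order gradient expansions, respectively.
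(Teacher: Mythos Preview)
Your proposal is correct and follows essentially the same route as the paper: plug $u_\pm\xi_\pm$ into $\mathcal{L}^\pm_\alpha$, apply Lemma~\ref{key_lemma} to each integral, use the choice of $\tau_\pm$ from \eqref{tau_connection} so that the Del Pino--Dolbeault identity kills the constant term, and then reduce the beta-function ratios to explicit rational functions of $(\alpha,n)$ before completing the square in $\mathbf a$.

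One presentational point worth noting: rather than back-substituting $\beta_1,\beta_2$ from the normalization and then claiming that $E(b)$ and $\operatorname{tr}(d)$ cancel, the paper \emph{adds} the identically zero quantity $-\tfrac{2}{\gamma(\alpha+1)}\tfrac{D_j(\alpha+1)}{D_0(\alpha+1)}$ to form the symmetric combination $\tfrac{A_j}{A_0}+\tfrac{1-\gamma}{\gamma\alpha}\tfrac{D_j(2\alpha)}{D_0(2\alpha)}-\tfrac{2}{\gamma(\alpha+1)}\tfrac{D_j(\alpha+1)}{D_0(\alpha+1)}$, in which the coefficients $l_i^-,k_i^-$ of $\beta_1,\beta_2,\operatorname{tr}(\mathbf a),E(b),\operatorname{tr}(d)$ vanish as algebraic identities in the beta ratios; the paper also gives a conceptual shortcut---since $\mathcal{L}^-_\alpha\ge0$ on Euclidean space for \emph{every} choice of $\xi_-$, these free-parameter coefficients are forced to vanish. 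Your back-substitution is equivalent, but be aware that the cancellation is not a consequence of the single constraint $D_2=0$ alone; it encodes the same algebraic identity the paper isolates.
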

	
	\begin{rem}\label{Rk_choose_of_a}
		If we choose 
		\[
		\mathbf{a} = \frac{2(\alpha+1)}{3\chi} \operatorname{Rc}(p),
		\]
		then the coefficient of the $|\operatorname{Rc}|^2(p)$ term is minimized in expansions \eqref{expansion_-} and \eqref{expansion_+_2}.
		Here and below, we denote by $\mathscr{B}_p(V)$ the set of all such functions:
		\begin{equation}\label{B_def}
			\mathscr{B}_p(V) = \biggl\{\eta(x,t) \biggm| 
			\begin{aligned}
				&\eta \text{ have the expansion \eqref{expansion_of_xi} such that  $p \in \operatorname{supp}(\eta) \subset\subset V$ with $\textbf{a}=\frac{2(\alpha+1)}{3\chi} \operatorname{Rc}(p)$,  } \\
				&\text{  $\int_V (u_-\eta)^{\alpha+1}d\mu \equiv 1$ if $0<\alpha<1$ and $\int_V (u_+\eta)^{2\alpha}d\mu \equiv 1$ if  $1<\alpha\le \frac{n}{n-2}$.} 
			\end{aligned}
			\biggr\}
		\end{equation}
	\end{rem}
	\begin{proof}[Proof of Theorem \ref{expansion_L_a}] \textbf{(i)}
		By Lemma \ref{key_lemma},  we get for
		$0<\alpha<1$: 
		\begin{equation}\label{expan_c_i1}
			\begin{aligned}
				&\quad\mathcal{L}^-_{\alpha}(V,g, u_-(x,t)\xi_-(x,t),\tau_-(t))\\
				&=\left(\frac{\mathfrak{m}\Gamma_{\alpha}}{1+\Gamma_{\alpha}}\left(\frac{A_0}{D_0(\alpha+1)^{\frac{2}{\alpha+1}}}\right)^{-1}\frac{D_0(2\alpha)}{D_0(\alpha+1)^{\frac{2\alpha}{\alpha+1}}}\right)^{\frac{1+\Gamma_{\alpha}}{1+2\Gamma_{\alpha}}}	\frac{ A_0}{ D_0(\alpha+1)^{\frac{2}{\alpha+1}}} \left(A_0^{-1} t^{1-\frac{n}{2}}  \int_V \left|\nabla
				\left( \operatorname{H}(\frac{d(p,x)}{\sqrt{t}})\xi_-(x,t)\right)\right|^2 d\mu \right)
				\\
				&\quad +\mathfrak{m}\left(\frac{\mathfrak{m}\Gamma_{\alpha}}{1+\Gamma_{\alpha}}\left(\frac{A_0}{D_0(\alpha+1)^{\frac{2}{\alpha+1}}}\right)^{-1}\frac{D_0(2\alpha)}{D_0(\alpha+1)^{\frac{2\alpha}{\alpha+1}}}\right)^{-\frac{\Gamma_{\alpha}}{1+2\Gamma_{\alpha}}}
				\frac{D_0(2\alpha)}{ D_0(\alpha+1)^{\frac{2\alpha}{\alpha+1}}}
				\left(D_0(2\alpha)^{-1}t^{-\frac{n}{2}} 
				\int_V 
				\left( \operatorname{H}(\frac{d(p,x)}{\sqrt{t}})\xi_-(x,t)\right)^{2\alpha}\right)\\
				&\quad -\frac{2\Gamma_{\alpha}+1}{\Gamma_{\alpha}}\mathfrak{m}^{1-\frac{\Gamma_{\alpha}}{2\Gamma_{\alpha}+1}}\Sigma^-_{\alpha}+o(t^2)\\
				&=\mathfrak{m}^{1-\frac{\Gamma_{\alpha}}{2\Gamma_{\alpha}+1}}\Sigma^-_{\alpha}\left(1+\frac{A_1}{A_0}t+\frac{A_2}{A_0}t^2+o(t^2)\right)
				+\frac{\Gamma_{\alpha}+1}{\Gamma_{\alpha}}m^{1-\frac{\Gamma_{\alpha}}{2\Gamma_{\alpha}+1}}\Sigma^-_{\alpha}\left(1+\frac{D_1(2\alpha)}{D_0(2\alpha)}t+\frac{D_2(2\alpha)}{D_0(2\alpha)}t^2+o(t^2)\right) \\
				&\quad -\frac{2\Gamma_{\alpha}+1}{\Gamma_{\alpha}}\mathfrak{m}^{1-\frac{\Gamma_{\alpha}}{2\Gamma_{\alpha}+1}}\Sigma^-_{\alpha}+o(t^2)\\
				&=\mathfrak{m}^{1-\frac{\Gamma_{\alpha}}{2\Gamma_{\alpha}+1}}\Sigma^-_{\alpha}\left\{\left(\frac{A_1}{A_0}+\frac{\Gamma_{\alpha}+1}{\Gamma_{\alpha}}\frac{D_1(2\alpha)}{D_0(2\alpha)}\right)t+\left(\frac{A_2}{A_0}+\frac{\Gamma_{\alpha}+1}{\Gamma_{\alpha}}\frac{D_2(2\alpha)}{D_0(2\alpha)}\right)t^2+o(t^2)\right\}\\		&=\mathfrak{m}^{1-\frac{\Gamma_{\alpha}}{2\Gamma_{\alpha}+1}}\Sigma^-_{\alpha} \left\{\left(\frac{A_1}{A_0}+\frac{(1-\gamma)}{\gamma\alpha}\frac{D_1(2\alpha)}{D_0(2\alpha)}-\frac{2}{\gamma(\alpha+1)}\frac{D_1(\alpha+1)}{D_0(\alpha+1)}\right)t\right.\\
				&\left.\quad\quad+\left(\frac{A_2}{A_0}+\frac{(1-\gamma)}{\gamma\alpha}\frac{D_2(2\alpha)}{D_0(2\alpha)}-\frac{2}{\gamma(\alpha+1)}\frac{D_2(\alpha+1)}{D_0(\alpha+1)}\right)t^2+o(t^2)\right\},
			\end{aligned}		
		\end{equation}
		where we have used  
		$$
		\mathcal{N}_{\alpha,n} = \frac{A_0^{\frac{\gamma}{2}}D_0(2\alpha)^{\frac{1-\gamma}{2\alpha}}}{D_0(\alpha+1)^{\frac{1}{\alpha+1}}}, \quad 
		\Sigma^-_{\alpha} = \left(\frac{\Gamma_{\alpha}}{\Gamma_{\alpha}+1}\right)^{1-\frac{\Gamma_{\alpha}}{2\Gamma_{\alpha}+1}} \mathcal{N}_{\alpha,n}^{\frac{2}{\gamma}\cdot\frac{\Gamma_{\alpha}}{2\Gamma_{\alpha}+1}}, \quad
		\frac{\Gamma_{\alpha}+1}{\Gamma_{\alpha}} = \frac{1-\gamma}{\gamma \alpha},
		$$
		and by 
		$
		\int_V (u_- \xi_-)^{\alpha+1} d\mu \equiv 1
		$
		and Lemma \ref{key_lemma}, 
		\begin{equation}\label{normal_1}
			\frac{D_1(\alpha+1)}{D_0(\alpha+1)} = 0 \quad \text{and} \quad \frac{D_2(\alpha+1)}{D_0(\alpha+1)} = 0.
		\end{equation}
		
		Observing from Lemma \ref{key_lemma},
		for $0<\alpha<1$, we can write that
		\begin{equation}\label{keylemma:1st}
			\begin{aligned}
				\frac{A_1}{A_0}+\frac{1-\gamma}{\gamma\alpha}\frac{D_1(2\alpha)}{D_0(2\alpha)}-\frac{2}{\gamma(\alpha+1)}\frac{D_1(\alpha+1)}{D_0(\alpha+1)}:=c_1^{-}\operatorname{Sc}(p)+l_1^{-}\beta_1+l_2^{-}\operatorname{tr}(\textbf{a}),
			\end{aligned}		
		\end{equation}
		and
		\begin{equation}\label{ot2_term}
			\begin{aligned}
				&\frac{A_2}{A_0}+\frac{1-\gamma}{\gamma\alpha}\frac{D_2(2\alpha)}{D_0(2\alpha)}-\frac{2}{\gamma(\alpha+1)}\frac{D_2(\alpha+1)}{D_0(\alpha+1)}\\
				:=& c_2^{-}\operatorname{tr}(\textbf{a}^2)+c_3^{-}\operatorname{E}(v)+c_4^{-}\operatorname{E}(\textbf{a} \otimes \textbf{a})+c_5^{-}\operatorname{E}(\textbf{a} \otimes \operatorname{Rc})\\
				&+c_6^{-}\beta_1\operatorname{tr}(\textbf{a})+c_7^{-}\beta_1^2+c_8^{-}\beta_1 \operatorname{Sc}(p)+k_1^{-}\beta_2+k_2^{-}\operatorname{E}(b)+k_3^{-}\operatorname{tr}(d),
			\end{aligned}		
		\end{equation}
		Here, the constants  $c_i^{-}$, $l_i^{-}$ and $k_i^{-}$, depend only on $n$ and $\alpha$.

		\textbf{We next calculate $c_1^{-}$, $c_2^{-}$, $c_3^{-}$, $c_4^{-}$ and $c_5^{-}$ term by term using the Lemma \ref{key_lemma}.}

		Using Lemma \ref{key_lemma} and \eqref{keylemma:1st}, we get
		\begin{equation}\label{c_1_-}
			\begin{aligned}
				c_1^-=& \frac{1}{6n}\left(\frac{|\alpha-1|}{8}\right)^{-1}\left\{-	\frac{\mathrm{~B}(\frac{n}{2}+2,\frac{2\alpha}{1-\alpha}+1)}{\mathrm{~B}(\frac{n}{2}+1,\frac{2\alpha}{1-\alpha}+1)}-\frac{1-\gamma}{\gamma\alpha}\frac{\mathrm{~B}(\frac{n}{2}+1,\frac{2\alpha}{1-\alpha}+1)}{\mathrm{~B}(\frac{n}{2},\frac{2\alpha}{1-\alpha}+1)}+\frac{2}{\gamma(\alpha+1)}\frac{\mathrm{~B}(\frac{n}{2}+1,\frac{1+\alpha}{1-\alpha}+1)}{\mathrm{~B}(\frac{n}{2},\frac{1+\alpha}{1-\alpha}+1)}\right\}\\
				=&\frac{1}{6n}\left(\frac{1-\alpha}{8}\right)^{-1}\left(-\frac{\frac{n}{2}+1}{\frac{2\alpha}{1-\alpha}+\frac{n}{2}+2}-\frac{1-\gamma}{\gamma\alpha}\frac{\frac{n}{2}}{\frac{2\alpha}{1-\alpha}+\frac{n}{2}+1}+\frac{2}{\gamma(\alpha+1)}\frac{\frac{n}{2}}{\frac{1+\alpha}{1-\alpha}+\frac{n}{2}+1}\right)\\
				=&\zeta_1,
			\end{aligned}   
		\end{equation}
		where we have used 
		\begin{equation}\label{thm:beta_ii_1}
			\frac{\mathrm{~B}(p+1,q)}{\mathrm{~B}(p,q)}=\frac{p}{p+q} \text{ when $p>0$ and $q>0$.}
		\end{equation}
		The final simplification step is computed using the mathematical software Maple.
		
		Using Lemma \ref{key_lemma} and \eqref{ot2_term}, we get
		\begin{equation}\label{c_2_-}
			\begin{aligned}
				c_2^-=&\frac{16}{n}	\frac{\mathrm{~B}(\frac{n}{2}+1,\frac{2\alpha}{1-\alpha}+3)}{\mathrm{~B}(\frac{n}{2}+1,\frac{2\alpha}{1-\alpha}+1)}=128(\alpha+1)\zeta_2,\\
				c_3^-=& \frac{1}{n(n+2)}\left(\frac{|\alpha-1|}{8}\right)^{-2}\left\{
				\frac{\mathrm{~B}(\frac{n}{2}+3,\frac{2\alpha}{1-\alpha}+1)}{\mathrm{~B}(\frac{n}{2}+1,\frac{\alpha\gamma}{1-\alpha}+1)}
				+\frac{1-\gamma}{2\alpha}\frac{\mathrm{~B}(\frac{n}{2}+2,\frac{2\alpha}{1-\alpha}+1)}{\mathrm{~B}(\frac{n}{2},\frac{2\alpha}{1-\alpha}+1)}\right.\\
				&\left.-\frac{2}{\gamma(1+\alpha)}\frac{\mathrm{~B}(\frac{n}{2}+2,\frac{1+\alpha}{1-\alpha}
					+1)}{\mathrm{~B}(\frac{n}{2},\frac{1+\alpha}{1-\alpha}
					+1)}  \right\}\\
				=&640\zeta_2,\\
				c_4^-=& \frac{1}{n(n+2)}\left(\frac{|\alpha-1|}{8}\right)^{-2}\left\{\frac{(1-\gamma)(2\alpha-2)}{4\gamma}\frac{\mathrm{~B}(\frac{n}{2}+2,\frac{2\alpha}{1-\alpha}+1)}{\mathrm{~B}(\frac{n}{2},\frac{2\alpha}{1-\alpha}+1)}-\frac{(\alpha-1)}{4\gamma}\frac{\mathrm{~B}(\frac{n}{2}+2,\frac{1+\alpha}{1-\alpha}+1)}{\mathrm{~B}(\frac{n}{2},\frac{1+\alpha}{1-\alpha}+1)}\right\}\\
				=&\left(96\alpha^2-160\alpha+16n(\alpha-1)^2\right)\zeta_2,\\
				c_5^-=& \frac{1}{n(n+2)}\left(\frac{|\alpha-1|}{8}\right)^{-2}\left\{-\frac{1}{6}\frac{\mathrm{~B}(\frac{n}{2}+3,\frac{2\alpha}{1-\alpha}+1)}{\mathrm{~B}(\frac{n}{2}+1,\frac{2\alpha}{1-\alpha}+1)}-\frac{1-\gamma}{\gamma\alpha}\frac{2\alpha}{12} \frac{\mathrm{~B}(\frac{n}{2}+2,\frac{2\alpha}{1-\alpha}+1)}{\mathrm{~B}(\frac{n}{2},\frac{2\alpha}{1-\alpha}+1)}
				\right.\\
				&\left.+	\frac{2}{\gamma(1+\alpha)}\frac{\alpha+1}{12}\frac{\mathrm{~B}(\frac{n}{2}+2,\frac{1+\alpha}{1-\alpha}+1)}{\mathrm{~B}(\frac{n}{2},\frac{1+\alpha}{1-\alpha}+1)} \right\}+\frac{1}{n(n+2)}\left(\frac{|\alpha-1|}{8}\right)^{-1}\frac{4}{3}\frac{\mathrm{~B}(\frac{n}{2}+2,\frac{2\alpha}{1-\alpha}+2)}{\mathrm{~B}(\frac{n}{2}+1,\frac{2\alpha}{1-\alpha}+1)}\\
				=&-\frac{64(\alpha+1)}{3 }\zeta_2.
			\end{aligned}		
		\end{equation}
		where we have used \eqref{thm:beta_eq_1},
		\begin{equation}\label{thm:beta_eq_1}
			\frac{\mathrm{~B}(p,q+2)}{\mathrm{~B}(p,q)}=\frac{q+1}{p+q+1}\cdot\frac{q}{p+q} \text{ when $p>0$ and $q>0$.}
		\end{equation}
		and $\mathrm{~B}(p,q)=\mathrm{~B}(q,p)$. The final simplification steps are computed using the mathematical software Maple.
		
		\textbf{Next, we calculate $l_i^{-}$ and $k_i^{-}$. }
		First, we have
		$$l_1^{-}=k_1^-=1+\frac{1-\gamma}{\gamma\alpha}\times \frac{2\alpha}{2}-\frac{2}{\gamma(\alpha+1)}\times \frac{\alpha+1}{2}=0.$$
		We can use the similar calculations as $c_i^-$, $l_1^{-}$ and $k_1^{-}$ to get $l_2^-=k_2^-=k_3^-=0$. Another way to see why $l_2^-=k_2^-=k_3^-=0$ is the following: By the Gagliardo-Nirenberg inequality of the Euclidean space, we know that
		$\mathcal{L}^-_{\alpha}(V,g, u,t)\ge 0$  for all $u$ in $ W_0^{1,2}(V)$ if $\operatorname{\operatorname{Rm}}\equiv 0$ on $V$. Then we find that when $\operatorname{\operatorname{Rm}}\equiv 0$ on $V$,
		\begin{equation}\label{1st_term_0}
			\quad\mathcal{L}^-_{\alpha}(V,g, u,t)=l_1^{-}\beta_1t+l_2^-\operatorname{tr}(\textbf{a})t+o(t)\ge 0,
		\end{equation}
		for any $\textbf{a}$. Although
		$\operatorname{tr}(\textbf{a})$, $\beta_1$ and $\operatorname{Sc}(p)$ should safisfy the following relation by  \eqref{normal_1}:
		\begin{equation}\label{normal_1_condition}
			\begin{aligned}
				&\frac{D_1(\alpha+1)}{D_0(\alpha+1)}=\frac{\alpha+1}{2n}\left(\frac{|\alpha-1|}{8}\right)^{-1}\frac{\mathrm{B}(\frac{n}{2}+1,\frac{1+\alpha}{1-\alpha}+1)}{\mathrm{B}(\frac{n}{2},\frac{1+\alpha}{1-\alpha}+1)}\operatorname{tr}(\textbf{a})+\frac{\alpha+1}{2}\beta_1\\
				&-\frac{1}{6n}
				\left(\frac{|\alpha-1|}{8}\right)^{-1}\frac{\mathrm{B}(\frac{n}{2}+1,\frac{1+\alpha}{1-\alpha}+1)}{\mathrm{B}(\frac{n}{2},\frac{1+\alpha}{1-\alpha}+1)}\operatorname{Sc}(p)=0,
			\end{aligned}
		\end{equation}
		we conclude from \eqref{1st_term_0} that $l_2^-=0$  since  $l_1^{-}=0$ in \eqref{1st_term_0}. For the same reasons, we can also
		get  $k_2^-=k_3^-=0$.

		Since
		$
		E(\textbf{a} \otimes \operatorname{Rc})=\operatorname{tr}(\textbf{a})\operatorname{Sc}(p)+2\sum\limits_{i j=1}^n \textbf{a}_{ij}R_{ij}(p),
		$
		$
		E(\textbf{a} \otimes \textbf{a})=\left(\operatorname{tr}(\textbf{a})\right)^2+2 \operatorname{tr} (a^2)
		$
		and
		$$
		\begin{gathered}
			E(v) =\frac{1}{24} \sum_{i j=1}^n\left\{-\frac{3}{5} \nabla_{i i} R_{j j}-\frac{6}{5} \nabla_{i j} R_{i j}+\frac{1}{3} R_{i i}R_{j j}\right. \\ \left.\quad+\frac{2}{3} R_{i j}^2-\frac{2}{15} \sum_{s, t=1}^n\left(R_{i s i t} R_{j s j t}+R_{i s j t}^2+R_{i s j t} R_{i t j s}\right)\right\} \\ =\frac{1}{360}\left(5 \operatorname{Sc}^2+8|\operatorname{Rc}|^2-3|\operatorname{Rm}|^2-18 \Delta\operatorname{Sc}\right)(p) 
		\end{gathered}
		$$
		(c.f. P197 in \cite{G}), we have 
		$$
		\begin{aligned}
			&\frac{A_2}{A_0}+\frac{1-\gamma}{\alpha\gamma}\frac{D_2(2\alpha)}{D_0(2\alpha)}-\frac{2}{\gamma(1+\alpha)}\frac{D_2(\alpha+1)}{D_0(\alpha+1)}\\
			=&c_2^-\operatorname{tr}(\textbf{a}^2)+c_3^-\operatorname{E}(v)+c_4^-\operatorname{E}(\textbf{a} \otimes \textbf{a})+c_5^-\operatorname{E}(\textbf{a} \otimes \operatorname{Rc})+c_6^-\beta_1\operatorname{tr}(\textbf{a})+c_7^-\beta_1^2+c_8^-\beta_1 \operatorname{Sc}(p)\\
			=&c_2^-\operatorname{tr}(\textbf{a}^2)+\frac{c_3^-}{360}\left(5 \operatorname{Sc}^2+8|\operatorname{Rc}|^2-3|\operatorname{Rm}|^2-18 \Delta\operatorname{Sc}\right)(p)+c_4^-\left((\operatorname{tr}(\textbf{a}))^2+2\operatorname{tr}(\textbf{a}^2)\right)\\
			&+c_5^-\left(\operatorname{tr}(\textbf{a})\operatorname{Sc}(p)+2 \sum\limits_{i j=1}^n \textbf{a}_{ij}R_{ij}(p)\right)+c_6^-\beta_1\operatorname{tr}(\textbf{a})+c_7^-\beta_1^2+c_8^-\beta_1 \operatorname{Sc}(p)\\
			= &-32\zeta_2\Delta \operatorname{Sc}(p)+32\zeta_2\operatorname{\uppercase\expandafter{\romannumeral2}}+\operatorname{\uppercase\expandafter{\romannumeral3}}^-,\\
		\end{aligned}
		$$	
		where
		$$
		\begin{aligned}
			\operatorname{\uppercase\expandafter{\romannumeral2}}:=&\frac{1}{32\zeta_2}\Big\{\left(c_2^-+2c_4^-\right)\operatorname{tr}(\textbf{a}^2)+\frac{c_3^-}{360}\left(8|\operatorname{Rc}|^2-3|\operatorname{Rm}|^2\right)(p)+2c_5^-\sum\limits_{i j=1}^n \textbf{a}_{ij}R_{ij}(p)\Big\}\\
			=&\left(\chi\operatorname{tr}(\textbf{a}^2)-\frac{4(\alpha+1) }{3}\sum\limits_{i j=1}^n\textbf{a}_{ij}R_{ij}(p)+\frac{4}{9}|\operatorname{Rc}|^2(p)\right)-\frac{1}{6}|\operatorname{Rm}|^2(p)\\
			=& \chi\left|\textbf{a}-\frac{2(\alpha+1) \operatorname{Rc}(p)}{3\chi}\right|^2+\frac{4((n+5) \alpha-n-3)(\alpha-1) }{9\chi}|\operatorname{Rc}|^2(p)-\frac{1}{6}|\operatorname{Rm}|^2(p)\\
		\end{aligned}
		$$	
		and by \eqref{normal_1_condition},
		\begin{equation}\label{III_-}
			\begin{aligned}
				\operatorname{\uppercase\expandafter{\romannumeral3}}^-:=&\frac{c_3^-}{72}\operatorname{Sc}^2(p)+c_4^-(\operatorname{tr}(\textbf{a}))^2 +c_5^-\operatorname{tr}(\textbf{a})\operatorname{Sc}(p)+c_6^-\beta_1\operatorname{tr}(\textbf{a})+c_7^-\beta_1^2+c_8^-\beta_1 \operatorname{Sc}(p)\\
				=& C_1^-\operatorname{Sc}^2(p)+C_2^-\operatorname{tr}(\textbf{a})\operatorname{Sc}(p)+C_3^-\operatorname{tr}(\textbf{a})^2.
			\end{aligned}
		\end{equation}
		Here $C_1^-$, $C_2^-$ and $C_3^-$ are the constants depending on
		$\alpha,n$. Hence, $
		\operatorname{\uppercase\expandafter{\romannumeral3}}^-=0 \text{ if } \operatorname{Sc}(p)=0 \text{ and } \operatorname{tr}(\textbf{a})=0$.

		\textbf{(ii)}
		By \eqref{expansion_D2rd} and \eqref{expansion_A2rd},
		we get that for $1<\alpha\le \frac{n}{n-2}$ and $\alpha<\frac{n+6}{n+2}$:
		\begin{equation}\label{expan_c_i2}
			\begin{aligned}
				&\quad\mathcal{L}^+_{\alpha}(V,g, u_+(x,t)\xi_+(x,t),\tau_+(t))\\
				&=\left(\frac{\mathfrak{m}\Theta_{\alpha}}{1-2\Theta_{\alpha}}\left(\frac{A_0}{D_0(2\alpha)^{\frac{1}{\alpha}}}\right)^{-1}\frac{D_0(\alpha+1)}{ D_0(2\alpha)^{\frac{\alpha+1}{2\alpha}}}\right)^{\frac{1-2\Theta_{\alpha}}{1-\Theta_{\alpha}}}	
				\frac{A_0}{D_0(2\alpha)^{\frac{1}{\alpha}}}
				\left(A_0^{-1}  t^{1-\frac{n}{2}} \int_V \left|\nabla
				\left( \operatorname{H}(\frac{d(p,x)}{\sqrt{t}})\xi_+(x,t)\right)\right|^2 d\mu \right)
				\\
				&\quad +\mathfrak{m}\left(\frac{\mathfrak{m}\Theta_{\alpha}}{1-2\Theta_{\alpha}}\left(\frac{A_0}{D_0(2\alpha)^{\frac{1}{\alpha}}}\right)^{-1}\frac{D_0(\alpha+1)}{ D_0(2\alpha)^{\frac{\alpha+1}{2\alpha}}}\right)^{-\frac{\Theta_{\alpha}}{1-\Theta_{\alpha}}}	
				\frac{D_0(\alpha+1)}{ D_0(2\alpha)^{\frac{\alpha+1}{2\alpha}}}
				\left(D_0(\alpha+1)^{-1} t^{-\frac{n}{2}}  \int_V 
				\left( \operatorname{H}(\frac{d(p,x)}{\sqrt{t}})\xi_+(x,t)\right)^{\alpha+1}\right)\\
				&\quad -\frac{1-\Theta_{\alpha}}{\Theta_{\alpha}}\mathfrak{m}^{1-\frac{\Theta_{\alpha}}{1-\Theta_{\alpha}}}\Sigma^+_{\alpha}+o(t^2)\\
				&=\mathfrak{m}^{1-\frac{\Theta_{\alpha}}{1-\Theta_{\alpha}}}\Sigma^+_{\alpha}\left(1+\frac{A_1}{A_0}t+\frac{A_2}{A_0}t^2+o(t^2)\right)+\mathfrak{m}^{1-\frac{\Theta_{\alpha}}{1-\Theta_{\alpha}}}\frac{1-2\Theta_{\alpha}}{\Theta_{\alpha}}\Sigma^+_{\alpha}\left(1+\frac{D_1(\alpha+1)}{D_0(\alpha+1)}t+\frac{D_2(\alpha+1)}{D_0(\alpha+1)}t^2+o(t^2)\right) \\
				&\quad-\frac{1-\Theta_{\alpha}}{\Theta_{\alpha}}\mathfrak{m}^{1-\frac{\Theta_{\alpha}}{1-\Theta_{\alpha}}}\Sigma^+_{\alpha}+o(t^2)\\
				&=\mathfrak{m}^{1-\frac{\Theta_{\alpha}}{1-\Theta_{\alpha}}}\Sigma^+_{\alpha}\left\{\left(\frac{A_1}{A_0}+\frac{1-2\Theta_{\alpha}}{\Theta_{\alpha}}\frac{D_1(\alpha+1)}{D_0(\alpha+1)}\right)t+\left(\frac{A_2}{A_0}+\frac{1-2\Theta_{\alpha}}{\Theta_{\alpha}}\frac{D_2(\alpha+1)}{D_0(\alpha+1)}\right)t^2+o(t^2)\right\}\\		&=\mathfrak{m}^{1-\frac{\Theta_{\alpha}}{1-\Theta_{\alpha}}}\Sigma^+_{\alpha}  \left\{\left(\frac{A_1}{A_0}+\frac{2(1-\theta)}{\theta(1+\alpha)}\frac{D_1(\alpha+1)}{D_0(\alpha+1)}-\frac{1}{\theta\alpha}\frac{D_1(2\alpha)}{D_0(2\alpha)}\right)t\right.\\
				&\left.\quad\quad+\left(\frac{A_2}{A_0}+\frac{2(1-\theta)}{\theta(1+\alpha)}\frac{D_2(\alpha+1)}{D_0(\alpha+1)}-\frac{1}{\theta\alpha}\frac{D_2(2\alpha)}{D_0(2\alpha)}\right)t^2+o(t^2)\right\}\\		&=\mathfrak{m}^{1-\frac{\Theta_{\alpha}}{1-\Theta_{\alpha}}}\Sigma^+_{\alpha}  \left\{\left(\frac{A_1}{A_0}-\frac{2}{\gamma(1+\alpha)}\frac{D_1(\alpha+1)}{D_0(\alpha+1)}+\frac{1-\gamma}{\gamma\alpha}\frac{D_1(2\alpha)}{D_0(2\alpha)}\right)t\right.\\
				&\left.\quad\quad+\left(\frac{A_2}{A_0}-\frac{2}{\gamma(1+\alpha)}\frac{D_2(\alpha+1)}{D_0(\alpha+1)}+\frac{1-\gamma}{\gamma\alpha}\frac{D_2(2\alpha)}{D_0(2\alpha)}\right)t^2+o(t^2)\right\},
			\end{aligned}		
		\end{equation}
		where we have used that 
		$$ \text{$\mathcal{G}_{\alpha,n}=\frac{A_0^{\frac{\theta}{2}}D_0(\alpha+1)^{\frac{1-\theta}{\alpha+1}}}{D_0(2\alpha)^{\frac{1}{2\alpha}}}$,\  $\Sigma^+_{\alpha}= \left(\frac{\Theta_{\alpha}}{1-2\Theta_{\alpha}}\right)^{1-\frac{\Theta_{\alpha}}{1-\Theta_{\alpha}}} \mathcal{G}_{\alpha,n}^{\frac{2}{\theta}\cdot\frac{\Theta_{\alpha}}{(1-\Theta_{\alpha})}}$,\  $\frac{1-2\Theta_{\alpha}}{\Theta_{\alpha}}=\frac{2(1-\theta)}{\theta(1+\alpha)}$,\  $\frac{1}{\theta}+\frac{1}{\gamma}=1$}, $$ and by
		$\int_V (u_+\xi_+)^{\alpha+1}d\mu \equiv 1 $ and Lemma \ref{key_lemma}, 
		\begin{equation}\label{normal_2}
			\frac{D_1(2\alpha)}{D_0(2\alpha)}=0 \text{\quad and\quad } \frac{D_2(2\alpha)}{D_0(2\alpha)}=0.
		\end{equation} 
		Similar to \eqref{expan_c_i2}, when $1<\alpha\le \frac{n}{n-2}$ and $\alpha<\frac{n+4}{n}$, by \eqref{expansion_D1st} and \eqref{expansion_A1st}, we also have
		\begin{equation}
			\begin{aligned}
				\mathcal{L}^+_{\alpha}(V,g, u_+(x,t)\xi_+(x,t),\tau_+(t))	=\mathfrak{m}^{1-\frac{\Theta_{\alpha}}{1-\Theta_{\alpha}}}\Sigma^+_{\alpha}  \{\left(\frac{A_1}{A_0}-\frac{2}{\gamma(1+\alpha)}\frac{D_1(\alpha+1)}{D_0(\alpha+1)}+\frac{1-\gamma}{\gamma\alpha}\frac{D_1(2\alpha)}{D_0(2\alpha)}\right)t+o(t)\}.
			\end{aligned}		
		\end{equation}
		Observing from Lemma \ref{key_lemma}, for $\alpha>1$,
		we can write that
		\begin{equation}
			\begin{aligned}
				\frac{A_1}{A_0}+\frac{1-\gamma}{\gamma\alpha}\frac{D_1(2\alpha)}{D_0(2\alpha)}-\frac{2}{\gamma(\alpha+1)}\frac{D_1(\alpha+1)}{D_0(\alpha+1)}:=c_1^{+}\operatorname{Sc}(p)+l_1^{+}\beta_1+l_2^{+}\operatorname{tr}(\textbf{a})
			\end{aligned}		
		\end{equation}
		and
		\begin{equation}
			\begin{aligned}
				&\frac{A_2}{A_0}+\frac{1-\gamma}{\gamma\alpha}\frac{D_2(2\alpha)}{D_0(2\alpha)}-\frac{2}{\gamma(\alpha+1)}\frac{D_2(\alpha+1)}{D_0(\alpha+1)}:\\
				:=& c_2^{+}\operatorname{tr}(\textbf{a}^2)+c_3^{-}\operatorname{E}(v)+c_4^{+}\operatorname{E}(\textbf{a} \otimes \textbf{a})+c_5^{+}\operatorname{E}(\textbf{a} \otimes \operatorname{Rc})\\
				&+c_6^{+}\beta_1\operatorname{tr}(\textbf{a})+c_7^{+}\beta_1^2+c_8^{+}\beta_1 \operatorname{Sc}(p)+k_1^+\beta_2+k_2^{+}\operatorname{E}(b)+k_3^{+}\operatorname{tr}(d)
			\end{aligned}		
		\end{equation}
		Here, the constants $c_i^{+}$, $l_i^+$, $k_i^+$ depend only on $n$ and $\alpha$. 
		
		\textbf{Next, we demonstrate that
			\begin{align*}
				c_i^{+}(\alpha,n) &= c_i^{-}(\alpha,n) \quad \text{(ignoring the domain of variable $\alpha$)},
				l_i^+ = 0, 
				k_i^+ = 0.
		\end{align*}}
		
		Indeed, by \eqref{def_mathcsr_B}
		\begin{equation}\label{thm:beta_ii_2}
			\frac{\mathscr{B}(p+1,q)}{\mathscr{B}(p,q)}=\frac{\mathrm{~B}(p+1,-q-p)}{\mathrm{~B}(p,-p-q+1)}=-\frac{p}{p+q} \text{ when $p>0$ and $q<-p$},
		\end{equation}
		and
		\begin{equation}\label{thm:beta_eq_2}
			\frac{\mathscr{B}(p,q+2)}{\mathscr{B}(p,q)}=\frac{1}{\frac{\mathrm{~B}(p,-p-q+1)}{\mathrm{~B}(p,-q-p-1)}}=\frac{1}{\frac{-p-q}{-p-q+p}\cdot \frac{-p-q-1}{-p-q-1+p}}=\frac{q+1}{p+q+1}\cdot\frac{q}{p+q} \text{ when $p>0$ and $q<-p-1$,}  
		\end{equation}
		A comparison of \eqref{thm:beta_ii_1} with \eqref{thm:beta_ii_2} reveals that $\frac{\mathscr{B}(p+1,q)}{\mathscr{B}(p,q)}$ changes sign when comparing the cases $q<-p$ and $q>0$. This shows that
		$$
		\begin{aligned}
			c_1^+
			=&-\frac{1}{6n}\left(\frac{|1-\alpha|}{8}\right)^{-1}\left\{-	\frac{\mathscr{B}(\frac{n}{2}+2,\frac{2\alpha}{1-\alpha}+1)}{\mathscr{B}(\frac{n}{2}+1,\frac{2\alpha}{1-\alpha}+1)}
			+\frac{2}{\gamma(\alpha+1)}\frac{\mathscr{B}(\frac{n}{2}+1,\frac{1+\alpha}{1-\alpha}+1)}{\mathscr{B}(\frac{n}{2},\frac{1+\alpha}{1-\alpha}+1)} 	
			-\frac{1-\gamma}{\gamma\alpha}\frac{\mathscr{B}(\frac{n}{2}+1,\frac{2\alpha}{1-\alpha}+1)}{\mathscr{B}(\frac{n}{2},\frac{2\alpha}{1-\alpha}+1)}\right\}\\
			=&\zeta_1.
		\end{aligned}
		$$
		Here, $c_1^+(\alpha,n)=c_1^-(\alpha,n)$ since
		both $|1-\alpha|$ and $\frac{\mathscr{B}(p+1,q)}{\mathscr{B}(p,q)}$ change sign compared to \eqref{c_1_-}. 
		
		A comparison of \eqref{thm:beta_ii_1} with \eqref{thm:beta_ii_2} reveals that $\frac{\mathscr{B}(p+2,q)}{\mathscr{B}(p,q)}$ remains the same when comparing the cases $q<-p-1$ and $q>0$.
		Compared to \eqref{c_2_-}, we have $c_2^+(\alpha,n)=c_2^-(\alpha,n)$ (ignoring the domain of variable $\alpha$), that is,
		\begin{equation}\label{c_2_+}
			\begin{aligned}
				c_2^+=\frac{16}{n}	\frac{\mathscr{~B}(\frac{n}{2}+1,\frac{2\alpha}{1-\alpha}+3)}{\mathscr{~B}(\frac{n}{2}+1,\frac{2\alpha}{1-\alpha}+1)}=128(\alpha+1)\zeta_2.
			\end{aligned}
		\end{equation}
		
		Similarly, we can show that $c_i^{+}(\alpha,n) = c_i^{-}(\alpha,n)$,  $l_i^+=0$, $k_i^+=0$ based on \eqref{thm:beta_ii_1},  \eqref{thm:beta_ii_2}, \eqref{thm:beta_ii_1} and \eqref{thm:beta_ii_2}. 
		Notice that for the case $\alpha>1$,$\operatorname{tr}(\textbf{a})$, $\beta_1$ and $\operatorname{Sc}(p)$, we should satisfy the following relation by \eqref{normal_2}:
		\begin{equation}\label{normal_2_condition}
			\begin{aligned}
				&\frac{D_1(2\alpha)}{D_0(2\alpha)}=\frac{2\alpha}{2n}\left(\frac{|\alpha-1|}{8}\right)^{-1}\frac{\mathscr{B}(\frac{n}{2}+1,\frac{2\alpha}{1-\alpha}+1)}{\mathscr{B}(\frac{n}{2},\frac{2\alpha}{1-\alpha}+1)}\operatorname{tr}(\textbf{a})+\frac{2\alpha}{2}\beta_1\\
				&-\frac{1}{6n}
				\left(\frac{|\alpha-1|}{8}\right)^{-1}\frac{\mathscr{B}(\frac{n}{2}+1,\frac{2\alpha}{1-\alpha}+1)}{\mathscr{B}(\frac{n}{2},\frac{2\alpha}{1-\alpha}+1)}\operatorname{Sc}(p)=0.
			\end{aligned}
		\end{equation}  
		Hence,  we still have
		\begin{equation}\label{III_+}
			\begin{aligned}
				\operatorname{\uppercase\expandafter{\romannumeral3}}^+:= C_1^+\operatorname{Sc}^2(p)+C_2^+\operatorname{tr}(\textbf{a})\operatorname{Sc}(p)+C_3^+\operatorname{tr}(\textbf{a})^2.
			\end{aligned}
		\end{equation}
		Here $C_1^+$, $C_2^+$ and $C_3^+$ are the constants depending on
		$\alpha,n$. Here, $C_i^+$ may be different from $C_2^-$. However, we still have $
		\operatorname{\uppercase\expandafter{\romannumeral3}}^+=0 \text{ if } \operatorname{Sc}(p)=0 \text{ and } \operatorname{tr}(\textbf{a})=0$.
		
	\end{proof}

	Next, we prove Theorem~\ref{const_rigidity}. In fact, we establish a more general result (Theorem~\ref{mu_rigidity}), from which Theorem~\ref{const_rigidity} follows directly.
	\begin{thm}\label{mu_rigidity}
		Let $(M^n,g)$ be a Riemannian manifold of dimension $n\geq 3$, and let $V$ be an open bounded subset of $M^n$. 
		
		\noindent\textbf{(a) For the case $0<\alpha<1$:}
		\begin{enumerate}
			\item When $0<\alpha<1$,
			if there exists $\tau_0>0$ such that for all $\tau\le\tau_0$
			\begin{equation}\label{thm_comparison_iso_2_66}
				\mathbb{L}^-_{\alpha}(V,g, \tau)\ge -o(\tau),
			\end{equation}
			\textbf{then} the scalar curvature of $V$ satisfies
			$$
			\operatorname{Sc}(x)\le 0 \quad \text{for all } x \in V.
			$$	
			
			\item When $0<\alpha<1$,
			if there exists $\tau_0>0$ such that for all $\tau\le\tau_0$
			\begin{equation}\label{thm_comparison_iso_2_6}
				\mathbb{L}^-_{\alpha}(V,g, \tau)\ge -o(\tau^2),
			\end{equation}
			and if additionally
			\begin{equation}\label{thm_Rc_point_compare_68888}
				\int_{V} \lambda_1(\operatorname{Rc}) \, d\mu \geq 0, 
			\end{equation}
			\textbf{then} $V$ must be flat. Here, $\lambda_1(\operatorname{Rc})$ denotes the first eigenvalue of the Ricci tensor.
			
			\item When $\alpha$ satisfies the more restrictive range \eqref{6_i_3}, 
			if both \eqref{thm_comparison_iso_2_6}
			and
			\begin{equation}\label{thm_R_point_compare_6}
				\int_{V} \operatorname{Sc} \, d\mu \geq 0
			\end{equation}
			hold, \textbf{then} $V$ must be flat.
		\end{enumerate} 
		
		\noindent\textbf{(b) For the case $1<\alpha\le \frac{n}{n-2}$:}
		\begin{enumerate}
			\item When $\alpha$ satisfies the range \eqref{6_i_2},        
			if there exists  $\tau_0>0$ such that for all $\tau\le\tau_0$
			\begin{equation}\label{comparison_iso_2_66}
				\mathbb{L}^+_{\alpha}(V,g, \tau)\ge -o(\tau),
			\end{equation}
			\textbf{then} the scalar curvature satisfies 
			$$\text{$\operatorname{Sc}(x) \leq 0$ for all $x \in V$.}$$
			
			\item When $\alpha$ satisfies the more restrictive range \eqref{6_i_4666},
			if there exists $\tau_0>0$ such that for all $\tau\le\tau_0$
			\begin{equation}\label{comparison_iso_2_6}
				\mathbb{L}^+_{\alpha}(V,g, \tau)\ge -o(\tau^2),
			\end{equation}
			and if additionally
			\begin{equation}\label{thm_Rc_point_compare_6888866}
				\int_{V} \lambda_1(\operatorname{Rc}) \, d\mu \geq 0, 
			\end{equation}
			\textbf{then} $V$ must be flat.

			\item When $\alpha$ satisfies the more restrictive range \eqref{6_i_4},if both \eqref{comparison_iso_2_6} and
			\begin{equation}\label{integral_nonnegative}
				\int_{V} \operatorname{Sc} \, d\mu \geq 0
			\end{equation}
			hold, \textbf{then} $V$ must be flat.
		\end{enumerate}
	\end{thm}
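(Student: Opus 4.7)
The plan is to apply Theorem~\ref{expansion_L_a} pointwise. For each $p\in V$, choose a cutoff $\xi_{\pm}(x,t)\in\mathscr{B}_p(V)$ as in Remark~\ref{Rk_choose_of_a}, so that $\mathbf{a}=\tfrac{2(\alpha+1)}{3\chi}\operatorname{Rc}(p)$ and the normalization built into $\mathscr{B}_p(V)$ makes $u_{\pm}\xi_{\pm}$ admissible for $\mathbb{L}^{\pm}_{\alpha}$. Since $\tau_{\pm}(t)\asymp t$, the chain
\begin{equation*}
\mathcal{L}^{\pm}_{\alpha}(V,g,u_{\pm}\xi_{\pm},\tau_{\pm}(t))\;\ge\;\mathbb{L}^{\pm}_{\alpha}(V,g,\tau_{\pm}(t))\;\ge\;-o(t^k)
\end{equation*}
combined with the expansions \eqref{expansion_-}, \eqref{expansion_+_1}, \eqref{expansion_+_2} converts the assumed lower bound on $\mathbb{L}^{\pm}_{\alpha}$ into pointwise inequalities for the curvature coefficients at $p$.

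For part (1) in both (a) and (b), the $O(t)$-coefficient of the expansion is a positive constant times $\zeta_1\operatorname{Sc}(p)$, and taking $k=1$ yields $\zeta_1\operatorname{Sc}(p)\ge 0$. A direct sign check of $\zeta_1=\tfrac{8}{n(n(\alpha-1)-4)}$ shows $\zeta_1<0$ throughout $(0,1)$ and the range \eqref{6_i_2}, so $\operatorname{Sc}(p)\le 0$; since $p\in V$ is arbitrary, $\operatorname{Sc}\le 0$ on $V$.

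For parts (2) and (3), first apply (1) to obtain $\operatorname{Sc}\le 0$ on $V$. In (2), the pointwise bound $n\lambda_1(\operatorname{Rc})\le\operatorname{Sc}\le 0$ together with $\int_V\lambda_1(\operatorname{Rc})\,d\mu\ge 0$ forces $\lambda_1(\operatorname{Rc})\equiv 0$, hence $\operatorname{Rc}\ge 0$; combined with $\operatorname{Sc}\le 0$ this gives $\operatorname{Rc}\equiv 0$ (and $\operatorname{Sc}\equiv 0$) on $V$. In (3), $\int_V\operatorname{Sc}\,d\mu\ge 0$ with $\operatorname{Sc}\le 0$ only yields $\operatorname{Sc}\equiv 0$ on $V$, whence $\Delta\operatorname{Sc}\equiv 0$. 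In either subcase $\operatorname{Sc}(p)=0$ and $\operatorname{tr}(\mathbf{a})=\tfrac{2(\alpha+1)}{3\chi}\operatorname{Sc}(p)=0$, so formulas \eqref{III_-}--\eqref{III_+} give $\operatorname{III}^{\pm}=0$. The $O(t^2)$-term therefore reduces to a positive constant multiple of $\zeta_2\,\mathrm{II}(p)$, and the hypothesis $\mathbb{L}^{\pm}_{\alpha}\ge -o(\tau^2)$ combined with the sign check $\zeta_2>0$ in the relevant ranges leaves $\mathrm{II}(p)\ge 0$.

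Unpacking \eqref{see_why} with $\mathbf{a}=\tfrac{2(\alpha+1)}{3\chi}\operatorname{Rc}(p)$, the inequality $\mathrm{II}(p)\ge 0$ becomes
\begin{equation*}
\frac{4((n+5)\alpha-n-3)(\alpha-1)}{9\chi}\,|\operatorname{Rc}|^2(p)\;\ge\;\tfrac{1}{6}|\operatorname{Rm}|^2(p).
\end{equation*}
In case (2), $\operatorname{Rc}\equiv 0$ already makes the left side vanish, forcing $\operatorname{Rm}(p)=0$. In case (3), only $\operatorname{Sc}\equiv 0$ is available, so I would substitute the Weyl decomposition $|\operatorname{Rm}|^2=|W|^2+\tfrac{4}{n-2}|\operatorname{Rc}|^2$ (valid when $\operatorname{Sc}=0$) to rewrite the inequality as
\begin{equation*}
\Bigl[\tfrac{4((n+5)\alpha-n-3)(\alpha-1)}{9\chi}-\tfrac{2}{3(n-2)}\Bigr]|\operatorname{Rc}|^2(p)\;\ge\;\tfrac{1}{6}|W|^2(p)\;\ge\;0.
\end{equation*}
The main technical obstacle is the algebraic verification that the bracketed coefficient is strictly negative precisely on the ranges \eqref{6_i_3} and \eqref{6_i_4}; this reduces to a quadratic inequality in $\alpha$ whose discriminant simplifies to $4(28n^2-16n-287)$, producing exactly the boundary points appearing in \eqref{6_i_3} and \eqref{6_i_4}. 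Once the bracket is negative, the inequality forces $\operatorname{Rc}(p)=0$ and $W(p)=0$, so $\operatorname{Rm}(p)=0$, and flatness of $V$ follows.
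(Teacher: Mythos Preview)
Your proposal is correct and follows essentially the same route as the paper's proof: plug the test functions from Theorem~\ref{expansion_L_a} into the hypothesis, read off $\operatorname{Sc}\le 0$ from the $O(t)$-term via the sign of $\zeta_1$, upgrade to $\operatorname{Sc}\equiv 0$ (and in case (2), $\operatorname{Rc}\equiv 0$) using the integral hypotheses, then use the $O(t^2)$-term with $\zeta_2>0$ and the Weyl decomposition to force $\operatorname{Rm}=0$. One cosmetic difference: in part~(2) the paper takes $\mathbf{a}=0$ directly, whereas you take $\mathbf{a}=\tfrac{2(\alpha+1)}{3\chi}\operatorname{Rc}(p)$ throughout---but since you first establish $\operatorname{Rc}(p)=0$ these coincide. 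A small wording caveat: the bracket is not negative \emph{precisely} on \eqref{6_i_4}; for $n\le 6$ that range is cut off by the expansion-validity constraint $\alpha<\tfrac{n+6}{n+2}$ rather than by the quadratic, while the bracket remains negative on a strictly larger interval. What you actually need (and what the algebra gives) is that the bracket is negative \emph{throughout} \eqref{6_i_3} and \eqref{6_i_4}, which is correct.
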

	
	\begin{proof}
		\textbf{(a)}\textbf{ First, we prove \textbf{(a)}(1).}
		For any $p\in V$,  we obtain from \eqref{expansion_-} and \eqref{thm_comparison_iso_2_66} that for $t$ small enough:
		\begin{equation}\label{lemma32:111}
			\mathcal{L}^-_{\alpha}(V,g, u_-(x,t)\xi_-(x,t),\tau_-(t))=\mathfrak{m}^{1-\frac{\Gamma_{\alpha}}{2\Gamma_{\alpha}+1}}\Sigma^-_{\alpha} \left(\zeta_1 \operatorname{Sc}(p)t+o(t)\right)\ge -o(t),
		\end{equation}
		where $u_-(x,t)$ and $\xi_-(x,t)$ are these used in Theorem \ref{expansion_L_a}.
		Since $\zeta_1 < 0$ when $0 < \alpha < 1$, by comparing the $O(t)$ term in \eqref{lemma32:111} we have
		\begin{equation}\label{lemma4.1_i}
			\text{$\operatorname{Sc}(x) \leq 0$ for all $x \in V$,}
		\end{equation}
		since $p$ is arbitrarily chosen. \textbf{This proves \textbf{(a)}(1)}. \textbf{	This proves \textbf{(a)}(2)}.
		
		\textbf{ Second, we prove \textbf{(a)}(2).}   By \eqref{lemma4.1_i}, we know that $
		\text{$\lambda_1(\operatorname{Rc})(x) \leq 0$ for all $x \in V$.}$ This together with \eqref{thm_Rc_point_compare_6888866} imply $\lambda_1(\operatorname{Rc})(x) \equiv 0$ on $V$, so $\operatorname{Rc} \equiv 0$ on $V$ by \eqref{lemma4.1_i}. Now  we take
		$
		\textbf{a}=0 
		$
		in \eqref{expansion_-}. Then we obtain from \eqref{expansion_-} and \eqref{thm_comparison_iso_2_6} that for $t$ small enough:
		\begin{equation}\label{6666666666666666666666}
			\mathcal{L}^-_{\alpha}(V,g, u_-(x,t)\xi_-(x,t),\tau_-(t))=\mathfrak{m}^{1-\frac{\Gamma_{\alpha}}{2\Gamma_{\alpha}+1}}\Sigma^-_{\alpha} \left(-\frac{32\zeta_2}{6}|\operatorname{Rm}|^2(p) t^2+o(t^2)\right)\ge -o(t^2),
		\end{equation}
		where we have used, by \eqref{III_-} and $\textbf{a}=0$, 
		\begin{equation}\label{lemma32:55}
			\mathrm{III}^-=C_1'(\alpha,n)\operatorname{Sc}^2(p)=0.
		\end{equation}
		Since $\zeta_2>0$, we conclude that $\operatorname{Rm}\equiv 0$ on $V$ by \eqref{6666666666666666666666} since $p$ is arbitrarily chosen.
		
		\textbf{ Third, we prove \textbf{(a)}(3).}
		\eqref{thm_R_point_compare_6}  together with \eqref{lemma4.1_i} imply $\operatorname{Sc}\equiv 0$ on $V$. Now we take $\xi_-(x,t)\in \mathscr{B}_p(V)$, that is, we take
		\begin{equation}\label{choosing_a}
			\textbf{a}=\frac{2(\alpha+1)}{3\chi} \operatorname{Rc}(p) 
		\end{equation}
		in \eqref{expansion_-}.
		Then we obtain from \eqref{expansion_-} and \eqref{thm_comparison_iso_2_6} that  for $t$ small enough:
		\begin{equation}\label{lemma32:1}
			\mathcal{L}^-_{\alpha}(V,g, u_-(x,t)\xi_-(x,t),\tau_-(t))=\mathfrak{m}^{1-\frac{\Gamma_{\alpha}}{2\Gamma_{\alpha}+1}}\Sigma^-_{\alpha} \left(\mathrm{II} t^2+o(t^2)\right)\ge -o(t^2),
		\end{equation}
		with 
		\begin{equation}\label{lemma32:5}
			\begin{aligned}
				&\mathrm{II} =32\zeta_2\left\{ \frac{4((n+5) \alpha-n-3)(\alpha-1)}{9\chi}|\operatorname{Rc}|^2(p)-\frac{1}{6}|\operatorname{Rm}|^2(p)\right\},
			\end{aligned}
		\end{equation}
		where we have used, by \eqref{III_-} and \eqref{choosing_a}, 
		\begin{equation}\label{lemma32:55}
			\mathrm{III}^-=C_1''(\alpha,n)\operatorname{Sc}^2(p)=0.
		\end{equation}
		
		From the curvature decomposition, we get
		\begin{equation*}
			\operatorname{Rm}=W+\frac{1}{n-2} \left(\operatorname{Rc}-\frac{\operatorname{Sc}}{n}\right)\odot g+\frac{\operatorname{Sc}}{2n(n-1)} g\odot g,
		\end{equation*}
		and so
		\begin{equation}\label{orthogonal_decomposition}
			\begin{aligned}
				|\operatorname{Rm}|^2 &=\left|\operatorname { W }\right|^2+\left|\frac{1}{n-2} \stackrel{\circ}{\operatorname{Rc} } \odot g\right|^2+\left|\frac{\operatorname{Sc}}{2 n(n-1)} g \odot g\right|^2\\
				&= |W|^2+\frac{4}{n-2}|\operatorname{Rc}|^2-\frac{2}{(n-1)(n-2)} \operatorname{Sc}^2.
			\end{aligned}
		\end{equation}
		Since $\operatorname{Sc}(p)=0$, we obtain
		$$
		|\operatorname{Rm}|^2(p)\ge \frac{4}{n-2}|\operatorname{Rc}|^2(p).
		$$
		Consequently,
		\begin{equation*}
			\begin{aligned}
				&\mathrm{II} \le 32\zeta_2\left\{ \frac{4((n+5) \alpha-n-3)(\alpha-1)}{9\chi}-\frac{4}{6(n-2)}\right\}|\operatorname{Rc}|^2(p).
			\end{aligned}
		\end{equation*}
		Combining this with  \eqref{lemma32:1}, and also note that $\zeta_2>0$ when $0<\alpha<1$ and $\operatorname{Sc}(p)=0$, we obtain
		$$
		\left(\frac{4((n+5) \alpha-n-3)(\alpha-1)}{9\chi}-\frac{4}{6(n-2)}\right)|\operatorname{Rc}|^2(p)\ge 0.
		$$
		This implies $|\operatorname{Rc}|(p) = 0$ if
		\begin{equation}\label{lemma32:2}
			\frac{4((n+5)\alpha - n - 3)(\alpha - 1)}{9\chi} - \frac{4}{6(n - 2)} < 0,
		\end{equation}
		and in this case, we  see from \eqref{lemma32:1} and \eqref{lemma32:5} that $|\operatorname{Rm}|(p) = 0$.
		
		Since $\chi>0$, \eqref{lemma32:2} is equivalent to
		\begin{equation}\label{lemma32:3}
			\left(2 n^2+3 n-38\right) \alpha^2+\left(-4 n^2-2 n+50\right) \alpha+2 n^2-n-24<0.
		\end{equation}
		Inequality \eqref{lemma32:3} always holds for $n=3$. For $n\ge 4$, \eqref{lemma32:3} is equivalent to
		$$
		\alpha\in A:=	\left(\frac{2 n^2+n-25-\sqrt{28 n^2-16 n-287}}{2 n^2+3 n-38},\frac{2 n^2+n-25+\sqrt{28 n^2-16 n-287}}{2 n^2+3 n-38}\right).
		$$
		
		Also note that  $0<\alpha<1$.  We conclude that 
		when $\alpha$ satisfies 
		the range \eqref{6_i_3} (i.e. $\alpha\in (0,1)\cap A $), the conditions \eqref{thm_comparison_iso_2_6} and \eqref{thm_R_point_compare_6} imply that $V$ must be flat.
		\textbf{	This proves \textbf{(a)}(3).}
		
		\textbf{(b)} \textbf{ First, we prove \textbf{(b)}(1).}
		For any $p\in V$, 
		when $1<\alpha\le \frac{n}{n-2}$ and $\alpha<\frac{n+4}{n}$,
		it follows from \eqref{expansion_+_1} and \eqref{comparison_iso_2_66} that
		\begin{equation}\label{lemma32:111222}
			\mathcal{L}^+_{\alpha}(V,g, u_+(x,t)\xi_+(x,t),\tau_+(t))=\mathfrak{m}^{1-\frac{\Theta_{\alpha}}{1-\Theta_{\alpha}}}\Sigma^+_{\alpha}   \left(\zeta_1 \operatorname{Sc}(p)t+o(t)\right)\ge -o(t),
		\end{equation}
		where $u_+(x,t)$ and $\xi_+(x,t)$ are these used in Theorem \ref{expansion_L_a}.  
		Since $\zeta_1 < 0$ when $1<\alpha\le \frac{n}{n-2}$ and $\alpha<\frac{n+4}{n}$, by comparing the $O(t)$ term in \eqref{lemma32:111222}, we get 	\begin{equation}\label{lemma4.1_ii}
			\text{$\operatorname{Sc}(x) \leq 0$ for all $x \in V$,}
		\end{equation}
		since $p$ is arbitrarily chosen.  Notice that the inequalities
		$
		1<\alpha\le \frac{n}{n-2} \quad \text{and} \quad 1<\alpha<\frac{n+4}{n}
		$
		are equivalent to say $\alpha$ belonging to the range range \eqref{6_i_2}.
		\textbf{	This proves \textbf{(b)}(1)}.
		
		\textbf{ Second, the proof of \textbf{(b)}(2) is similar to  \textbf{(a)}(2).}	 Notice that the inequalities
		$
		1<\alpha\le \frac{n}{n-2} \quad \text{and} \quad \alpha<\frac{n+6}{n+2}
		$
		are equivalent to say $\alpha$ belonging to the range range \eqref{6_i_4666}, which guarantees expansion \eqref{expansion_+_2}.
		
		\textbf{ Third, we prove \textbf{(b)}(3).}	
		\eqref{integral_nonnegative}  together with \eqref{lemma4.1_ii} imply $\operatorname{Sc}\equiv 0$ on $V$.
		Now we take $\xi_+(x,t)\in \mathscr{B}_p(V)$, that is, we take
		\begin{equation}\label{choosing_aa}
			\textbf{a}=\frac{2(\alpha+1)}{3\chi} \operatorname{Rc}(p)  
		\end{equation}
		in \eqref{expansion_+_2}.
		Then we obtain from \eqref{expansion_+_2} and \eqref{comparison_iso_2_6} that when $1<\alpha\le \frac{n}{n-2}$ and $\alpha<\frac{n+6}{n+2}$,
		\begin{equation}
			\mathcal{L}^+_{\alpha}(V,g, u_+(x,t)\xi_+(x,t),\tau_+(t))=\mathfrak{m}^{1-\frac{\Theta_{\alpha}}{1-\Theta_{\alpha}}}\Sigma^+_{\alpha}  \left(\mathrm{II} t^2+o(t^2)\right)\ge -o(t^2),
		\end{equation}
		with $\zeta_2>0$ and
		\begin{equation}
			\begin{aligned}
				&\mathrm{II} =32\zeta_2\left\{ \frac{4((n+5) \alpha-n-3)(\alpha-1)}{9\chi}|\operatorname{Rc}|^2(p)-\frac{1}{6}|\operatorname{Rm}|^2(p)\right\},
			\end{aligned}
		\end{equation}
		where we have used, by \eqref{III_+} and \eqref{choosing_aa}, 
		\begin{equation}
			\mathrm{III}^+=C_2'(\alpha,n)\operatorname{Sc}^2(p)=0.
		\end{equation}
		
		When $1<\alpha\le \frac{n}{n-2}$ and $\alpha<\frac{n+6}{n+2}$, following similar arguments as in part (a) (3), 
		we can conclude that if \eqref{lemma32:3} holds, $|\operatorname{Rm}|(p)=0$. Note that
		the range \eqref{6_i_4} is equivalent to $\alpha$ satisfying \eqref{lemma32:3}, $1<\alpha\le \frac{n}{n-2}$ and $\alpha<\frac{n+6}{n+2}$. \textbf{This proves \textbf{(b)}(3).}
		
	\end{proof}
	
	Now we give the proof of Theorem \ref{const_rigidity}.
	\begin{proof}[Proof of Theorem \ref{const_rigidity}:]
		Theorem  \ref{const_rigidity} follows directly from Theorem \ref{mu_rigidity} and Theorem \ref{connection}.
	\end{proof}

	\section{ Power series expansion formulas of $\mathcal{W}^{\pm}_{\alpha}$-functionals and Proof of Theorem \ref{Yamabe_rigidity}}

	In this section, we derive the power series expansion formulas for the $\mathcal{W}^{\pm}_{\alpha}$-functionals and subsequently provide the proof of Theorem~\ref{Yamabe_rigidity}.
	\begin{lem}\label{expansion_W_a}
		We use the same notation as in Theorem \ref{expansion_L_a},
		and choose $\xi_{\pm} \in \mathscr{B}_p(V)$. 
		\\
		\noindent (i)	 
		For $0<\alpha<1$, we have 
		\begin{equation}\label{expansion_W-}
			\begin{aligned}
				&\quad\mathcal{W}^-_{\alpha}(V,g, u_-(x,t)\xi_-(x,t),\tau_-(t))\\
				&=\mathfrak{m}^{1-\frac{\Gamma_{\alpha}}{2\Gamma_{\alpha}+1}}\Sigma^-_{\alpha} \left\{32\zeta_2\left( \frac{4((n+5) \alpha-n-3)(\alpha-1)}{9\chi}|\operatorname{Rc}|^2(p)-\frac{1}{6}|\operatorname{Rm}|^2(p)\right)t^2+j^-(\alpha,n)\operatorname{Sc}^2(p)t^2+o(t^2)\right\},
			\end{aligned}
		\end{equation}
		where $j^-(\alpha,n)$ is a constant depending on $\alpha$ and $n$ that satisfies
		$$j^-(\alpha,n)\to 0 \text{ as } \alpha\to 1.$$

		\noindent (ii)	
		For $1<\alpha\le \frac{n}{n-2}$ and $\alpha<\frac{n+6}{n+2}$,
		we have 
		\begin{equation}\label{expansionW_+_2}
			\begin{aligned}
				&\quad\mathcal{W}^+_{\alpha}(V,g, u_+(x,t)\xi_+(x,t),\tau_+(t))\\
				&=\mathfrak{m}^{1-\frac{\Theta_{\alpha}}{1-\Theta_{\alpha}}}\Sigma^+_{\alpha} \left\{32\zeta_2\left( \frac{4((n+5) \alpha-n-3)(\alpha-1)}{9\chi}|\operatorname{Rc}|^2(p)-\frac{1}{6}|\operatorname{Rm}|^2(p)\right)t^2+j^+(\alpha,n)\operatorname{Sc}^2(p)t^2+o(t^2)\right\},
			\end{aligned}
		\end{equation}
		where $j^+(\alpha,n)$ is a constant depending on $\alpha$ and $n$ that satisfies
		$$j^+(\alpha,n)\to 0 \text{ as } \alpha\to 1.$$
		
	\end{lem}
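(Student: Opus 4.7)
The plan is to add the extra scalar curvature term $\frac{\tau^{\bullet}_\pm}{2(\alpha+1)}\int_V \operatorname{Sc}\cdot u^2\,d\mu$ to the $\mathcal{L}^{\pm}_{\alpha}$-expansion already obtained in Theorem \ref{expansion_L_a}, and observe two decisive cancellations. First, with $\xi_\pm\in\mathscr{B}_p(V)$ (so $\textbf{a}=\tfrac{2(\alpha+1)}{3\chi}\operatorname{Rc}(p)$), the square term $\chi|\textbf{a}-\tfrac{2(\alpha+1)}{3\chi}\operatorname{Rc}(p)|^2$ inside $\operatorname{\uppercase\expandafter{\romannumeral2}}$ vanishes, producing exactly the $|\operatorname{Rc}|^2(p)$ and $|\operatorname{Rm}|^2(p)$ contributions claimed. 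Moreover $\operatorname{tr}(\textbf{a})=\tfrac{2(\alpha+1)}{3\chi}\operatorname{Sc}(p)$, and the normalization constraint $D_1(\alpha+1)=0$ (resp. $D_1(2\alpha)=0$) forces $\beta_1$ to be a specific linear combination of $\operatorname{Sc}(p)$ and $\operatorname{tr}(\textbf{a})$; consequently $\operatorname{\uppercase\expandafter{\romannumeral3}}^{\pm}=C_1^{\pm}\operatorname{Sc}^2(p)+C_2^{\pm}\operatorname{tr}(\textbf{a})\operatorname{Sc}(p)+C_3^{\pm}(\operatorname{tr}(\textbf{a}))^2$ collapses to $c^{\pm}(\alpha,n)\operatorname{Sc}^2(p)$ for an explicit rational function $c^{\pm}(\alpha,n)$.

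Next, I would apply Lemma \ref{key_lemma}(iii) to expand $\frac{\tau^{\bullet}_\pm}{2(\alpha+1)}\int_V\operatorname{Sc}\cdot(u_\pm\xi_\pm)^2\,d\mu$. Tracking the powers of $t$ through the normalization of $u_\pm$ and the definition of $\tau_\pm(t)$, a direct computation (entirely analogous to the one carried out for $\mathcal{L}^{\pm}_\alpha$ in the proof of Theorem \ref{expansion_L_a}) yields an expansion of the form $M_0\operatorname{Sc}(p)\,t+M_1\,t^2+o(t^2)$, where $M_1$ is a linear combination of $\Delta\operatorname{Sc}(p)$ and $\operatorname{Sc}^2(p)$ (after substituting $\operatorname{tr}(\textbf{a})\propto\operatorname{Sc}(p)$ and $\beta_1\propto\operatorname{Sc}(p)$). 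The heart of the proof is then to verify the two algebraic identities: (i) $M_0+\mathfrak{m}^{1-\Gamma_{\alpha}/(2\Gamma_{\alpha}+1)}\Sigma^{-}_{\alpha}\zeta_1=0$ (and the analogous $+$-version), which cancels the $O(t)\operatorname{Sc}(p)$ term and forces the leading order of $\mathcal{W}^{\pm}_\alpha$ to be $O(t^2)$; (ii) the ensuing cancellation of the $\Delta\operatorname{Sc}(p)t^2$ coefficients, which once (i) is known reduces to the Beta-function identity
\[
\zeta_1\cdot\frac{1}{2n}\left(\frac{|\alpha-1|}{8}\right)^{-1}\cdot\frac{\mathscr{B}(n/2+1,2/(1-\alpha)+1)}{\mathscr{B}(n/2,2/(1-\alpha)+1)}=-32\zeta_2,
\]
checkable by direct substitution using $\mathscr{B}(p+1,q)/\mathscr{B}(p,q)=p/(p+q)$ (for $q>0$) together with its counterpart \eqref{thm:beta_ii_2} (for $q<-p$), and the explicit definitions $\zeta_1=\tfrac{8}{n(n(\alpha-1)-4)}$, $\zeta_2=\tfrac{\zeta_1}{8(n(\alpha-1)+2\alpha-6)}$.

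After these cancellations, the surviving $O(t^2)$ contributions assemble into the stated $|\operatorname{Rm}|^2(p)$, $|\operatorname{Rc}|^2(p)$ terms with coefficient $32\zeta_2$, plus an $\operatorname{Sc}^2(p)$-coefficient $j^{\pm}(\alpha,n)$ built from $c^{\pm}(\alpha,n)$ together with the $-\tfrac{1}{6}\operatorname{Sc}^2(p)$, $\operatorname{tr}(\textbf{a})\operatorname{Sc}(p)$, and $\beta_1\operatorname{Sc}(p)$ contributions in Lemma \ref{key_lemma}(iii). For the limiting behaviour $j^{\pm}(\alpha,n)\to 0$ as $\alpha\to 1$, I would invoke the remark following Definition \ref{Def_sec2}: choosing $\mathfrak{m}=\mathfrak{m}(\alpha)$ with $\tfrac{|\alpha-1|}{2}\mathfrak{m}(\alpha)\to 1$, the functional $\mathcal{W}^{\pm}_{\alpha}$ converges to Perelman's $\mathcal{W}$-functional, whose expansion \eqref{expansion_nu_opt} contains only a $|\operatorname{Rm}|^2(p)t^2$ term and no $\operatorname{Sc}^2$ contribution; the rational function $j^{\pm}(\alpha,n)$ must therefore vanish in the limit. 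The main obstacle is identity (i): tracing $\Sigma^{\pm}_\alpha$, $\mathcal{N}_{\alpha,n}$ (resp.\ $\mathcal{G}_{\alpha,n}$) and the $\tau_\pm$-normalization through many Beta-function ratios is lengthy, but it is ultimately forced by consistency because Theorem \ref{connection} together with $\mathbb{Y}^{\pm}_\alpha(\mathbb{R}^n,g_{\mathbb{R}^n})=\mathbb{G}^{\pm}_\alpha(\mathbb{R}^n,g_{\mathbb{R}^n})$ already pins down the vanishing of the $O(t)\operatorname{Sc}(p)$ coefficient on the flat Euclidean model.
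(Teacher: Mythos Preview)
Your overall strategy matches the paper's: add the scalar–curvature integral (computed via Lemma \ref{key_lemma}(iii)) to the $\mathcal{L}^{\pm}_{\alpha}$-expansion of Theorem \ref{expansion_L_a}, and check that the $\operatorname{Sc}(p)\,t$ and $\Delta\operatorname{Sc}(p)\,t^2$ contributions cancel. Your Beta-function identity for the $\Delta\operatorname{Sc}$ cancellation is correct and is essentially what the paper uses. Two points, however, need repair.

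First, your ``consistency'' justification for identity (i) does not work. On $(\mathbb{R}^n,g_{\mathbb{R}^n})$ one has $\operatorname{Sc}\equiv 0$, so the $O(t)\operatorname{Sc}(p)$ term vanishes trivially regardless of its coefficient; nothing about $\mathbb{Y}^{\pm}_\alpha(\mathbb{R}^n)=\mathbb{G}^{\pm}_\alpha(\mathbb{R}^n)$ forces $M_0=-\mathfrak{m}^{1-\Gamma_\alpha/(2\Gamma_\alpha+1)}\Sigma^-_\alpha\,\zeta_1$. The paper establishes this cancellation by a short direct computation: after factoring out the common prefactor $\mathfrak{m}^{1-\Gamma_\alpha/(2\Gamma_\alpha+1)}\Sigma^-_\alpha$ (already identified in \eqref{expan_c_i1}), one is left with the ratio $\dfrac{\tfrac{\omega_{n-1}}{2}\bigl(\tfrac{|\alpha-1|}{8}\bigr)^{-n/2}\mathscr{B}(n/2,\tfrac{2}{1-\alpha}+1)}{2(1+\alpha)A_0}$, which a single application of \eqref{thm:beta_eq_1} (together with $\mathscr{B}(n/2,\tfrac{2}{1-\alpha}+1)=\mathscr{B}(n/2,\tfrac{2\alpha}{1-\alpha}+3)$) simplifies to exactly $-\zeta_1$. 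This is much shorter than you suggest.

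Second, your argument that $j^{\pm}(\alpha,n)\to 0$ via convergence of $\mathcal{W}^{\pm}_\alpha$ to Perelman's $\mathcal{W}$ is only heuristic: it presupposes that the expansion coefficients pass to the limit, i.e.\ that the $o(t^2)$ remainders are uniform in $\alpha$ near $1$ and that the test functions $u_\pm\xi_\pm$ converge to the test function used in \eqref{expansion_nu_opt}, neither of which you verify. The paper avoids this by computing $c_6^{\pm},c_7^{\pm},c_8^{\pm}$ explicitly (via the same Beta-ratio identities \eqref{thm:beta_ii_1}--\eqref{thm:beta_eq_1}), solving the normalization constraint \eqref{normal_1_condition} (resp.\ \eqref{normal_2_condition}) for $\beta_1$ as a multiple of $\operatorname{Sc}(p)$, and then taking the limit $\alpha\to 1$ directly in the resulting rational expression for $j^{\pm}$ using $\zeta_1\to-\tfrac{2}{n}$, $\zeta_2\to\tfrac{1}{16n}$, $\chi\to 4$, $\beta_1\to-\tfrac{1}{3}\operatorname{Sc}(p)$, $\textbf{a}\to\tfrac{1}{3}\operatorname{Rc}(p)$. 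This is the rigorous route, and you should follow it rather than appeal to functional convergence.
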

	
	\begin{proof}
		(i)   By Lemma \ref{key_lemma} (iii), we get for $0<\alpha<1$:
		\begin{equation}\label{sc_expansion_-}
			\begin{aligned}
				&\quad 	 
				\frac{1}{2(1+\alpha)}\tau^{\Gamma_{\alpha}+1} \int_V \operatorname{Sc}(x) \cdot \left( u_-(x,t)\xi_-(x,t)\right)^2 d\mu
				\\
				&=\frac{1}{2(1+\alpha)}\left(\frac{\mathfrak{m}\Gamma_{\alpha}}{1+\Gamma_{\alpha}}\left(\frac{A_0}{D_0(\alpha+1)^{\frac{2}{\alpha+1}}}\right)^{-1}\frac{D_0(2\alpha)}{D_0(\alpha+1)^{\frac{2\alpha}{\alpha+1}}}\right)^{\frac{1+\Gamma_{\alpha}}{1+2\Gamma_{\alpha}}}\frac{ A_0}{D_0(\alpha+1)^{\frac{2}{\alpha+1}}}	\left(A_0^{-1} t^{1-\frac{n}{2}}   \int_V \operatorname{Sc}(x)\cdot  \left( \operatorname{H}(\frac{d(p,x)}{\sqrt{t}})\xi_-(x,t)\right)^2 d\mu \right)\\
				&=\mathfrak{m}^{1-\frac{\Gamma_{\alpha}}{2\Gamma_{\alpha}+1}}\Sigma^-_{\alpha} \frac{\frac{\omega_{n-1}}{2}\left(\frac{|\alpha-1|}{8}\right)^{-\frac{n}{2}}\mathscr{B}(\frac{n}{2},\frac{2}{1-\alpha}+1)}{2(1+\alpha)A_0}\times\\
				&\Big\{\operatorname{Sc}(p)t-\frac{8}{n(\alpha-1)+2\alpha-6}\left(\frac{1}{2}\Delta \operatorname{Sc}(p)-\frac{1}{6}\operatorname{Sc}^2(p)+\operatorname{tr}(\textbf{a})\operatorname{Sc}(p)\right)t^2+\beta_1 \operatorname{Sc}(p)t^2+o(t^2)\Big\}\\
				&=-\mathfrak{m}^{1-\frac{\Gamma_{\alpha}}{2\Gamma_{\alpha}+1}}\Sigma^-_{\alpha} \zeta_1\Big\{\operatorname{Sc}(p)t-\frac{8}{n(\alpha-1)+2\alpha-6}\left(\frac{1}{2}\Delta \operatorname{Sc}(p)-\frac{1}{6}\operatorname{Sc}^2(p)+\operatorname{tr}(\textbf{a})\operatorname{Sc}(p)\right)t^2+\beta_1 \operatorname{Sc}(p)t^2+o(t^2)\Big\}\\
				&=\mathfrak{m}^{1-\frac{\Gamma_{\alpha}}{2\Gamma_{\alpha}+1}}\Sigma^-_{\alpha} \Big\{-\zeta_1\operatorname{Sc}(p)t+32\zeta_2\Delta \operatorname{Sc}(p)t^2+\left(-\frac{32\zeta_2}{3}\operatorname{Sc}^2(p)+64\zeta_2\operatorname{tr}(\textbf{a})\operatorname{Sc}(p)-\zeta_1\beta_1  \operatorname{Sc}(p)\right)t^2+o(t^2)\Big\},
			\end{aligned}
		\end{equation}
		where we have used $\left(\frac{\Gamma_{\alpha}}{1+\Gamma_{\alpha}}\left(\frac{A_0}{D_0(\alpha+1)^{\frac{2}{\alpha+1}}}\right)^{-1}\frac{D_0(2\alpha)}{D_0(\alpha+1)^{\frac{2\alpha}{\alpha+1}}}\right)^{\frac{1+\Gamma_{\alpha}}{1+2\Gamma_{\alpha}}}	\frac{ A_0}{D_0(\alpha+1)^{\frac{2}{\alpha+1}}}=\Sigma^-_{\alpha} $ and, by \eqref{thm:beta_eq_1},
		$$
		\frac{\frac{\omega_{n-1}}{2}\left(\frac{|\alpha-1|}{8}\right)^{-\frac{n}{2}}\mathscr{B}(\frac{n}{2},\frac{2}{1-\alpha}+1)}{2(1+\alpha)A_0}=\frac{\frac{\omega_{n-1}}{2}\left(\frac{|\alpha-1|}{8}\right)^{-\frac{n}{2}}\mathrm{B}(\frac{n}{2},\frac{2\alpha}{1-\alpha}+3)}{2(1+\alpha)\frac{\omega_{n-1}}{32}\left(\frac{|\alpha-1|}{8}\right)^{-\frac{n+2}{2}}\mathrm{B}(\frac{n}{2}+1,\frac{2\alpha}{1-\alpha}+1)}=-\zeta_1.
		$$
		
		Since 
		\begin{equation}\label{choose_a}
			\textbf{a}=\frac{2(\alpha+1)}{3\chi} \operatorname{Rc}(p),
		\end{equation}
		we have that, by \eqref{expansion_-},
		\begin{equation}
			\begin{aligned}
				&\quad\mathcal{W}^-_{\alpha}(V,g, u_-(x,t)\xi_-(x,t),\tau_-(t))\\
				&=\mathcal{L}^-_{\alpha}(V,g, u_-(x,t)\xi_-(x,t),\tau_-(t))+\frac{1}{2(1+\alpha)}\tau^{\Gamma_{\alpha}+1} \int_V \operatorname{Sc}(x) \cdot \left( u_-(x,t)\xi_-(x,t)\right)^2 d\mu\\
				&=\mathfrak{m}^{1-\frac{\Gamma_{\alpha}}{2\Gamma_{\alpha}+1}}\Sigma^-_{\alpha} \left\{32\zeta_2\left( \frac{4((n+5) \alpha-n-3)(\alpha-1)}{9\chi}|\operatorname{Rc}|^2(p)-\frac{1}{6}|\operatorname{Rm}|^2(p)\right)t^2+\operatorname{\uppercase\expandafter{\romannumeral3}}^- t^2\right.\\
				&\left.\quad+\left(-\frac{32\zeta_2}{3}\operatorname{Sc}^2(p)+64\zeta_2\operatorname{tr}(\textbf{a})\operatorname{Sc}(p)-\zeta_1\beta_1  \operatorname{Sc}(p)\right)t^2+o(t^2)\right\}.
			\end{aligned}
		\end{equation}
		Here, by \eqref{III_-}, we have
		\begin{equation}
			\begin{aligned}
				\operatorname{\uppercase\expandafter{\romannumeral3}}^-=&\frac{c_3^-}{72}\operatorname{Sc}^2(p)+c_4^-(\operatorname{tr}(\textbf{a}))^2 +c_5^-\operatorname{tr}(\textbf{a})\operatorname{Sc}(p)+c_6^-\beta_1\operatorname{tr}(\textbf{a})+c_7^-\beta_1^2+c_8^-\beta_1 \operatorname{Sc}(p).
			\end{aligned}
		\end{equation}
		
		\textbf{We need to calculate $c_6^{-}$, $c_7^{-}$, $c_8^{-}$ in \eqref{III_-}.}
		By Lemma \ref{key_lemma}, \eqref{ot2_term}, \eqref{thm:beta_ii_1} and \eqref{thm:beta_eq_1}, we have
		$$
		\begin{aligned}
			c_6^-=& \frac{1}{n}	\left(\frac{|\alpha-1|}{8}\right)^{-1}\left(\frac{1-\gamma}{\gamma\alpha}\frac{2\alpha(\alpha-2)}{4}\frac{\mathrm{B}(\frac{n}{2}+1,\frac{2\alpha}{1-\alpha}+1)}{\mathrm{B}(\frac{n}{2},\frac{2\alpha}{1-\alpha}+1)}-\frac{2}{\gamma(1+\alpha)}\frac{(\alpha+1)(\alpha-1)}{4}\frac{\mathrm{B}(\frac{n}{2}+1,\frac{1+\alpha}{1-\alpha}+1)}{\mathrm{B}(\frac{n}{2},\frac{1+\alpha}{1-\alpha}+1)}\right)\\
			=&\frac{-4(\alpha-1)^2 n-8 \alpha^2+24 \alpha}{n(n(\alpha-1)-4)}=\frac{-(\alpha-1)^2 n-2 \alpha^2+6 \alpha}{2}\zeta_1,\\
			c_7^-=& \left(\frac{1-\gamma}{\gamma\alpha}\frac{2\alpha(2\alpha-2)}{8}-\frac{2}{\gamma(1+\alpha)}\frac{(\alpha+1)(\alpha-1)}{8}\right)\\
			=&\frac{(n-2) \alpha^2-2( n+1) \alpha+n}{4 n},
		\end{aligned}
		$$
		and
		$$
		\begin{aligned}
			c_8^-=& \frac{1}{6n}\left(\frac{|\alpha-1|}{8}\right)^{-1}\left\{-	\frac{\mathrm{~B}(\frac{n}{2}+2,\frac{2\alpha}{1-\alpha}+1)}{\mathrm{~B}(\frac{n}{2}+1,\frac{2\alpha}{1-\alpha}+1)}-\frac{1-\gamma}{\gamma}\frac{\mathrm{~B}(\frac{n}{2}+1,\frac{2\alpha}{1-\alpha}+1)}{\mathrm{~B}(\frac{n}{2},\frac{2\alpha}{1-\alpha}+1)}+\frac{1}{\gamma}   \frac{\mathrm{~B}(\frac{n}{2}+1,\frac{1+\alpha}{1-\alpha}+1)}{\mathrm{~B}(\frac{n}{2},\frac{1+\alpha}{1-\alpha}+1)}\right\}\\
			=&\frac{1}{6n}\left(\frac{|\alpha-1|}{8}\right)^{-1}\left(-\frac{\frac{n}{2}+1}{\frac{2\alpha}{1-\alpha}+\frac{n}{2}+2}-\frac{1-\gamma}{\gamma}\frac{\frac{n}{2}}{\frac{2\alpha}{1-\alpha}+\frac{n}{2}+1}+\frac{2}{\gamma}\frac{\frac{n}{2}}{\frac{1+\alpha}{1-\alpha}+\frac{n}{2}+1}\right)\\
			=&\frac{\alpha+1}{3}\zeta_1.
		\end{aligned}
		$$	
		The final simplification steps are computed using the mathematical software Maple.
		
		Hence,
		\begin{equation}
			\begin{aligned}
				&\operatorname{\uppercase\expandafter{\romannumeral3}}^--\frac{32\zeta_2}{3}\operatorname{Sc}^2(p)+64\zeta_2\operatorname{tr}(\textbf{a})\operatorname{Sc}(p)-\zeta_1\beta_1  \operatorname{Sc}(p)\\
				=&	-\frac{16}{9}\zeta_2\operatorname{Sc}^2(p)+\left(96\alpha^2-160\alpha+16n(\alpha-1)^2\right)\zeta_2(\operatorname{tr}(\textbf{a}))^2 +64\left(1-\frac{(\alpha+1)}{3 }\right)\zeta_2\operatorname{tr}(\textbf{a})\operatorname{Sc}(p)\\
				&+\frac{-(\alpha-1)^2 n-2 \alpha^2+6 \alpha}{2}\zeta_1\beta_1\operatorname{tr}(\textbf{a})+\frac{(n-2) \alpha^2-2( n+1) \alpha+n}{4 n}\beta_1^2+\frac{\alpha-2}{3}\zeta_1\beta_1 \operatorname{Sc}(p).
			\end{aligned}
		\end{equation}
		Since $\int_V (u_-\xi_-)^{\alpha+1}d\mu \equiv 1$, $\operatorname{tr}(\textbf{a})$, $\beta_1$ and $\operatorname{Sc}(p)$ should satisfy \eqref{normal_1_condition}.
		This together with \eqref{choose_a} implies that
		$$
		\begin{aligned}
			\beta_1=-\frac{8 \left((n+4) \alpha^2-2(n+5) \alpha+n+2\right)}{3 (\alpha+1)((n-2) \alpha-n-2)\chi} \operatorname{Sc}(p).
		\end{aligned}
		$$
		When $\alpha\to 1$, we have
		$\zeta_1\to -\frac{2}{n}$, $\zeta_2\to \frac{1}{16n}$, $\beta_1\to -\frac{1}{3}\operatorname{Sc}(p)$,  $\chi\to 4$ and $\textbf{a} \to \frac{1}{3}\operatorname{Rc}(p)$. It follows that  
		$$
		\operatorname{\uppercase\expandafter{\romannumeral3}}^--\frac{32\zeta_2}{3}\operatorname{Sc}^2(p)+64\zeta_2\operatorname{tr}(\textbf{a})\operatorname{Sc}(p)-\zeta_1\beta_1  \operatorname{Sc}(p)=j^-(\alpha,n)\operatorname{Sc}^2(p).
		$$
		with
		$$j^-(\alpha,n)\to 0.$$
		
		(ii) Similar to \eqref{sc_expansion_-}, by Lemma \ref{key_lemma} (iii), we have for $1<\alpha\le \frac{n}{n-2}$ and $\alpha<\frac{n+6}{n+2}$:
		\begin{equation}\label{sc_expansion_+}
			\begin{aligned}
				&\quad 	 
				\frac{1}{2(1+\alpha)}\tau^{1-2\Theta_{\alpha}} \int_V \operatorname{Sc}(x) \cdot \left( u_+(x,t)\xi_+(x,t)\right)^2 d\mu
				\\
				&=\frac{1}{2(1+\alpha)}\left(\frac{\mathfrak{m}\Theta_{\alpha}}{1-2\Theta_{\alpha}}\left(\frac{A_0}{D_0(2\alpha)^{\frac{1}{\alpha}}}\right)^{-1}\frac{D_0(\alpha+1)}{ D_0(2\alpha)^{\frac{\alpha+1}{2\alpha}}}\right)^{\frac{1-2\Theta_{\alpha}}{1-\Theta_{\alpha}}}	
				\frac{A_0}{ D_0(2\alpha)^{\frac{1}{\alpha}}}
				\left(A_0^{-1} t^{1-\frac{n}{2}}   \int_V \operatorname{Sc}(x)\cdot  \left( \operatorname{H}(\frac{d(p,x)}{\sqrt{t}})\xi_+(x,t)\right)^2 d\mu \right)\\
				&=\mathfrak{m}^{1-\frac{\Theta_{\alpha}}{1-\Theta_{\alpha}}}\Sigma^+_{\alpha} \frac{\frac{\omega_{n-1}}{2}\left(\frac{|\alpha-1|}{8}\right)^{-\frac{n}{2}}\mathscr{B}(\frac{n}{2},\frac{2}{1-\alpha}+1)}{2(1+\alpha)A_0}\times\\
				&\Big\{\operatorname{Sc}(p)t-\frac{8}{n(\alpha-1)+2\alpha-6}\left(\frac{1}{2}\Delta \operatorname{Sc}(p)-\frac{1}{6}\operatorname{Sc}^2(p)+\operatorname{tr}(\textbf{a})\operatorname{Sc}(p)\right)t^2+\beta_1 \operatorname{Sc}(p)t^2+o(t^2)\Big\}\\
				&=-\mathfrak{m}^{1-\frac{\Theta_{\alpha}}{1-\Theta_{\alpha}}}\Sigma^+_{\alpha}\zeta_1\Big\{\operatorname{Sc}(p)t-\frac{8}{n(\alpha-1)+2\alpha-6}\left(\frac{1}{2}\Delta \operatorname{Sc}(p)-\frac{1}{6}\operatorname{Sc}^2(p)+\operatorname{tr}(\textbf{a})\operatorname{Sc}(p)\right)t^2+\beta_1 \operatorname{Sc}(p)t^2+o(t^2)\Big\}\\
				&=\mathfrak{m}^{1-\frac{\Theta_{\alpha}}{1-\Theta_{\alpha}}}\Sigma^+_{\alpha} \Big\{-\zeta_1\operatorname{Sc}(p)t+32\zeta_2\Delta \operatorname{Sc}(p)t^2+\left(-\frac{32\zeta_2}{3}\operatorname{Sc}^2(p)+64\zeta_2\operatorname{tr}(\textbf{a})\operatorname{Sc}(p)-\zeta_1\beta_1  \operatorname{Sc}(p)\right)t^2+o(t^2)\Big\},
			\end{aligned}
		\end{equation}
		where we have used $\left(\frac{\Theta_{\alpha}}{1-2\Theta_{\alpha}}\left(\frac{A_0}{D_0(2\alpha)^{\frac{1}{\alpha}}}\right)^{-1}\frac{D_0(\alpha+1)}{ D_0(2\alpha)^{\frac{\alpha+1}{2\alpha}}}\right)^{\frac{1-2\Theta_{\alpha}}{1-\Theta_{\alpha}}}	 \frac{A_0}{ D_0(2\alpha)^{\frac{1}{\alpha}}}= \Sigma^+_{\alpha}$.
		Similar to \eqref{c_2_+}, based on \eqref{thm:beta_ii_1} and \eqref{thm:beta_ii_2}, we still have for $1<\alpha\le \frac{n}{n-2}$:
		$$
		\frac{\frac{\omega_{n-1}}{2}\left(\frac{|\alpha-1|}{8}\right)^{-\frac{n}{2}}\mathscr{B}(\frac{n}{2},\frac{2}{1-\alpha}+1)}{2(1+\alpha)A_0}=\frac{\frac{\omega_{n-1}}{2}\left(\frac{|\alpha-1|}{8}\right)^{-\frac{n}{2}}\mathscr{B}(\frac{n}{2},\frac{2\alpha}{1-\alpha}+3)}{2(1+\alpha)\frac{\omega_{n-1}}{32}\left(\frac{|\alpha-1|}{8}\right)^{-\frac{n+2}{2}}\mathscr{B}(\frac{n}{2}+1,\frac{2\alpha}{1-\alpha}+1)}=-\zeta_1.
		$$
		Similarly, we can show that $c_i^{+}(\alpha,n) = c_i^{-}(\alpha,n)$ for $i=6,7,8$ (ignoring the domain of variable $\alpha$) based on \eqref{thm:beta_ii_1},  \eqref{thm:beta_ii_2}, \eqref{thm:beta_ii_1} and \eqref{thm:beta_ii_2}. 
		
		Since $\int_V (u_+\xi_+)^{\alpha+1}d\mu \equiv 1$, $\operatorname{tr}(\textbf{a})$, $\beta_1$ and $\operatorname{Sc}(p)$ should satisfy \eqref{normal_2_condition}.
		This together with \eqref{choose_a} implies that
		$$
		\begin{aligned}
			\beta_1=-\frac{4 \left((n+4) \alpha^2-2(n+5) \alpha+n+2\right)}{3 \alpha((n-2) \alpha-n-2)\chi} \operatorname{Sc}(p).
		\end{aligned}
		$$
		We still have $\beta_1\to -\frac{1}{3}\operatorname{Sc}(p)$ as $\alpha\to 1$. Hence, we can conclude that Lemma \ref{expansion_W_a} (ii) holds as Lemma \ref{expansion_W_a} (i).
	\end{proof}	
	
	Next, we prove Theorem~\ref{Yamabe_rigidity}. In fact, we establish a more general result (Theorem~\ref{mu_constant_rigidity}), from which Theorem~\ref{Yamabe_rigidity} follows directly.
	
	\begin{thm}\label{mu_constant_rigidity}
		Let $(M^n,g)$ be a Riemannian manifold of dimension $n\geq 3$, and let $V$ be an open bounded subset of $M^n$.    
		Suppose that there exist constants $\kappa^{\pm}$, depending only on $n$, such that for $0<|\alpha-1|<\kappa^{\pm}$, the following holds: 
		there exists $\tau_0$ such that $\tau\leq \tau_0$ satisfies
		\begin{equation}\label{mu_constant_comparison_iso_2_6}
			\mu^{\pm}_{\alpha}(V,g, \tau) \geq -o(\tau^2).
		\end{equation}
		Then $V$ must be flat.
	\end{thm}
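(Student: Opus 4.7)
The plan is to apply Lemma \ref{expansion_W_a} with a test function adapted to each point. Fix an arbitrary $p \in V$ and take $\xi_\pm \in \mathscr{B}_p(V)$; the corresponding $u_\pm \xi_\pm$ is admissible in the infimum defining $\mu^\pm_\alpha(V,g,\tau_\pm(t))$ by the normalization built into $\mathscr{B}_p(V)$. Since $\tau_\pm(t)$ is a fixed positive constant multiple of $t$, the hypothesis \eqref{mu_constant_comparison_iso_2_6} yields $\mathcal{W}^\pm_\alpha(V,g,u_\pm\xi_\pm,\tau_\pm(t)) \geq -o(t^2)$. Inserting the expansion from Lemma \ref{expansion_W_a}, dividing by $t^2$, and letting $t \to 0^+$ produces the pointwise inequality
\[
32\zeta_2\left(\frac{4((n+5)\alpha-n-3)(\alpha-1)}{9\chi}|\operatorname{Rc}|^2(p) - \frac{1}{6}|\operatorname{Rm}|^2(p)\right) + j^\pm(\alpha,n)\operatorname{Sc}^2(p) \geq 0.
\]

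Next I would invoke the curvature decomposition $|\operatorname{Rm}|^2 = |W|^2 + \frac{4}{n-2}|\operatorname{Rc}|^2 - \frac{2}{(n-1)(n-2)}\operatorname{Sc}^2$ from \eqref{orthogonal_decomposition} and set $A(\alpha,n) := \frac{4((n+5)\alpha-n-3)(\alpha-1)}{9\chi}$. As $\alpha \to 1$ we have $A \to 0$, $\chi \to 4$, $\zeta_2 \to \frac{1}{16n}$, and $j^\pm(\alpha,n) \to 0$, so for $\alpha$ in a two-sided neighborhood of $1$ the coefficient $A - \frac{2}{3(n-2)}$ is strictly negative. Using the elementary bound $|\operatorname{Rc}|^2 \geq \operatorname{Sc}^2/n$, the $|\operatorname{Rc}|^2$ term is absorbed into the $\operatorname{Sc}^2$ coefficient, reducing the inequality to
\[
\left[\frac{32\zeta_2}{n}\left(A - \tfrac{2}{3(n-2)}\right) + \frac{32\zeta_2}{3(n-1)(n-2)} + j^\pm(\alpha,n)\right]\operatorname{Sc}^2(p) - \frac{32\zeta_2}{6}|W|^2(p) \geq 0.
\]
A direct substitution at $\alpha = 1$ shows the bracketed coefficient equals $-\frac{2}{3n^2(n-1)} < 0$.

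By continuity in $\alpha$, there exists $\kappa^\pm > 0$ depending only on $n$ such that this bracket remains strictly negative for $0 < |\alpha-1| < \kappa^\pm$; explicitly, $\kappa^\pm$ is the distance from $\alpha = 1$ to the nearest real root of the polynomial obtained by clearing the denominators in $\zeta_2$, $\chi$, and $j^\pm$, which accounts for the seventh-degree polynomial mentioned in Remark~\ref{exact_number_equation}. In this range both bracketed coefficients are negative, so the inequality forces $\operatorname{Sc}(p) = 0$ and $|W|(p) = 0$; substituting back leaves $32\zeta_2\bigl(A - \frac{2}{3(n-2)}\bigr)|\operatorname{Rc}|^2(p) \geq 0$, and the strict negativity of the coefficient forces $|\operatorname{Rc}|(p) = 0$. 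Hence $\operatorname{Rm}(p) = 0$, and since $p \in V$ was arbitrary, $V$ is flat. The principal technical obstacle is the sign analysis of the bracketed coefficient on a full neighborhood of $\alpha = 1$; this relies crucially on the identity $j^\pm(\alpha,n) \to 0$ as $\alpha \to 1$ established in Lemma \ref{expansion_W_a}, which itself rests on the delicate cancellation between the expansion coefficients of $\mathcal{L}^\pm_\alpha$ and the scalar-curvature correction $\frac{1}{2(1+\alpha)}\tau \int_V \operatorname{Sc} \cdot u^2\, d\mu$ built into $\mathcal{W}^\pm_\alpha$.
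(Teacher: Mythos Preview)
Your proposal is correct and follows essentially the same route as the paper's proof: apply the expansion from Lemma~\ref{expansion_W_a} at an arbitrary point, use the curvature decomposition \eqref{orthogonal_decomposition} together with $|\operatorname{Rc}|^2 \geq \operatorname{Sc}^2/n$ to reduce to a quadratic inequality in $\operatorname{Sc}^2$, verify that the relevant coefficients are strictly negative at $\alpha=1$ and hence on a neighborhood by continuity, then back-substitute to kill $\operatorname{Rc}$ and $\operatorname{Rm}$. The only cosmetic difference is that you keep the Weyl term $|W|^2$ explicitly via the \emph{equality} in \eqref{orthogonal_decomposition} and conclude $W(p)=0$ directly, whereas the paper drops $|W|^2$ to get an inequality and then returns to the original $|\operatorname{Rm}|^2$ expression \eqref{goback2} at the end; the two are equivalent and your limiting value $-\tfrac{2}{3n^2(n-1)}$ of the bracketed coefficient agrees with the paper's computation.
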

	\begin{proof}
		For any $p \in V$, we choose $\xi_{\pm} \in \mathscr{B}_p(V)$,i.e. we take
		$
		\textbf{a} = \frac{2(\alpha+1)}{3\chi} \operatorname{Rc}(p)
		$
		such that $\int_V (u_-\xi_-)^{\alpha+1} d\mu \equiv 1$ if $0 < \alpha < 1$,
		and $\int_V (u_+\xi_+)^{2\alpha} d\mu \equiv 1$ if $1 < \alpha \leq \frac{n}{n-2}$.
		
		By Lemma \ref{expansion_W-}, we have for $0<\alpha<1$ or ($1<\alpha\le \frac{n}{n-2}$ and $\alpha<\frac{n+6}{n+2}$):
		\begin{equation*}
			\begin{aligned}
				&\quad\mathcal{W}^{\pm}_{\alpha}(V,g, u_{\pm}(x,t)\xi_{\pm}(x,t),\tau_{\pm}(t))\\
				&=\operatorname{M}^{\pm}_{\alpha} \left\{32\zeta_2\left( \frac{4((n+5) \alpha-n-3)(\alpha-1)}{9\chi}|\operatorname{Rc}|^2(p)-\frac{1}{6}|\operatorname{Rm}|^2(p)\right)t^2+j^{\pm}(\alpha,n)\operatorname{Sc}^2(p)t^2+o(t^2)\right\},
			\end{aligned}
		\end{equation*}
		with
		$$j^{\pm}(\alpha,n)\to 0 \text{ as } \alpha\to 1.$$
		Here, $\operatorname{M}^{-}_{\alpha}=\mathfrak{m}^{1-\frac{\Gamma_{\alpha}}{2\Gamma_{\alpha}+1}}\Sigma^-_{\alpha}$ and  $\operatorname{M}^{+}_{\alpha}=\mathfrak{m}^{1-\frac{\Theta_{\alpha}}{1-\Theta_{\alpha}}}\Sigma^+_{\alpha}$.
		
		By the curvature decomposition \eqref{orthogonal_decomposition}, we have
		\begin{equation}\label{thm5.2_i}
			|\operatorname{Rm}|^2(p) \ge \frac{4}{n-2}|\operatorname{Rc}|^2(p) - \frac{2}{(n-1)(n-2)} \operatorname{Sc}^2(p).
		\end{equation}
		
		When $0<\alpha<1$ or ($1<\alpha\le \frac{n}{n-2}$  and $\alpha<\frac{n+6}{n+2}$), and if $\left( \frac{4((n+5) \alpha-n-3)(\alpha-1)}{9\chi} - \frac{2}{3(n-2)}\right) < 0$,
		by Lemma \ref{expansion_W_a} and \eqref{mu_constant_comparison_iso_2_6},
		we get for $t$ small enough:
		\begin{align}
			& -o(t^2)\le \mathcal{W}^{\pm}_{\alpha}(V,g, u_{\pm}(x,t)\xi_{\pm}(x,t),\tau_{\pm}(t)) \nonumber \\
			=& \operatorname{M}^{\pm}_{\alpha} \left\{32\zeta_2\left( \frac{4((n+5) \alpha-n-3)(\alpha-1)}{9\chi}|\operatorname{Rc}|^2(p)-\frac{1}{6}|\operatorname{Rm}|^2(p)\right)t^2+j^{\pm}(\alpha,n)\operatorname{Sc}^2(p)t^2+o(t^2)\right\} \label{goback2} \\
			\le & \operatorname{M}^{\pm}_{\alpha} \left\{ 32\zeta_2\left( \frac{4((n+5) \alpha-n-3)(\alpha-1)}{9\chi} - \frac{2}{3(n-2)}\right) |\operatorname{Rc}|^2(p) \right.\nonumber \\
			& \quad \left.+ \left(\frac{32\zeta_2}{3(n-1)(n-2)} + j^{\pm}(\alpha,n)\right) \operatorname{Sc}^2(p)\right\}t^2 +o(t^2)\label{goback} \\
			\le & 32\zeta_2 \operatorname{M}^{\pm}_{\alpha} \left\{\frac{1}{n}\left( \frac{4((n+5) \alpha-n-3)(\alpha-1)}{9\chi} - \frac{2}{3(n-2)}\right) \right. \nonumber \\
			& \quad \left. + \left(\frac{1}{3(n-1)(n-2)} + \frac{j^{\pm}(\alpha,n)}{32\zeta_2}\right)\right\} \operatorname{Sc}^2(p)t^2+o(t^2), \label{goback3}
		\end{align}
		with $\zeta_2>0$, where we have used \eqref{thm5.2_i} and $|\operatorname{Rc}|^2(p)\ge \frac{1}{n}\operatorname{Sc}^2(p)$.
		Hence, when 
		\begin{equation}\label{co_1}
			\text{ $0<\alpha<1$ or ($1<\alpha\le \frac{n}{n-2}$  and $\alpha<\frac{n+6}{n+2}$)},
		\end{equation}
		and
		\begin{equation}\label{co_2}
			\left( \frac{4((n+5) \alpha-n-3)(\alpha-1)}{9\chi} - \frac{2}{3(n-2)}\right) < 0,
		\end{equation}
		and
		\begin{equation}\label{co_3}
			\operatorname{F}(\alpha,n):=\left( \frac{4((n+5) \alpha-n-3)(\alpha-1)}{9\chi} - \frac{2}{3(n-2)}\right) + n\left(\frac{1}{3(n-1)(n-2)} + \frac{j^{\pm}(\alpha,n)}{32\zeta_2}\right) < 0,
		\end{equation}
		we conclude from \eqref{goback3} that $\operatorname{Sc}(p) = 0$. 
		Consequently, $\operatorname{Rc}(p) = 0$ by \eqref{goback}, and $\operatorname{Rm}(p) = 0$ by \eqref{goback2}.  Since $j^{\pm}(\alpha,n)\to 0$, $\zeta_2\to \frac{1}{16n}$  and $\chi\to 4$ as $\alpha \to 1$,  there exist $\kappa^{\pm}$ such that when $0<|\alpha-1|<\kappa^{\pm}$, there hold \eqref{co_1}, \eqref{co_2} and \eqref{co_3}.
	\end{proof}
	
	\begin{rem}\label{exact_number_equation}
		From the proof of Theorem~\ref{mu_constant_rigidity}, we observe that $\kappa^{\pm}$ depends on the intersections of the ranges given in \eqref{co_1}, \eqref{co_2}, and \eqref{co_3}. 
		Through direct calculation using Maple, we see that $\operatorname{F}(\alpha,n)=0$ is a complicated seventh-degree polynomial equation.
	\end{rem}

	Now we give the proof of Theorem \ref{Yamabe_rigidity}.
	\begin{proof}[Proof of Theorem \ref{Yamabe_rigidity}:]
		Theorem  \ref{Yamabe_rigidity} follows directly from Theorem \ref{mu_constant_rigidity} and Theorem \ref{connection}.
	\end{proof}

	\section{the proof of Theorem \ref{rigidity_iso_profile}}
	
	Before presenting the proof of Theorem \ref{mu_rigidity}, we need the following lemma.
	\begin{lem}\label{section}
		Let $(M^n,g)$ be an $n$-dimensional manifold and $p\in \mathring{M^n}$. 	Let $u_{\pm}$ be the functions defined in \eqref{def_u-} and \eqref{def_u+}.
		
		\noindent\textbf{(a) For the case $0<\alpha<1$:}
		\begin{enumerate}
			\item 
			When 
			\begin{equation}\label{6_i_1}
				0<\alpha<1,
			\end{equation}
			if there exist a neighborhood $V_p$ of $p$,  $\xi_-(x,t)\in \mathcal{B}_p(V_p)$ and $\bar{\xi}_-(x,t)\in \mathcal{B}_{p_K}(M^n_K)$ for some point $p_K\in M^n_K$ satisfying
			\begin{equation}\label{comparison_l1}
				\mathcal{L}^-_{\alpha}(V_p,g,u_-\xi_-,t )\ge  \mathcal{L}^-_{\alpha}(M^n_K,g_K, \bar{u}_-\bar{\xi}_-,t)-o(t),
			\end{equation}
			for all $0<t<\tau_0$ and some $\tau_0>0$,
			then the scalar curvature at $p$ satisfies
			$$
			\operatorname{Sc}(p)\le n(n-1)K.
			$$	
			Here, $ M^n_K$ 
			is the space form with constant sectional curvature $K$.
			\item   When $\alpha$ satisfies the more restrictive range \eqref{6_i_3},
			if we assume  that 
			\begin{equation}\label{comparison_ll}
				\mathcal{L}^-_{\alpha}(V_p,g,u_-\xi_-,t )\ge  \mathcal{L}^-_{\alpha}(M^n_K,g_K, \bar{u}_-\bar{\xi}_-,t)-o(t^2),
			\end{equation}
			for all $0<t<\tau_0$ and some $\tau_0>0$,
			and if additionally
			\begin{equation}\label{R_point_compare_6}
				\operatorname{Sc}(p)    \ge n(n-1)K, \quad	\Delta \operatorname{Sc}(p) \ge 0, 
			\end{equation}
			then the sectional curvature at $p$ satisfies
			$$\operatorname{Sec}(p)=K.$$
		\end{enumerate}

		\noindent\textbf{(b) For the case $1<\alpha\le \frac{n}{n-2}$:}
		\begin{enumerate}
			\item 	
			When $\alpha$ satisfies the range \eqref{6_i_2},  
			if there exist a neighborhood $V_p$ of $p$,  $\xi_+(x,t)\in \mathcal{B}_p^(V_p)$ and $\bar{\xi}_+(x,t)\in \mathcal{B}_{p_K}(M^n_K)$ for some point $p_K\in M^n_K$ satisfying
			\begin{equation}\label{comparison_21}
				\mathcal{L}^+_{\alpha}(V_p,g,u_+\xi_+,t )\ge  \mathcal{L}^+_{\alpha}(M^n_K,g_K, \bar{u}_+\bar{\xi}_+,t)-o(t),
			\end{equation}
			for all $0<t<\tau_0$ and some $\tau_0>0$,
			then the scalar curvature at $p$ satisfies
			$$
			\operatorname{Sc}(p)\le n(n-1)K.
			$$	
			Here, $ M^n_K$ 
			is the space form with constant sectional curvature $K$.
			\item  When $\alpha$ satisfies the more restrictive range \eqref{6_i_4},
			if we assume  that \begin{equation}\label{comparison_2}
				\mathcal{L}^+_{\alpha}(V_p,g,u_+\xi_+,t )\ge  \mathcal{L}^+_{\alpha}(M^n_K,g_K, \bar{u}_+\bar{\xi}_+,t)-o(t^2),
			\end{equation}
			for all $0<t<\tau_0$ and some $\tau_0>0$, and if additionally
			\begin{equation}\label{R_point_compare_66}
				\operatorname{Sc}(p)    \ge  n(n-1)K, \quad	\Delta \operatorname{Sc}(p) \ge 0, 
			\end{equation}
			then the sectional curvature at $p$ satisfies
			$$\operatorname{Sec}(p)= n(n-1)K.$$
		\end{enumerate}
	\end{lem}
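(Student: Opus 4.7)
The plan is to expand both sides of the comparison inequalities \eqref{comparison_l1}--\eqref{comparison_2} via the power series formulas of Theorem \ref{expansion_L_a}, and then read off pointwise consequences by matching coefficients of $t$ and $t^2$. A crucial preliminary observation is that $\tau_{\pm}(t)$ is defined through the universal constants $A_0,D_0(\cdot)$ which depend only on $n$ and $\alpha$ (not on the metric), so both sides of each inequality are evaluated at the same $\tau$-value, making coefficient-by-coefficient comparison legitimate.

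For parts (a)(1) and (b)(1), applying Theorem \ref{expansion_L_a} on $(V_p,g)$ and on $(M^n_K,g_K)$, and using $\operatorname{Sc}(p_K)=n(n-1)K$, the leading $O(t)$ contribution of the comparison reduces, after canceling the positive factor $\mathfrak{m}^{1-\Gamma_\alpha/(2\Gamma_\alpha+1)}\Sigma^-_\alpha$ (respectively with $\Theta_\alpha$ in place of $\Gamma_\alpha$ for the $+$ case), to $\zeta_1 \operatorname{Sc}(p)\geq \zeta_1\cdot n(n-1)K$. Since $\zeta_1<0$ on the ranges \eqref{6_i_1} and \eqref{6_i_2} (as already verified in the proof of Theorem \ref{mu_rigidity}), this yields $\operatorname{Sc}(p)\leq n(n-1)K$.

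For parts (a)(2) and (b)(2), combining (1) with the hypothesis $\operatorname{Sc}(p)\geq n(n-1)K$ forces $\operatorname{Sc}(p)=n(n-1)K$, so the $O(t)$ terms cancel and I turn to the $O(t^2)$ terms. The hypothesis $\xi_\pm\in\mathscr{B}_p(V_p)$ (and $\bar\xi_\pm\in\mathscr{B}_{p_K}(M^n_K)$) fixes $\textbf{a}=\frac{2(\alpha+1)}{3\chi}\operatorname{Rc}(p)$ on $V_p$ and analogously on $M^n_K$, killing the perfect-square term in $\operatorname{II}$; moreover $\operatorname{tr}(\textbf{a})=\frac{2(\alpha+1)}{3\chi}\operatorname{Sc}(p)$ depends only on $\operatorname{Sc}(p)$. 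Since $\operatorname{III}^\pm$ is a polynomial in $\operatorname{Sc}(p)$ and $\operatorname{tr}(\textbf{a})$ alone by \eqref{III_-}--\eqref{III_+}, the $\operatorname{III}^\pm$-contributions on the two sides coincide. Using additionally $\Delta\operatorname{Sc}(p_K)=0$ on the space form and dividing by $32\zeta_2>0$, the $O(t^2)$ comparison simplifies to $\operatorname{II}_{V_p}-\operatorname{II}_{M^n_K}\geq \Delta\operatorname{Sc}(p)\geq 0$, the last inequality being hypothesis \eqref{R_point_compare_6} or \eqref{R_point_compare_66}.

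Finally, I invoke the curvature decomposition \eqref{orthogonal_decomposition}. Inserting $\operatorname{Sc}(p)=n(n-1)K$, and using that on $M^n_K$ the Weyl tensor vanishes and $|\operatorname{Rc}|^2=n(n-1)^2K^2$, a direct substitution yields
\[
\operatorname{II}_{V_p}-\operatorname{II}_{M^n_K}=\left[\frac{4((n+5)\alpha-n-3)(\alpha-1)}{9\chi}-\frac{2}{3(n-2)}\right]\bigl(|\operatorname{Rc}|^2(p)-n(n-1)^2K^2\bigr)-\frac{1}{6}|W|^2(p).
\]
The ranges \eqref{6_i_3} and \eqref{6_i_4} were designed in Theorem \ref{mu_rigidity} precisely so that the bracketed coefficient is strictly negative, while Cauchy--Schwarz gives $|\operatorname{Rc}|^2(p)\geq \operatorname{Sc}^2(p)/n=n(n-1)^2K^2$. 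Thus the displayed quantity is $\leq 0$; combined with the previous $\geq 0$, both terms must vanish, forcing $W(p)=0$ and $\operatorname{Rc}(p)=(n-1)K\,g(p)$ (the equality case of Cauchy--Schwarz), so that the curvature decomposition yields $\operatorname{Sec}(p)=K$. The main subtlety in executing this plan is verifying that the $\operatorname{III}^\pm$-contributions really do cancel across $(V_p,g)$ and $(M^n_K,g_K)$ under the $\mathscr{B}_p$-normalization; beyond that, the argument parallels the proof of Theorem \ref{mu_rigidity} and invokes no identities outside those already established.
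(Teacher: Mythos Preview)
Your proposal is correct and follows essentially the same approach as the paper: expand both sides via Theorem~\ref{expansion_L_a}, use $\zeta_1<0$ to compare the $O(t)$ terms, then use $\zeta_2>0$, $\Delta\operatorname{Sc}(p)\ge 0$, and the equality $\operatorname{Sc}(p)=n(n-1)K$ (which makes the $\operatorname{III}^\pm$-terms match since under the $\mathscr{B}_p$-choice they reduce to $C_1'\operatorname{Sc}^2$) to compare the $O(t^2)$ terms. The only cosmetic difference is that the paper argues in two passes---first bounding $|\operatorname{Rc}|^2(p)\le |\operatorname{Rc}_K|^2$ via the inequality form of the decomposition, then back-substituting to get $|\operatorname{Rm}|^2(p)\le |\operatorname{Rm}_K|^2$---whereas you keep the Weyl term explicit and read off $W(p)=0$ and $|\operatorname{Rc}|^2(p)=n(n-1)^2K^2$ simultaneously; both routes yield $\operatorname{Sec}(p)=K$ by the same equality cases.
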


	\begin{proof} 
		\textbf{  First, we prove (a)(1) and (b)(1).}
		By (\ref{expansion_-}) \eqref{expansion_+_1} and  (\ref{comparison_l1}) \eqref{comparison_21}, when $\alpha$ satisfies \eqref{6_i_1} or \eqref{6_i_2}, we conclude that 
		\begin{equation}\label{q_1} 
			\begin{aligned}
				\zeta_1 \operatorname{Sc}(p)t 
				\ge \zeta_1 \operatorname{Sc}_K(p)t-o(t)
			\end{aligned}
		\end{equation}
		for all $t\le \tau_0$, 
		where $\operatorname{Sc}_K$ denotes the scalar curvature of the $n$-dimensional space form of constant sectional curvature $K$. Since $\zeta_1<0$ when $\alpha$ satisfies \eqref{6_i_1} or \eqref{6_i_2}, we have $\operatorname{Sc}(p)\le \operatorname{Sc}_K(p_K)=n(n-1)K$.
		
		\textbf{ Second, we prove (a)(2) and (b)(2).}
		By (\ref{expansion_-}) \eqref{expansion_+_2} and  (\ref{comparison_ll}) \eqref{comparison_2}, when $\alpha$ satisfies \eqref{6_i_3} or \eqref{6_i_4}, we conclude that 
		\begin{equation}\label{q_2}
			\begin{aligned}
				&\zeta_1 \operatorname{Sc}(p)t- 32\zeta_2\Delta \operatorname{Sc}(p)t^2+32\zeta_2\left\{ \frac{4((n+5) \alpha-n-3)(\alpha-1)}{9\chi}|\operatorname{Rc}|^2(p)-\frac{1}{6}|\operatorname{Rm}|^2(p)\right\} t^2+C_1'\operatorname{Sc}^2(p) \\
				\ge& \zeta_1 \operatorname{Sc}_K(p)t+32\zeta_2\left\{ \frac{4((n+5) \alpha-n-3)(\alpha-1)}{9\chi}|\operatorname{Rc_K}|^2(p)-\frac{1}{6}|\operatorname{Rm}_K|^2(p)\right\} t^2+C_1'\operatorname{Sc}_K^2(p)-o(t^2)
			\end{aligned}
		\end{equation}
		where  $\operatorname{Rc_K}$ and $\operatorname{Rm}_K$ denote the Ricci curvature and curvature tensor of the $n$-dimensional space form of constant sectional curvature $K$. 
		
		If we have $\operatorname{Sc}(p)\ge n(n-1)K$, then $\operatorname{Sc}(p)=n(n-1)K$. Since $\Delta\operatorname{Sc}(p)\ge 0$ and $\zeta_2>0$, 
		we conclude from (\ref{q_2}) that 
		\begin{equation}\label{6_ii_1}
			\begin{aligned}
				&\frac{4((n+5) \alpha-n-3)(\alpha-1)}{9\chi}|\operatorname{Rc}|^2(p)-\frac{1}{6}|\operatorname{Rm}|^2(p)\\
				\ge& \frac{4((n+5) \alpha-n-3)(\alpha-1)}{9\chi}|\operatorname{Rc_K}|^2(p)-\frac{1}{6}|\operatorname{Rm}_K|^2(p). 
			\end{aligned}
		\end{equation}
		
		From the curvature orthogonal decomposition \eqref{orthogonal_decomposition}, 
		we have
		\begin{equation*}
			\begin{aligned}
				&\frac{4((n+5) \alpha-n-3)(\alpha-1)}{9\chi}|\operatorname{Rc}|^2(p)-\frac{1}{6}\left(\frac{4}{n-2}|\operatorname{Rc}|^2(p)-\frac{2}{(n-1)(n-2)} \operatorname{Sc}^2(p)\right)\\
				\ge& \frac{4((n+5) \alpha-n-3)(\alpha-1)}{9\chi}|\operatorname{Rc_K}|^2(p)-\frac{1}{6}\left(\frac{4}{n-2}|\operatorname{Rc_K}|^2(p)-\frac{2}{(n-1)(n-2)} \operatorname{Sc_K}^2(p)\right)
			\end{aligned}
		\end{equation*}
		If\begin{equation}
			\left( \frac{4((n+5) \alpha-n-3)(\alpha-1)}{9\chi} - \frac{2}{3(n-2)}\right) <0 ,
		\end{equation}
		we conclude that
		$$
		|\operatorname{Rc}|^2(p)\le |\operatorname{Rc}_K|^2(p)=\frac{1}{n}\operatorname{Sc_K}^2(p)=\frac{1}{n}\operatorname{Sc}^2(p),
		$$
		and hence
		$$
		|\operatorname{Rc}|^2(p)=|\operatorname{Rc}_K|^2(p)=\frac{1}{n}\operatorname{Sc}^2(p).
		$$
		and by \eqref{6_ii_1}
		\begin{equation}\label{rm_iii}		|\operatorname{Rm}|^2(p)\le|\operatorname{Rm}_K|^2(p)=2n(n-1)K^2.
		\end{equation}
		Again curvature orthogonal decomposition \eqref{orthogonal_decomposition}, 
		we have
		\begin{equation}\label{q_3}
			|\mathrm{\operatorname{Rm}}|^2(p)\ge \left|\frac{\operatorname{Sc}}{2 n(n-1)} g \odot g\right|^2(p)=2n(n-1)K^2
		\end{equation}
		with equality if and only if $g$ has constant sectional curvature. Then we get $\operatorname{Sec}(p)=K$ by \eqref{rm_iii} and \eqref{q_3}.

	\end{proof}

	Now we give the proof of Theorem \ref{rigidity_iso_profile}.
	
	\begin{proof}[Proof of Theorem \ref{rigidity_iso_profile}]
		For any $p\in V$,
		we take 
		$\eta\in \mathscr{B}_p(V)$, $u=u_-\eta$ when $0<\alpha<1$ or $u=u_+\eta$ when $1<\alpha\le \frac{n}{n-2}$, where $u_{\pm}$ are the functions defined in \eqref{def_u-} and \eqref{def_u+}.
		Next, we apply the spherical symmetrization (Schwarz symmetrization) method.
		
		We can choose $r_0$ to be small enough so that $\operatorname{supp}\{\eta\}\subset B(p,r_0)\subset\subset V$ and therefore there exists $B^K(p_K,r_t)\subset M^n_K$ such that
		$
		\operatorname{Vol}_{g}(\{x \in M^n\mid u(x,t) >0\})=\operatorname{Vol}\left(B^K(p_K,r_t)\right)\le \beta_0.
		$
		Let
		$
		\bar{u}(\cdot,t)
		$
		be a non-negative rotational symmetric function such that
		\begin{equation}\label{vol_equa}
			\operatorname{Vol}\left(\left\{y \in  M^n_K\mid \bar{u}(y,t) \geq s\right\}\right)= \operatorname{Vol}\left(\{x \in V\mid u(x,t) \geq s\}\right)
		\end{equation}
		for all $s>0$ and $\bar{u}(y,t)=0$ when $\bar{d}(p_K,y)\geq r_t$. It is clear that  $\bar{u}(r,t) :=\bar{u}(y,t)$ is non-increasing in $r=\bar{d}(p_K,y)$ for any $t>0$. We define
		$
		\mathcal{M}_s :=\{x \in V\mid u(x,t) \geq s\}, \mathcal{M}_s^{\prime} :=\left\{y \in  M^n_K\mid \bar{u}(y,t) \geq s\right\}
		$
		and $\Gamma_s := \partial \mathcal{M}_s$, $\Gamma_s^{\prime} :=\partial \mathcal{M}_s^{\prime}$. By the co-area formula and (\ref{vol_equa}), we have
		\begin{equation}\label{level_set_equa}
			\int_{\Gamma_s} \frac{1}{|\nabla u(\cdot,t)|} d \sigma=\int_{\Gamma_s^{\prime}} \frac{1}{|\bar{\nabla} \bar{u}(\cdot,t)|} d \sigma_K,
		\end{equation}
		\begin{equation}\label{1.1_1}
			\int_{V} u(\cdot,t)^q d\mu=	\int_{M^n_K} \bar{u}(\cdot,t)^q d\mu_K,
		\end{equation}
		for any $q>0$.    
		
		Since $\mathcal{M}_s^{\prime}$ is a round ball in space form and by (\ref{comparison_iso}), we have
		\begin{equation}\label{key}
			\begin{aligned}
				\operatorname{Area}\left(\Gamma_s^{\prime}\right)  =\operatorname{I}(M^n_K,\operatorname{Vol}\left(\mathcal{M}_s^{\prime}\right)) \le \operatorname{I}(V,\operatorname{Vol}\left(\mathcal{M}_s\right)) \leq \operatorname{Area}\left(\Gamma_s\right).
			\end{aligned}
		\end{equation}
		and hence
		\begin{equation*}
			\begin{aligned}
				& \int_{\Gamma_s^{\prime}}|\bar{\nabla} \bar{u}(\cdot,t)| d \sigma_K \cdot \int_{\Gamma_s^{\prime}} \frac{1}{|\bar{\nabla} \bar{u}(\cdot,t)|} d \sigma_K \\
				=& \left(\text { Area }\left(\Gamma_s^{\prime}\right)\right)^2\leq\left(\text { Area }_{\tilde{g}}\left(\Gamma_s\right)\right)^2 \\
				\leq&  \int_{\Gamma_s}|\nabla u(\cdot,t)| d \sigma\cdot \int_{\Gamma_s} \frac{1}{|\nabla u(\cdot,t)|} d \sigma,
			\end{aligned}
		\end{equation*}
		where we used the H\"{o}lder inequality to obtain the last inequality. By this and (\ref{level_set_equa}), we have
		\begin{equation}\label{1.1_3}
			\int_{M^n_K}|\bar{\nabla} \bar{u}(\cdot,t)|^2  d \mu_K
			\leq \int_{V}|\nabla u(\cdot,t)|^2 d \mu.
		\end{equation}
		It follows that (\ref{1.1_1}) and (\ref{1.1_3}),
		we have
		\begin{equation}\label{1.1_8}
			\mathcal{L}^{\pm}_{\alpha}(V,g, u,t )\ge  \mathcal{L}^{\pm}_{\alpha}(M^n_K,g_K, \bar{u},t).
		\end{equation}
		
		Now we let $\alpha$  satisfy \eqref{6_i_3} or \eqref{6_i_4}.
		For the case $K= 0$, we have $\mathcal{L}^{\pm}_{\alpha}(V,g, u,t )\ge 0$ by Theorem \ref{connection} and the Gagliardo-Nirenberg
		inequality in Euclidean space. In this case, Theorem \ref{rigidity_iso_profile} follows from Theorem  \ref{mu_rigidity}. 
		
		Next, we consider the case $K\ne 0$.
		Taking $s=\bar{u}(r,t)$ in (\ref{level_set_equa}),  $\bar{u}$ is the solution to
		\begin{equation}\label{key_7}
			\int_{\Gamma_r} \frac{1}{|\nabla u(\cdot,t)|} d \sigma= \frac{\operatorname{Area_K}(\partial B^K(p_K,r))}{|\frac{d}{dr} \bar{u}(r,t)|},
		\end{equation}
		with  $\Gamma_r=\{x\in M \mid u(x,t)=\bar{u}(r,t) \}$. 
		Now we rescale the metrics as $\tilde{g}=t^{-1}g$ and  $\tilde{g}_K=t^{-1}g_K$. 
		Then (\ref{key_7})  becomes
		\begin{equation}\label{key_6}
			\int_{\tilde{\Gamma_r}} \frac{1}{|\tilde{\nabla}\tilde{u}(x,t)|} d \sigma_{\tilde{g}}= \frac{\operatorname{Area_{tK}}(\partial B^{tK}(p_K,r))}{|\frac{d}{dr} \tilde{u}_K(r,t)|},
		\end{equation}
		where $\tilde{u}(x,t)=t^{\frac{n}{2(\alpha+1)}}D_0(\alpha+1)^{-\frac{1}{\alpha+1}}u(\sqrt{t}x,t)$ and $\tilde{u}_K(r,t)=t^{\frac{n}{4\alpha}}D_0(2\alpha)^{-\frac{1}{2\alpha}}\bar{u}_K(\sqrt{t}r,t)$ if $0<\alpha<1$,  $\tilde{u}(x,t)=t^{\frac{n}{4\alpha}}u(\sqrt{t}x,t)$ and $\tilde{u}_K(r,t)=t^{\frac{n}{4\alpha}}\bar{u}_K(\sqrt{t}r,t)$ if $\alpha>1$, $\tilde{\Gamma_r}=\{x\in M \mid \tilde{u}(x,t)=\tilde{u}_K(r,t)\}$. Moreover, we can write $\tilde{u}(x,t)= \operatorname{H}(x)\tilde{\eta}(x,t)$ and $\tilde{\eta}^2$ can be written as $\tilde{\eta}^2=1+\frac{2(\alpha+1)}{3\chi} \operatorname{Rc}(p)_{ij}t\tilde{y^i}\tilde{y^j}
		+e_{ijk}t^{\frac{3}{2}}\tilde{y^i}\tilde{y^j}\tilde{y^k}+b_{ijkl}t^2\tilde{y^i}\tilde{y^j}\tilde{y^k}\tilde{y^l}+o(t^2d_{\tilde{g}}^4)+\beta_1 t+q_it^{\frac{3}{2}}\tilde{y^i}+d_{ij}t^2\tilde{y^i}\tilde{y^j}+o(td_{\tilde{g}}^2)t+\beta_2 t^2+o(t^2)
		$, here  $\{\tilde{x}^k\}^n_{k=1}$ be the normal geodesic coordinates centered at $p$ on $T_pM$ with respect to metric $\tilde{g}$.  Taking $t\to 0$ in (\ref{key_6}), we can get
		$\tilde{u}(r,0)=\operatorname{H}(r)$.
		It is straightforward from (\ref{key_6}) and the differentiability of $\tilde{\Gamma_r}$ and $\tilde{\eta}^2$ that $ \bar{u}(r,t)=u_{\pm}(r)\bar{\eta}(r,t)$ with rotational symmetric function $\bar{\eta}(x,t)$ can be written as $\bar{\eta}(x,t)^2=\sum\limits_{k=0}^2\bar{\phi}_k(x)t^k+o(t^2)$ around $(p_K,0)$ with $\bar{\phi}_2$ being continuous at $p_K$, both 4th derivatives of $\bar{\phi}_0$, and 2nd derivatives of $\bar{\phi}_1$ exist at $p_K$. 
		
		Since \text{  $\int_V (u_-\eta)^{\alpha+1}d\mu \equiv 1$ if $0<\alpha<1$ and $\int_V (u_+\eta)^{2\alpha}d\mu \equiv 1$ if  $1<\alpha\le \frac{n}{n-2}$}, 	
		by letting  $t\to 0$ in (\ref{1.1_1}), we get $\bar{\eta}^2(p_K,0)=1$. Also note that $\bar{u}(r,t) $ is non-increasing in $r$ for any $t>0$. Then $\bar{u}(y,t)$ achieves its maximum at $p_K$ for any $t$ and therefore $\bar{\nabla}\bar{u}(p_K,t)=0$.
		By (\ref{expansion_-}), (\ref{expansion_+_2}) and comparing the $O(t)$ terms of (\ref{1.1_8}), we get $\operatorname{Sc}(x)\le n(n-1)K$ for all $x\in V$ since $p$ is arbitrarily chosen. We get $\operatorname{Sc}(x)\equiv n(n-1)K$ on $V$ by \eqref{scalar_curvature_lowerbound}. Therefore,   $\Delta\operatorname{Sc}\equiv 0$
		on $V$.
		By the $O(t)$ terms of (\ref{1.1_1}) and (\ref{expansion_D1st}), we have
		$\frac{D_1(q)}{D_0(q)}=\frac{\bar{D}_1(q)}{\bar{D}_0(q)}$ for any $q$. Hence, we can get $\frac{\partial}{\partial t}\bar{\eta}^2(p_K,0)=\frac{\partial}{\partial t}\eta^2(p,0)=\beta_1$ and $\operatorname{tr}(\bar{\nabla}\bar{\nabla} \bar{\eta}^2)(p_K,0)=\operatorname{tr}(\nabla\nabla \eta^2)(p,0)=\frac{4(\alpha+1)}{3\chi}\operatorname{Sc}(p)=\frac{4(\alpha+1)}{3\chi}n(n-1)K$.  Hence, $\bar{\nabla}\bar{\nabla} \bar{\eta}^2(p_K,0)=\frac{4(\alpha+1)}{3\chi}(n-1)K\delta_{ij}$ since $\bar{\eta}$ is rotational symmetric. Then we get $\bar{u}\in \mathscr{B}_p(M^n_K)$.
		So, Theorem \ref{rigidity_iso_profile}  follows from Lemma \ref{section}.
		
	\end{proof}

\end{document}